\newtheorem{lemma}{Lemma}[section]
\newtheorem{proposition}[lemma]{Proposition}
\newtheorem{theorem}[lemma]{Theorem}
\newtheorem{remark}[lemma]{Remark}
\newcommand{\e}{\mathbb{E}}
\newcommand{\E}{\mathbb{E}}
\renewcommand{\P}{\mathbb{P}}
\newcommand{\R}{\mathbb{R}}
\newcommand{\1}{{\mathbf 1}}
\def\ti{{\tau_{i}}}
\def\tip{{\tau_{i+1}}}
\def \Esp#1{{\mathbb E}\left[#1\right]}
\def\be{\begin{align}}
\def\ee{\end{align}}
\def\b*{\begin{eqnarray*}}
\def\e*{\end{eqnarray*}}
\def\vp{\varphi}
\def\be{\begin{eqnarray}}
\def\ee{\end{eqnarray}}
\def\beq{\begin{equation}}
\def\eeq{\end{equation}}
\def\b*{\begin{eqnarray*}}
\def\e*{\end{eqnarray*}}
\def\bi{\begin{itemize}}
\def\ei{\end{itemize}}
\def \1{{\bf 1}}
\def\vp{\varphi}
\def\eps{\varepsilon}
\def\={\;=\;}
\def\x{\times}
\def\Esp#1{\mathbb{E}\left[#1\right]}
\def\Pro#1{\mathbb{P}\left[{#1}\right]}
\def \proof{{\noindent \bf Proof. }}
\def \ep{\hbox{ }\hfill$\Box$}
 \def\reff#1{{\rm(\ref{#1})}}
\def\Pas{\mathbb{P}-\mbox{a.s.}}
 \def\vs#1{\vspace{#1mm}}
\def\ti{{t_i}}
\def\tip{ {t_{i+1}} }
\def \E{\mathbb{E}}
\def \F{\mathbb{F}}
\def \P{\mathbb{P}}
\def \Q{\mathbb{Q}}
\def \R{\mathbb{R}}
\def\Ac{{\cal A}}
\def\Fc{{\cal F}}
\def\Gc{{\cal G}}
\def\Pc{{\cal P}}
\def\Sc{{\cal S}}
\makeatletter \@addtoreset{equation}{section}
  \def\ti{t_{i}^{n}}
\def\tip{t_{i+1}^{n}}
\def\Yn{Y^{n }}
\def\deltan{\delta^{n }}
\def\Vn{V^{n }}
\def\Sn{X^{n }}
\def\vrm{{\rm v}}
\def\vrmg{{\rm v}_{\bar \gamma}}
\def\Gcg{\Gc_{\bar \gamma}}
\def\vrmgbed{\bar {\rm v}^{\epsilon,K,\delta}_{\bar \gamma}}
\begin{document}

\title{Hedging of covered options with linear market impact and gamma constraint}

\author{B. Bouchard\thanks{Paris-Dauphine University, PSL Research University, CEREMADE, UMR 7534, 75775 Paris cedex 16, France.} \thanks{Research supported by ANR Liquirisk}, G. Loeper\thanks{Monash University, School of Mathematical Sciences, Victoria 3800
Australia.} and Y. Zou\footnotemark[1]}
\maketitle

\begin{abstract}
Within a financial model with  linear price impact, we study the problem of hedging a covered European option under gamma constraint. Using stochastic target and  {partial differential equation} smoothing techniques, we prove that the super-replication price is  {the}  viscosity solution of a fully non-linear parabolic equation.  As a by-product, we show how $\eps$-optimal strategies can be constructed.  {Finally, a numerical resolution scheme is proposed.}
\end{abstract}
\small
\noindent \emph{Keywords:} Hedging, Price impact, Stochastic target.

\vspace{1em}

\noindent \emph{AMS 2010 Subject Classification:}
91G20; 
93E20; 
49L20 
\section*{Introduction}   

Inspired by \cite{AbergelLoeper,loeper2013option},  {the authors in} \cite{bouchard2015almost} considered a financial market with permanent price impact, in which the impact function behaves as a linear function (around the origin) in the number of bought stocks.  This class of models is dedicated to the pricing and hedging of derivatives in situations  where the notional of the product hedged is such that the delta-hedging is non-negligible compared to the average daily volume traded on the underlying asset. Hence, the delta-hedging strategy  will have an impact on the price dynamics, and  will also incur liquidity costs. The linear impact models studied in \cite{AbergelLoeper,bouchard2015almost,loeper2013option}  incorporate both the effects into the pricing and hedging of the derivative, while maintaining the completeness of the market (up to a certain extent) and eventually leading to exact replication strategies.
As in perfect market models, this approach can provide an approximation of the real market conditions that can be used by practitioners to help them in the design of a suitable hedge in a systematic way,  without having to rely on an ad hoc risk criterion.

In  \cite{bouchard2015almost}, the authors considered the hedging of a cash-settled European option: at inception the option seller has to build the initial delta-hedge, and conversely at maturity the hedge must be liquidated to settle the final claim in cash.
It is shown therein that the price function of the optimal super-replicating strategy no longer solves a linear parabolic equation, as in the classical case, rather a quasi-linear one. Moreover, the hedging strategy consists in following a modified delta-hedging rule where the delta is computed at the  ``unperturbed'' value of the underlying, i.e.,~the one the underlying would have if the trader's position were liquidated immediately.

The approach and the results obtained in \cite{bouchard2015almost} thus differ substantially from \cite{AbergelLoeper,loeper2013option}. Indeed, while in \cite{AbergelLoeper,loeper2013option}   the impact model considered is the same, the control problem is different in the sense that it is applied to the hedging of \emph{covered options}. This refers to situations where the buyer of the option delivers at inception the required initial delta position, and accepts a mix of stocks (at their current market price) and cash as payment of the final claim, which eliminates the cost incurred by the initial and final hedge. Quite surprisingly, this is not a genuine  approximation of the problem studied in  \cite{bouchard2015almost}.  The question of the initial and final hedge is fundamental, to the point that the structure of the pricing question is completely different: in  \cite{bouchard2015almost} the equation is quasi-linear, while it is fully non-linear in  \cite{AbergelLoeper,loeper2013option}.

As opposed to \cite{bouchard2015almost}, authors in \cite{AbergelLoeper,loeper2013option} use a verification argument to build an exact replication strategy. Due to the special form of the non-linearity, the equation is ill-posed when the solution does not satisfy a gamma-type constraint. The aim of the current paper is to provide a direct characterization via stochastic target techniques, and to incorporate right from the beginning a gamma constraint on the hedging strategy.

 The super-solution property can be proved by (essentially) following  the arguments of \cite{cheridito2005multi}. The sub-solution characterization is much more difficult to obtain. Actually, we could not prove the required geometric dynamic programming principle, unlike \cite{cheridito2005multi}, because of the strong interaction between the hedging strategy and the underlying price process due to the market impact.  Instead,  we use the smoothing technique  developed in \cite{bouchard2013stochastic}. We construct a sequence of smooth super-solutions   which, by a verification argument, provide upper-bounds for  the super-hedging price. As they converge to a solution of the targeted pricing equation,  a comparison principle argument implies that their limit is the super-hedging price. As a by-product, this construction provides explicit $\eps$-optimal hedging strategies.  We also provide a comparison principle and a numerical resolution scheme. For simplicity, we first consider a model  that has only permanent price impact, we explain in Section \ref{sec: resilience} why adding a resilience effect does not affect our analysis. Note that this is because the resilience effect we consider here has no quadratic variation, as opposed to \cite{AbergelLoeper}, in which the resilience can destroy the parabolicity of the equation, and renders the exact replication non optimal.

We close this introduction by pointing out some related references. \cite{CetinJarrowProtter}  incorporates liquidity costs but no price impact, the price curve is not affected by the trading strategy.  It can be modified by adding restrictions on admissible strategies as in \cite{CetinSonerTouzi} and  \cite{SonTouzDyn}. This  leads to  a modified pricing equation, which exhibits a quadratic term in the second order derivative of the solution, and renders the pricing equation fully non-linear, and even not unconditionally parabolic. Other articles  focus  on the derivation of the price dynamics through clearing condition, see e.g.,~\cite{Frey}, \cite{Sircar}, \cite{Schon} in which the supply and demand curves   arise from ``reference'' and ``program'' traders (i.e., option hedgers); leading to a modified price dynamics, but without taking into account the liquidity costs, see also \cite{Liu}.   Finally,    the series of papers \cite{SonTouz}, \cite{cheridito2005multi}, \cite{SonTouzDyn} addresses the liquidity issue indirectly by imposing  bounds on the ``gamma'' of admissible trading strategies, no liquidity cost or price impact are modeled explicitly.

\vspace{5mm}

\noindent{\bf General notations.}
Throughout this paper,  $\Omega$ is the canonical space of continuous functions on $\R_{+}$ starting at $0$, $\P$ is the Wiener measure, $W$ is the canonical process, and $\F=(\Fc_{t})_{t\ge 0}$ is {the augmentation of its} raw filtration $\F^{\circ}=(\Fc^{\circ}_{t})_{t\ge 0}$.  All random variables are defined on $(\Omega,\Fc_{\infty},\P)$. We denote by $|x|$   the Euclidean norm of $x\in \R^{n}$, the integer  $n\ge 1$ is given by the context. {Unless otherwise specified, inequalities involving random variables are taken in the $\Pas$ sense. We use the convention $x/0={\rm sign}(x)\times \infty$ with ${\rm sign}(0)= +$. }

\section{Model and hedging problem}\label{sec: dynamics}

This section is dedicated to the derivation of the dynamics and the description of the gamma constraint. We also explain in detail how the pricing equation can be obtained and state our main result.

\subsection{Impact rule and discrete time trading dynamics}\label{sec: impact rule and discrete time trading}

We consider the framework studied in \cite{bouchard2015almost}. Namely, the impact of a strategy on the price process is modeled by an impact function $f$:
 the price variation due to buying a (infinitesimal)   number $\delta\in \R$ of shares is $\delta f(x)$,  given that the price of the asset is $x$ before the trade. The cost of buying the additional  $\delta$ units is
$$
\delta x+\frac12 \delta^{2} f(x)=\delta \int_{0}^{\delta}\frac1\delta (x+\iota f(x) ) d\iota,
$$
in which
$$
 \int_{0}^{\delta}\frac1\delta (x+\iota f(x) ) d\iota
$$
can be interpreted as the average cost for each additional unit.

Between two trading instances $\tau_{1}, \tau_{2}$ with $\tau_{1}\le \tau_{2}$, the dynamics of the stock is given by the strong solution of the stochastic differential equation
$$
dX_{t}=\mu(X_{t})dt +\sigma(X_{t}) dW_{t}.
$$

Throughout this paper, we assume that
\begin{equation}\label{eq: H1}
\begin{array}{c}
\mbox{${f\in C^{2}_{b}}$   and $\inf f>0$,} \\
\mbox{ $(\mu,\sigma)$ is Lipschitz and bounded, {$ \inf\sigma>  0$}. }
  \end{array}
\end{equation}

As in \cite{bouchard2015almost}, the number of shares the  trader would like to hold is given by a continuous It\^{o} process  $Y$ of the form
\be\label{eq: def Y sans saut}
Y =Y_{0}+\int_{0}^{\cdot} b_{s} ds +\int_{0}^{\cdot}a_{s}dW_{s}.
\ee
We say\footnote{In \cite{bouchard2015almost}, $(a,b)$ is only required to be progressively measurable and essentially bounded. The additional restrictions imposed here will be necessary for our results in Section \ref{sec: weak form}.} that $(a,b)$ belongs to ${\Ac^{\circ}_{k}}$ if $(a,b)$ is  continuous, $\F$-adapted,
$$
a=a_{0}+\int_{0}^{\cdot}\beta_{s} ds +\int_{0}^{\cdot} \alpha_{s} dW_{s}
$$
where $(\alpha,\beta)$ is  continuous, $\F$-adapted,  and $\zeta:=(a,b,\alpha,\beta)$ is essentially bounded by $k$  and such that
\b*
\E\left[\sup\left\{|\zeta_{s'}-\zeta_{s}|,\; t\le s \le s'\le s+\delta\le T\right\}|\Fc_{t}^{\circ} \right]\le k\delta
\e*
for all $0\le \delta\le 1$ and $t\in [0,T-\delta]$.

We then define 
\b*
& {\Ac^{\circ}:=\cup_{k}\Ac^{\circ}_{k}}.&
\e*
To derive the continuous time dynamics, we first consider a discrete time setting and then pass to the limit. In the discrete time setting, the position is re-balanced only at times
$$
\ti:=iT/n,\;i=0,\ldots,n, \;n\ge 1.
$$
In other words, the trader keeps the position   $Y_{\ti}$ in stocks over each time interval $[\ti,\tip)$. Hence, his position in stocks at $t$ is
\be\label{eq: def Yn sans saut}
\Yn_{t}:=\sum_{i=0}^{n-1} Y_{\ti}\1_{\{\ti\le t<\tip\}}+Y_{T}\1_{\{t=T\}},
\ee
 and the number of  shares purchased at $\tip$ is
$$
\deltan_{\tip}:=  Y_{\tip}-Y_{\ti} .
$$
Given our impact rule, the corresponding dynamics for the stock price  process   is
\be\label{eq: dyna Sn}
\Sn  = X_{0} + \int_{0}^{\cdot}\mu(\Sn_{s})ds +\int_{0}^{\cdot} \sigma(\Sn_{s}) dW_{s} + \sum_{i=1}^{n}\1_{[\ti,T]} \deltan_{\ti} f(\Sn_{\ti-}),
\ee
in which $X_{0}$ is a constant.

The portfolio process is described as the sum $\Vn$ of the amount of cash held and the potential wealth $\Yn \Sn$ associated to the position in stocks:
\b*
V^{n}=\mbox{cash position } + \Yn \Sn.
\e*
It does not correspond to the liquidation value of the portfolio, except when $\Yn=0$. This is due to the fact that the liquidation of $\Yn$ stocks    does not generate a gain equal to $\Yn \Sn$, because of the price impact.
However, one can infer  the exact composition in cash and stocks of the portfolio from the knowledge of the couple $(\Vn,\Yn)$.

Throughout this paper, we assume that the risk-free interest rate is zero (for ease of notations). Then,
\begin{equation}
\Vn = V_{0 } + \int_{0}^{\cdot}\Yn_{s-}d\Sn_{s} + \sum_{i=1}^{n}\1_{[\ti,T]}  \frac12 ( \deltan_{\ti})^{2}f(\Sn_{\ti-}). \label{eq: dyna Vn}
\end{equation}
This wealth equation is derived as in \cite{bouchard2015almost} following elementary calculations. The last term of the right-hand side comes from the fact that, at time $\ti$, $\deltan_{\ti}$ shares are bought at the average execution price $\Sn_{\ti-} + \frac{1}{2}\deltan_{\ti}f(\Sn_{\ti-})$,
and the stock's price ends at $\Sn_{\ti-}+\deltan_{\ti}f(\Sn_{\ti-})$, whence the additional profit term. However, one can check that a profitable round trip trade can not be built, { see \cite[Remark 3]{bouchard2015almost}.}

 \begin{remark} Note that in this work we restrict ourselves to a permanent price impact, no resilience effect is modeled. We shall explain in Section \ref{sec: resilience}
 below why taking resilience into account does not affect our analysis. See in particular Proposition \ref{prop: resilience}.
 \end{remark}

\subsection{Continuous time trading dynamics}

The continuous time trading dynamics is obtained by passing to the limit $n\to \infty$, i.e.,~by considering strategies with  increasing frequency of rebalancement.
\begin{proposition}{\cite[Proposition 1]{bouchard2015almost}}\label{prop : lim quand n to infty} Let $Z:=(X,Y,V)$ where $Y$ is defined as in \reff{eq: def Y sans saut} for some $(a,b)\in \Ac^{\circ}$, and  $(X,V)$ solves
\begin{align}
X&=X_{0 }+\int_{0}^{\cdot} \sigma(X_{s}) dW_{s} + \int_{0}^{\cdot} f(X_{s}) dY_{s}+\int_{0}^{\cdot} (\mu(X_{s})+a_{s}(\sigma f')(X_{s}) )ds\nonumber\\
&=X_{0 }+\int_{0}^{\cdot} \sigma_{X}^{a_{s}}(X_{s}) dW_{s} + \int_{0}^{\cdot} \mu_{X}^{a_{s},b_{s}}(X_{s})ds  \label{eq: S lim conti}
\end{align}
with
\begin{align*}
\sigma_{X}^{a_{s}}:=(\sigma+a_{s}f)\:\;,\;\;\mu_{X}^{a_{s},{b_{s}}}:=( {\mu+}b_{s}f+a_{s}\sigma f' ),
\end{align*}
and
\begin{equation}
V=V_{0 }+ \int_{0}^{\cdot} Y_{s}dX_{s}+\frac12 \int_{0}^{\cdot} a^{2}_{s} f(X_{s}) ds.\label{eq: V lim conti}
\end{equation}
Let $Z^{n}:=(X^{n},Y^{n},V^{n})$ be defined as in \reff{eq: dyna Sn}-\reff{eq: def Yn sans saut}-\reff{eq: dyna Vn}.
Then,  there exists a constant $C>0$ such that
\b*
\sup_{[0,T]}\Esp{  |Z^{n} -Z |^{2} }\le Cn^{-1}
\e*
for all $n\ge 1$.
 \end{proposition}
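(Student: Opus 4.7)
The plan is to control the three components of $Z^n-Z$ separately in $L^2$, with the nontrivial work concentrated on the jump term of $\Sn$, where a discrete It\^o--Taylor expansion identifies the drift correction $a_s\sigma(X_s) f'(X_s)$ appearing in \eqref{eq: S lim conti}. Since $\Yn_t = Y_{\ti}$ on $[\ti,\tip)$ and $(a,b)\in\Ac^{\circ}_{k}$ is bounded, the Burkholder--Davis--Gundy inequality immediately yields $\sup_{[0,T]}\Esp{|\Yn_t-Y_t|^2}\le C/n$, and by the same token $f(\Sn)$ will remain close to $f(X)$ once the $\Sn\to X$ estimate is proven.

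The main step is the analysis of $J^n_t := \sum_{\ti\le t}\deltan_{\ti}f(\Sn_{\ti-})$. Between grid points $\Sn$ evolves continuously, so It\^o's formula yields
\begin{align*}
f(\Sn_{\tip-}) = f(\Sn_{\ti}) + \int_{\ti}^{\tip} (f'\sigma)(\Sn_s)\,dW_s + \int_{\ti}^{\tip}\bigl(f'\mu+\tfrac12 f''\sigma^2\bigr)(\Sn_s)\,ds.
\end{align*}
Combined with $\deltan_{\tip}=\int_{\ti}^{\tip}a_s\,dW_s+\int_{\ti}^{\tip}b_s\,ds$, a summation by parts together with It\^o's product formula shows that $J^n_t$ decomposes as a Riemann--It\^o sum converging to $\int_0^t f(X_s)\,dY_s$, plus a bracket contribution converging to $\int_0^t a_s f'(X_s)\sigma(X_s)\,ds$, plus an $L^2$-remainder which, thanks to the It\^o isometry and the boundedness hypotheses on $(a,b,\alpha,\beta)$, is of order $n^{-1/2}$. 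This is precisely the origin of the extra drift in \eqref{eq: S lim conti}.

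Plugging this expansion into the dynamics of $\Sn$, the difference $\Sn-X$ satisfies a perturbed SDE with Lipschitz coefficients and an $L^2$-source of order $n^{-1/2}$; BDG together with Gronwall's inequality then give $\sup_{[0,T]}\Esp{|\Sn_t-X_t|^2}\le C/n$. For $\Vn$ one uses \eqref{eq: dyna Vn}: the difference $\int \Yn_{s-}\,d\Sn_s-\int Y_s\,dX_s$ is handled by BDG and the preceding two estimates, whereas the quadratic cost $\tfrac12\sum(\deltan_{\ti})^2 f(\Sn_{\ti-})$ converges in $L^2$ to $\tfrac12\int_0^T a_s^2 f(X_s)\,ds$ via the conditional expansion $\Esp{(\deltan_{\tip})^2\,|\,\Fc_{\ti}}=\int_{\ti}^{\tip}a_s^2\,ds + O(n^{-2})$. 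The main obstacle is the jump-term analysis: extracting the bracket contribution without accumulating $L^2$-errors worse than $n^{-1/2}$ forces one to use the full regularity built into $\Ac^{\circ}_{k}$, in particular the bounds on $(\alpha,\beta)$ and the $L^1$-increment control, which are what allow a uniform handle on all residuals generated by expanding $a$ itself via It\^o's formula.
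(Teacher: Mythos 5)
The paper does not give its own proof of this proposition: it is cited directly as \cite[Proposition~1]{bouchard2015almost}. Your sketch is, in broad strokes, the expected argument — It\^{o} expansion of $f(\Sn_{\cdot-})$ over each grid interval, product formula against $\deltan_{\tip}=\int_{\ti}^{\tip}a_s\,dW_s+\int_{\ti}^{\tip}b_s\,ds$ to isolate the cross-bracket $\int a_s(\sigma f')\,ds$ as the source of the extra drift in \reff{eq: S lim conti}, Gronwall for the $X$-component, and a conditional second-moment expansion of $\deltan_{\tip}$ for the cost term in $V^n$. That decomposition, residual bookkeeping via It\^{o} isometry/BDG, and Gronwall are exactly what one should do.

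There are, however, two points that need correcting. The first is substantive: you assert that ``extracting the bracket contribution without accumulating $L^2$-errors worse than $n^{-1/2}$ forces one to use the full regularity built into $\Ac^{\circ}_{k}$, in particular the bounds on $(\alpha,\beta)$ and the $L^1$-increment control.'' This is not so. The bracket term $\sum\int_{\ti}^{\tip}a_s(\sigma f')(\Sn_s)\,ds$ and all accompanying residuals are controlled by It\^{o} isometry using only that $a$ and $b$ are progressively measurable and essentially bounded (so that $\Esp{(Y_s-Y_{\ti})^2}\le C(s-\ti)$ and $\Esp{(f(\Sn_s)-f(\Sn_{\ti}))^2}\le C(s-\ti)$); you never need to It\^{o}-expand $a$ itself, nor its modulus of continuity in $L^1$. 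Indeed the paper itself notes (footnote at the definition of $\Ac^{\circ}_k$) that in \cite{bouchard2015almost} the coefficients $(a,b)$ are \emph{only} required to be progressively measurable and bounded, and that the extra conditions on $(\alpha,\beta)$ are imposed solely for the weak-formulation arguments of Section~\ref{sec: weak form}, not for this convergence result. Claiming they are indispensable misrepresents what drives the estimate. The second point is minor: the conditional expansion should read $\Esp{(\deltan_{\tip})^2\mid\Fc_{\ti}}=\Esp{\int_{\ti}^{\tip}a_s^2\,ds\mid\Fc_{\ti}}+O(n^{-3/2})$; the cross term $2\Esp{\int a\,dW\int b\,ds\mid\Fc_{\ti}}$ is of order $n^{-3/2}$, not $n^{-2}$. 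Summing $n$ of these still gives a $O(n^{-1/2})$ total, so the final rate is unaffected, but the stated order is wrong. With these two repairs the sketch is sound and matches the cited proof in approach.
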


 For the rest of the paper, we shall therefore consider \reff{eq: V lim conti}-\reff{eq: S lim conti} for the dynamics of the portfolio and price processes.

 \begin{remark} As explained in  \cite{bouchard2015almost},  the previous analysis could be extended to a non-linear impact rule   in   the size of the order.  To this end, we note that the continuous time trading dynamics described above would be the same  for a more general impact rule $\delta \mapsto F(x,\delta)$ whenever it satisfies $F(x,0){=\partial^{2}_{\delta \delta}F(x,0)}=0$ and  $\partial_{\delta}F(x,0)=f(x)$. For our analysis, we only need to consider the value and the slope  of the impact function at the origin.
\end{remark}

\subsection{Hedging equation and gamma constraint}

 Given $\phi=(y,a,b)\in \R\x \Ac^{\circ}$ and $(t,x,v)\in [0,T]\x \R\x \R$,  we now write $(X^{t,x,\phi}$, $Y^{t,\phi}$ , $V^{t,x,v,\phi})$ for the solution of \reff{eq: S lim conti}-\reff{eq: def Y sans saut}-\reff{eq: V lim conti} associated to the control $(a,b)$ with time-$t$ initial condition $(x,y,v)$.

In this paper, we consider covered options, in the sense that the trader is given at the initial time $t$ the number of shares $Y_{t}=y$ required to launch his hedging strategy and can pay the option's payoff at $T$ in cash and stocks (evaluated at their time-$T$ value).  Therefore, he does not exert any immediate impact at time $t$ nor $T$ due to the initial building or final liquidation of his position in stocks. Recalling  that $V$ stands for the sum of the position in cash and the number of held shares multiplied by their price,  the super-hedging price at time $t$  of the option with payoff $g(X^{t,x,\phi}_{T})$ is defined as
 $$
   \vrm(t,x):=\inf\{ v=c+ yx~:~(c,y)\in \R^{2} \mbox{ s.t. }    \Gc(t,x,v,y)\ne \emptyset\},
 $$
 in which $  \Gc(t,x,v,y)$ is the set of elements $(a,b)\in \Ac^{\circ}$ such that $\phi:=(y,a,b)$ satisfies
 \begin{equation*}\label{eq: def condition de sur-rep}
 V^{t,x,v,\phi}_{T}\ge g(X^{t,x,\phi}_{T}).
 \end{equation*}

 In order to understand what the associated partial differential equation is, let us first rewrite the dynamics  of $Y$ in terms of $X$:
$$
dY^{t,\phi}_{t}=\gamma^{a_{t}}_{Y}(X^{t,x,\phi}_{t})dX^{t,x,\phi}_{t} +\mu^{a_{t},b_{t}}_{Y}(X^{t,x,\phi}_{t})dt
$$
with
\begin{align}\label{eq: def gamma a}
\gamma^{a}_{Y}  :=\frac{a}{\sigma +f a}\; \;\mbox{ and }\;\;
\mu^{a,b}_{Y}  :=b-{\gamma^{a}_{Y}\mu^{a,b}_{X}}.
\end{align}
Assuming  that the hedging strategy consists in tracking the super-hedging price, as in classical complete market models,
then one should have $V^{t,x,v,\phi}=\vrm(\cdot,X^{t,x,\phi})$. If $\vrm$ is smooth, recalling \reff{eq: S lim conti}-\reff{eq: V lim conti} and  applying It\^{o}'s lemma twice  implies
\begin{equation}\label{eq: Y=delta}
Y^{t,\phi}=\partial_{x}  \vrm(\cdot,X^{t,x,\phi}) \;\mbox{ , }\; \gamma^{a}_{Y}(X^{t,x,\phi})=\partial^{2}_{xx}  \vrm(\cdot,X^{t,x,\phi}),
\end{equation}
and
\begin{equation}\label{eq: dt =dt}
{\frac12} a^{2}f(X^{t,x,\phi})=\partial_{t}  \vrm(\cdot,X^{t,x,\phi}) + \frac12 {(\sigma_X^a)^{2}}(X^{t,x,\phi}) \partial^{2}_{xx}  \vrm(\cdot,X^{t,x,\phi}).
\end{equation}
{
Then,  the right-hand side of \reff{eq: Y=delta} combined with the definition of $\gamma^{a}_{Y}$ leads to
$$a=\frac{\sigma \partial^{2}_{xx}  \vrm(\cdot,X^{t,x,\phi})}{1-f\partial^{2}_{xx}  \vrm(\cdot,X^{t,x,\phi})} \;\mbox{ , }\;\sigma_X^a=\frac{\sigma}{1-f\partial^{2}_{xx}  \vrm(\cdot,X^{t,x,\phi})},$$
and  \reff{eq: dt =dt} simplifies to
\begin{equation}\label{eq: pricing pde sans contrainte}
\left[-\partial_{t}  \vrm - \frac12  \frac{\sigma^{2} }{(1- f\partial^{2}_{xx}  \vrm )}\partial^{2}_{xx}  \vrm\right](\cdot,X^{t,x,\phi}) =0 \;\;\mbox{ on } [t,T).
\end{equation}
 }
This is precisely the pricing equation obtained in \cite{AbergelLoeper,loeper2013option}.

Equation \reff{eq: pricing pde sans contrainte} needs to be considered with some precautions due to the singularity at $f\partial^{2}_{xx}  \vrm=1$. Hence, one needs to enforce that $1-f\partial^{2}_{xx}  \vrm$ does not change sign. We choose to restrict the solutions to satisfy
$1-f\partial^{2}_{xx}  \vrm >0$, since having the opposite inequality would imply that $a$ does not have the same sign as $\partial^{2}_{xx}  \vrm$, so that, having sold a convex payoff, one would sell when the stock goes up and buy when it goes down, a very counter-intuitive fact.



In the following, we impose that  {the constraint}
 \begin{equation}\label{eq: contrainte gamma}
{-k\le  } \gamma^{a}_{Y}(X^{t,x,\phi})\le \bar \gamma (X^{t,x,\phi})  \;,\;\mbox{ on } [t,T]\;\;\P-{\rm a.e.},
 \end{equation}
should hold for some $k\ge 0$, in which  $\bar \gamma$ is a bounded continuous map satisfying
 \begin{equation}\label{eq: diff bar gamma f}
  \iota\le \bar \gamma\le 1/f-\iota,\;\;\mbox{  for some $\iota>0$. }
 \end{equation}
{We denote by $\Ac_{k,\bar \gamma}(t,x)$ the collection of elements $(a,b)\in \Ac^{\circ}_{k}$ such that \reff{eq: contrainte gamma} holds and define
 $$
 \Ac_{\bar \gamma}(t,x):=\cup_{k\ge 0}\Ac_{k,\bar \gamma}(t,x).
 $$}

 Then, the equation \reff{eq: pricing pde sans contrainte} has to be modified to take this gamma constraint into account, leading naturally to
 \begin{equation}\label{eq: pde contrainte interior}
F[\vrmg]:=\min\left\{ -\partial_{t}\vrmg - \frac12  \frac{\sigma^{2} }{1- f\partial^{2}_{xx}\vrmg }\partial^{2}_{xx}\vrmg\;,\;\bar \gamma - \partial^{2}_{xx}\vrmg\right\}=0
\;\mbox{ on } [0,T)\x \R,
 \end{equation}
 in which $\vrmg$ is defined as $\vrm$ but with
 $$
 \Gcg(t,x,v,y):={\Gc(t,x,v,y)\cap \Ac_{\bar \gamma}(t,x)}
 $$
 in place of $\Gc(t,x,v,y)$. More precisely,
 \begin{equation}\label{eq: def vgamma}
   \vrmg(t,x):=\inf\{ v=c+ yx~:~(c,y)\in \R^{2} \mbox{ s.t. }    \Gcg(t,x,v,y)\ne \emptyset\}.
 \end{equation}

 As for the $T$-boundary condition, we know that $\vrmg(T,\cdot)=g$ by definition. However, as usual, the constraint on the gamma in \reff{eq: pde contrainte interior} should propagate up to the boundary and $g$ has to be replaced by its face-lifted version $\hat g$, defined as the smallest function above $g$ that is a viscosity super-solution of the equation $\bar \gamma - \partial^{2}_{xx}\vp\ge 0$. It is obtained by considering any twice continuously differentiable function $\bar \Gamma$ such that $\partial^{2}_{xx} \bar \Gamma=\bar \gamma$, and then setting
\begin{equation*}\label{eq: def hat g}
\hat{g}:=(g-\bar \Gamma)^{\text{conc}} +  \bar \Gamma,
\end{equation*}
in which the superscript \text{conc} means concave envelope, cf.~\cite[Lemma 3.1]{SonTouz}.\footnote{Obviously, adding an affine map to $\bar \Gamma$ does not change the definition of $\hat g$.}  Hence, we expect that
 \begin{equation*}
\vrmg(T-,\cdot)= \hat g
\;\mbox{ on }   \R.
 \end{equation*}
{From now on, we assume that}
\begin{align}\label{eq: hyp hat g}
\begin{array}{c}
\mbox{$\hat g$ is uniformly continuous,} \\
\mbox{{$g$ is lower-semicontinuous, $g^{-}$ is bounded and $g^{+}$ has linear growth.}}
\end{array}
\end{align}

We are now in a position to state our main result. From now on 
$$
\vrmg(T,x)\;  \mbox{ stands for } \lim_{\tiny \begin{array}{c}(t',x')\to (T,x)\\ t'<T\end{array}}  \vrmg(t',x')
$$ 
whenever it is well defined.

\begin{theorem}\label{thm: main} {The value function  $\vrmg$ is continuous with linear growth. Moreover,  $\vrmg$ is the unique   viscosity solution with linear growth} of
\be\label{eq: VisEqBd}
F[\vp]\1_{[0,T)}+(\vp-\hat{g})\1_{\{T\}}=0 \;\;\mbox{ on $[0,T]\x \R$.}
\ee
\end{theorem}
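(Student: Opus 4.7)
The plan is to combine three ingredients: the viscosity super-solution property of $\vrmg$, derived via stochastic target arguments in the spirit of \cite{cheridito2005multi}; the viscosity sub-solution property, obtained through PDE smoothing and a verification argument since the geometric dynamic programming principle is unavailable; and a comparison principle for \reff{eq: VisEqBd} among functions of linear growth, from which uniqueness, continuity, and linear growth of $\vrmg$ follow automatically.

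For the super-solution part, let $\vp$ be a smooth test function touching the lower semicontinuous envelope $(\vrmg)_*$ from below at $(t_0,x_0)\in [0,T)\x \R$. Picking nearly optimal strategies $\phi_n=(y_n,a_n,b_n)\in \Ac_{\bar\gamma}(t_n,x_n)$ with $v_n\downarrow (\vrmg)_*(t_0,x_0)$ and $(t_n,x_n)\to (t_0,x_0)$, and applying It\^o's formula to $V^{t_n,x_n,v_n,\phi_n}-\vp(\cdot,X^{t_n,x_n,\phi_n})$ and $Y^{t_n,\phi_n}-\partial_x\vp(\cdot,X^{t_n,x_n,\phi_n})$, matching the diffusive parts forces $y_n\to \partial_x\vp(t_0,x_0)$ and, via \reff{eq: def gamma a}, links the initial value of $a_n$ to $\partial^2_{xx}\vp(t_0,x_0)$. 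The constraint \reff{eq: contrainte gamma} then gives $\partial^2_{xx}\vp(t_0,x_0)\le \bar\gamma(x_0)$, and the drift inequality yields the PDE inequality in \reff{eq: pde contrainte interior}, so $F[\vp](t_0,x_0)\ge 0$. Propagation of the gamma constraint up to $T$, combined with $\vrmg(T,\cdot)\ge g$, delivers $(\vrmg)_*(T,\cdot)\ge \hat g$ via the very definition of the face-lift.

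For the sub-solution property, I would implement the smoothing strategy of \cite{bouchard2013stochastic}. Fix $\eps>0$ and construct a smooth $\vp^\eps$ on $[0,T]\x \R$ with $\vp^\eps(T,\cdot)\ge \hat g$, $\partial^2_{xx}\vp^\eps\le \bar\gamma-\eta_\eps$ for some $\eta_\eps>0$, and $F[\vp^\eps]\ge 0$ pointwise; this is feasible because $\bar\gamma\le 1/f-\iota$ from \reff{eq: diff bar gamma f} keeps $1-f\partial^2_{xx}\vp^\eps$ uniformly away from zero, so the pricing equation is uniformly parabolic along $\vp^\eps$. The natural controls $y=\partial_x\vp^\eps(t,x)$ and $a=\sigma\partial^2_{xx}\vp^\eps/(1-f\partial^2_{xx}\vp^\eps)$ (with $b$ computed by applying It\^o's formula to $\partial_x\vp^\eps(\cdot,X)$) lie in $\Ac_{k,\bar\gamma}(t,x)$ for $k$ large, and a verification computation using \reff{eq: S lim conti}--\reff{eq: V lim conti} yields $V_T\ge \vp^\eps(T,X_T)\ge \hat g(X_T)\ge g(X_T)$, so $\vrmg\le \vp^\eps$ everywhere. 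Letting $\eps\downarrow 0$, the upper relaxed semi-limit of $\vp^\eps$ is a viscosity super-solution of \reff{eq: VisEqBd} dominating $\vrmg^*$; combined with the comparison principle below, this forces $\vrmg^*$ to be a viscosity sub-solution.

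The final piece is a comparison principle for \reff{eq: VisEqBd} among functions of linear growth: on the admissibility region $\{\partial^2_{xx}\vp\le \bar\gamma\}\subset \{1-f\partial^2_{xx}\vp\ge f\iota\}$, $F$ is proper, degenerate parabolic, and its nonlinearity $r\mapsto \sigma^2 r/(1-fr)$ is smooth and non-decreasing; a standard doubling-of-variables argument with a quadratic penalization in $x$ (to cope with linear growth) yields comparison, and sandwiching $(\vrmg)_*\le \vrmg\le \vrmg^*$ then gives uniqueness and continuity. Linear growth of $\vrmg$ itself comes from an explicit quadratic smooth super-solution majorizing $\hat g$ and from the trivial lower bound $\vrmg\ge (g\wedge 0)$. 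The main obstacle will be the sub-solution step, specifically the construction of $\vp^\eps$ approximating $\hat g$ from above while maintaining the strict gamma bound, and pushing the verification through the coupled impact dynamics \reff{eq: S lim conti}--\reff{eq: V lim conti}; this detour via smoothing is unavoidable because the geometric dynamic programming principle, which would normally deliver the sub-solution property directly as in \cite{cheridito2005multi}, fails here due to the feedback of the hedge onto the price, and the same construction will provide the $\eps$-optimal hedges advertised in the abstract.
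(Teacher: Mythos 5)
Your overall architecture matches the paper's: smoothing to produce an upper bound, a stochastic-target supersolution property for a lower bound, and a linear-growth comparison principle to close the sandwich. But there is a genuine gap in the subsolution step. You construct smooth supersolutions $\vp^\eps$ with $\vrmg\le\vp^\eps$, then assert that the relaxed semi-limit of the $\vp^\eps$ is a supersolution dominating $\vrmg^*$ and that, combined with comparison, ``this forces $\vrmg^*$ to be a viscosity sub-solution.'' That implication does not hold: knowing $\vrmg^*$ lies below a supersolution gives you nothing about the subsolution property of $\vrmg^*$. The paper's resolution is to make the pre-mollified approximants genuine \emph{solutions}, not just supersolutions. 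Via Krylov's shaking of the coefficients it constructs $\vrmghe$ solving the perturbed equation $F^{\epsilon}_{\kappa}[\vp]=0$ with terminal datum $\hat g_K^{\epsilon}$ (Proposition \ref{prop:existence}), so that by stability their limit $\vrmgh$ is a viscosity \emph{solution} of \reff{eq: VisEqBd} (Proposition \ref{prop: conv bar v eps to bar v}). The mollified $\vrmgbed$ are only supersolutions and are used purely for the verification $\vrmg\le\vrmgbed$; passing $\delta\to0$, $\epsilon\to0$, $K\to\infty$ then gives $\vrmg\le\vrmgh$. Only then does the sandwich $\vrmgl\le\vrmg\le\vrmgh$ plus comparison (supersolution $\vrmgl$ versus subsolution $\vrmgh$, yielding $\vrmgl\ge\vrmgh$) collapse everything to equality. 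Without the shaken-coefficients construction producing a limiting \emph{solution}, your argument does not yield the subsolution property.

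A second, lighter point: for the supersolution property you propose a direct argument with nearly-optimal strong controls and It\^{o} calculus. That requires a half geometric dynamic programming inequality $V_\theta\ge\vrmg(\theta,X_\theta)$, and establishing it (measurable selection, conditioning of the controls $(a,b)$ in the class $\Ac^{\circ}$ under the feedback of $Y$ on $X$) is precisely why the paper moves to the weak formulation $\vrmgl$ on canonical space, proving Proposition \ref{prop: gdp1} via regular conditional probabilities and the result of Claisse--Talay--Tan; the weak formulation also supplies the compactness that makes the infimum attained (Proposition \ref{prop: existence faible}), which the proof of Theorem \ref{thm: weak v is supersolution} relies on. Your sketch of the comparison principle and of the terminal face-lift is consistent with the paper's Theorem \ref{thm: Comp} and step~b of Theorem \ref{thm: weak v is supersolution}, so those parts are essentially sound in spirit.
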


We conclude this section with additional remarks.
\begin{remark} {Note that $\hat g$ can be uniformly continuous without $g$ being continuous. Take for instance $g(x)=\1_{\{x\ge K\}}$ with $K\in \R$, and consider the case where $\bar \gamma>0$ is a constant. Then, $\hat g(x)=[ \1_{\{x\ge x_{o}\}} \frac{\bar \gamma}{2}(x-x_{o})^{2}]\wedge 1$ with $x_{o}:=K-(2/\bar \gamma)^{\frac12}$.}
\end{remark}

\begin{remark}\label{rem: hat g borne} The map $\hat g$ inherits the linear growth of $g$. Indeed, let $c_{0},c_{1}\ge 0$ be constants such that $|g(x)| \le w(x):=c_{0}+c_{1}|x|$. Since $\hat g\ge g$ by construction, we have $\hat g^{-}\le w$.  On the other hand, since $\bar \gamma\ge \iota>0$, by \reff{eq: diff bar gamma f}, it follows from the arguments in  \cite[Lemma 3.1]{SonTouz}  that   $\hat g\le (w-\tilde \Gamma)^{{\rm conc}}+\tilde \Gamma$, in which $\tilde \Gamma(x)=\iota x^{2}/2$.
Now, one can easily check by direct computations  that
$$
 (w-\tilde \Gamma)^{{\rm conc}} =(w-\tilde \Gamma)(x_{o})\1_{[-x_{o},x_{o}]} +(w-\tilde \Gamma)\1_{[-x_{o},x_{o}]^{c}}
$$
with $x_{o}:=c_{1}/\iota$. Hence, $ (w-\tilde \Gamma)^{{\rm conc}} +\tilde  \Gamma$ has the same  linear growth as $w$.
\end{remark}

\begin{remark} As will appear in the rest of our analysis, one could very well introduce a time dependence in the impact function $f$ and in $\bar\gamma$. Another interesting question studied by the
second author in  {\cite{loeper2013option} and \cite{loeper2014Solution}} concerns the smoothness of the solution and how the constraint on $\partial^2_{xx}\vrm$ gets naturally enforced by the fast diffusion arising when $1-f \partial^2_{xx}\vrm$ is close to 0.
\end{remark}

\begin{remark}[{Existence of a smooth solution to the original partial differential equation}]  When the pricing equation {\reff{eq: VisEqBd}} admits smooth solutions  {(cf.~\cite{loeper2013option} and \cite{loeper2014Solution})} that allow to use the verification theorem, then one can   construct exact replication strategies from the classical solution. By the comparison principle of Theorem \ref{thm: Comp} below, this shows that the value function is
the classical solution of the pricing equation, and that the optimal strategy exists and is an exact replication  strategy of the option with  payoff function $\hat g$. We will explain in Remark \ref{rem: almost optimal controls} below how almost optimal super-hedging strategies can be constructed explicitly even when no smooth solution exists.  \end{remark}

\begin{remark}[Monotonicity in the impact function]\label{rem: pde non decreasing} Note that the map $\lambda\in \R \mapsto \frac{\sigma^{2}(x)M}{1-\lambda M}$ is non-decreasing on $\{\lambda: \lambda M< 1\}$, for all $(t,x,M)\in [0,T]\x \R\x \R$. Let us now write $\vrmg$ as $\vrmg^{f}$ to emphasize its dependence on $f$, and  consider another {impact} function $\tilde f$ satisfying the same requirements as $f$. We denote by $\vrmg^{\tilde f}$ the corresponding super-hedging price. Then, the above considerations combined with Theorem \ref{thm: main}  and the comparison principle of Theorem \ref{thm: Comp} below imply that $\vrmg^{\tilde f}\ge \vrmg^{f}$
whenever $\tilde f\ge f$ on $\R$. The same implies that $\vrmg^{f}\ge {\rm v}$ in which ${\rm v}$ solves the heat-type  equation
\begin{equation*}
 -\partial_{t}\vp - \frac12  \sigma^{2}\partial^{2}_{xx}\vp=0
\;\mbox{ on } [0,T)\x \R,
 \end{equation*}
with terminal condition $\vp(T,\cdot)=g$ (recall that $\hat g\ge g$).  See Section \ref{sec: exemple num} for a numerical illustration of this fact.
\end{remark}


\section{Viscosity solution characterization}\label{sec: hedging}
 \def\Sc{{\cal S}}
\def\Srm{{\rm S}}
\def\Srmt{{\tilde \Srm}}
\def\vrmglk{\underline{\rm v}_{\bar \gamma}^{k}}
\def\vrmgl{\underline{\rm v}_{\bar \gamma}}
\def\vrmgh{\bar{\rm v}_{\bar \gamma}}
\def\vrmghe{\bar{\rm v}^{\epsilon,K}_{\bar \gamma}}
\def\vrmghed{\bar{\rm v}^{\epsilon,\delta}_{\bar \gamma}}

In this section, we provide the proof of Theorem \ref{thm: main}. Our strategy is the following.

\begin{enumerate}
\item First, we adapt the partial differential equation smoothing technique used in \cite{bouchard2013stochastic} to provide a smooth supersolutions $\vrmgbed$ of \reff{eq: VisEqBd} on $[\delta,T]\x \R$, with $\epsilon>0$,  from which super-hedging strategies can be constructed by a standard verification argument.  In particular, $\vrmgbed\ge \vrmg$ on $[\delta,T]\x \R$. Moreover,  this sequence has a uniform linear growth and converges to a viscosity  solution $\vrmgh$ of  \reff{eq: VisEqBd} as $\delta,\epsilon\to 0$ and $K\to \infty$. See Section \ref{sec: smoothing}.

\item Second, we construct a lower bound $\vrmgl$ for $\vrmg$ that is a supersolution of \reff{eq: VisEqBd}. It is obtained by considering a weak formulation of the super-hedging problem and following the arguments of  \cite[Section 5]{cheridito2005multi} based on one side of the geometric dynamic programming principle, see Section \ref{sec: weak form}.  It is shown that this function has linear growth as well.
\item We can then conclude by using the above and the comparison principle for \reff{eq: VisEqBd} of Theorem \ref{thm: Comp} below:   $\vrmgl\ge \vrmgh$ but $\vrmgl\le \vrmg\le \vrmgh$   so that $\vrmg=\vrmgh=\vrmgl$  and $\vrmg$ is a viscosity solution of \reff{eq: VisEqBd}, and has linear growth.
\item Our comparison principle, Theorem \ref{thm: Comp} below, allows us to conclude that $\vrmg$ is the unique  solution of \reff{eq: VisEqBd}  with linear growth.
\end{enumerate}

As already mentioned in the introduction,  unlike \cite{cheridito2005multi}, we could not prove the required geometric dynamic programming principle that should directly lead to a subsolution property (thus avoiding to use the smoothing technique mentioned in 1.~above).   This is due to  the strong interaction between the hedging strategy and the underlying price process through  the market impact.  Such a feedback effect  is not present in \cite{cheridito2005multi}.

\subsection{A sequence of smooth supersolutions}\label{sec: smoothing}

We first construct a sequence of smooth supersolutions $\vrmgbed$ of \reff{eq: VisEqBd}   which appears to be an upper bound for the super-hedging price $\rm v_{\bar \gamma}$, by a simple verification argument. For this, we adapt the methodology introduced in \cite{bouchard2013stochastic}: we first construct a viscosity solution of a version of \reff{eq: VisEqBd} with shaken coefficients (in the terminology of  \cite{krylov2000rate}) and then smooth it out with a kernel.
The main difficulty here is that our terminal condition $\hat g$ is unbounded, unlike \cite{bouchard2013stochastic}. This requires  additional non trivial technical developments.

\subsubsection{Construction of a solution for the operator with shaken coefficients}

We start with the construction of the operator with shaken coefficients. Given ${\epsilon}>0$ and a (uniformly) strictly positive continuous map $\kappa$ with linear growth, that will be defined later on,  let us  introduce a family of perturbations of {the operator appearing in} (\ref{eq: VisEqBd}):
$$
F_{\kappa}^{\epsilon}(t,x,q,M) := \min_{x'\in D^{\epsilon}_{\kappa}(x)} \min\left\{-q-\frac{\sigma^{2}(x')M}{2(1-f(x')M)}, \bar \gamma(x')-M\right\},
$$
where
\be\label{eq: def D eps}
D_{\kappa}^{\epsilon}(x):=\{x'\in \R: (x-x')/\kappa(x')\in [-\epsilon,\epsilon]\}.
\ee
For ease of notation, we set
$$
F_{\kappa}^{\epsilon}[\vp](t,x) := F_{\kappa}^{\epsilon}(t,x,\partial_{t}\vp(t,x), \partial^{2}_{xx}\vp(t,x)),
$$
whenever $\vp$ is smooth.

\begin{remark}\label{rem: Feps concave} For later use, note that  the map $M\in (-\infty,\bar \gamma(x)] \mapsto  \frac{\sigma^{2}(x)M}{2(1-f(x)M)}$ is non-decreasing and  convex, for each $x\in \R$, recall \reff{eq: diff bar gamma f}. Hence,  $(q,M)\in \R\x (-\infty,\bar \gamma(x)]\mapsto F_{\kappa}^{\epsilon}(\cdot,q,M)$ is concave and non-increasing in $M$, for all $\epsilon\ge 0$. This is fundamental for our smoothing approach to go through.
\end{remark}

We also  modify the original terminal condition $\hat{g}$  by using an approximating sequence whose elements are affine  for large values of $|x|$.

\begin{lemma}\label{lem: construction hat gK}  For all $K>0$ there exists a uniformly continuous map $\hat g_{K}$ and $x_{K}\ge K$   such that
\begin{itemize}
\item $\hat g_{K}$ is affine on $[x_{K},\infty)$ and on $(-\infty,-x_{K}]$
\item $\hat g_{K}=\hat g$ on $[-K,K]$
\item $\hat g_{K}\ge \hat g$
\item $\hat g_{K}-\bar \Gamma$ is concave for any $C^{2}$ function $\bar \Gamma$ satisfying $\partial^{2}_{xx} \bar \Gamma=\bar \gamma$.
\end{itemize}
Moreover, $(\hat g_{K})_{K>0}$ is uniformly bounded by  {a map with linear growth} and  converges to $\hat g$ uniformly on compact sets.
\end{lemma}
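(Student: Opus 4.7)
The key is the decomposition $\hat g = h + \bar\Gamma$ with $h := (g-\bar\Gamma)^{\text{conc}}$ concave and $\bar\Gamma\in C^{2}$ strictly convex ($\bar\Gamma''=\bar\gamma\ge \iota>0$, see \reff{eq: diff bar gamma f}). The strategy is to keep $\hat g_{K}=\hat g$ on $[-K,K]$ and, on each tail, set $\hat g_{K}$ equal to the minimum of two candidates: an affine function dominating $\hat g$ (so that $\hat g_{K}$ is eventually affine) and $\bar\Gamma$ plus a supporting line of $h$ at the boundary (so that $\hat g_{K}-\bar\Gamma$ stays concave). The strict convexity of $\bar\Gamma$ then forces the two candidates to cross in finite position, producing the desired $x_{K}$.

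Concretely, from Remark~\ref{rem: hat g borne} we fix constants $c_{0},c_{1}\ge 0$ independent of $K$ such that $\hat g(x)\le c_{0}+c_{1}|x|$, and put $\ell_{+}(x):=c_{0}+(c_{1}+1)x$ on $[0,\infty)$, $\ell_{-}(x):=c_{0}-(c_{1}+1)x$ on $(-\infty,0]$, so that $\ell_{\pm}\ge \hat g$ on the respective half-lines. I would then define, for $x\ge K$,
\[
\hat g_{K}(x):=\min\bigl\{\bar\Gamma(x)+h(K)+h'_{-}(K)(x-K),\ \ell_{+}(x)\bigr\},
\]
and analogously for $x\le -K$ using $h'_{+}(-K)$ and $\ell_{-}$. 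At $x=K$ the first candidate equals $\hat g(K)\le \ell_{+}(K)$, and its second derivative is $\bar\Gamma''\ge \iota$, hence strictly convex; it therefore eventually overtakes the affine $\ell_{+}$, yielding a finite $x_{K}\ge K$ past which $\hat g_{K}\equiv \ell_{+}$ is affine, and symmetrically on the left tail.

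Then I would verify the listed properties. Concavity of $\hat g_{K}-\bar\Gamma$ on $[K,\infty)$ holds because it is the minimum of an affine function (the supporting line of $h$) and of $\ell_{+}-\bar\Gamma$ (concave since $\bar\Gamma$ is convex); at $x=K$ the right derivative of $\hat g_{K}-\bar\Gamma$ equals $h'_{-}(K)$, matching the left derivative of $h$, so concavity is preserved across the junction even if $h$ has a kink there. The inequality $\hat g_{K}\ge \hat g$ follows from the supporting line being $\ge h$ by concavity of $h$, together with $\ell_{+}\ge \hat g$ by construction. The uniform (in $K$) linear growth bound holds since $\hat g_{K}\le \ell_{\pm}$ on the tails, with constants independent of $K$, while $\hat g_{K}\ge \hat g\ge -\|g^{-}\|_{\infty}$ from below. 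Uniform continuity follows from $\hat g_{K}$ being Lipschitz with slope $c_{1}+1$ on the tails and uniformly continuous on the central compact (as $\hat g$ is), and uniform convergence on compacts is immediate since $\hat g_{K}\equiv \hat g$ on $[-K,K]$. The delicate point I expect is precisely this junction argument: the choice of the one-sided derivatives $h'_{-}(K)$ on the right and $h'_{+}(-K)$ on the left for the slopes of the supporting lines is what simultaneously ensures domination of $h$ on the tail and preservation of concavity across the junction, despite possible non-differentiability of $h$.
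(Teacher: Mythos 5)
Your construction is correct and follows essentially the same route as the paper: extend the concave part $\hat g-\bar\Gamma$ past $\pm K$ by a supporting line and control growth by taking a minimum with an affine cap (the paper instead forms the affine extension $\hat g^\circ_K$ first and caps it with $(2c_0+c_1|\cdot|-\bar\Gamma^\circ)^{\rm conc}+\bar\Gamma^\circ$, but the two organizations are interchangeable). One small over-claim: the choice of the one-sided derivative $h'_-(K)$ is not essential — any element $\Delta^+$ of the superdifferential $\partial^+h(K)=[h'_+(K),h'_-(K)]$ both dominates $h$ (supporting line) and keeps the junction concave (since the right derivative $\le\Delta^+\le h'_-(K)=$ left derivative), which is exactly what the paper's arbitrary choice exploits.
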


\proof Fix a $C^{2}$ function $\bar \Gamma^{\circ}$ satisfying $\partial^{2}_{xx} \bar \Gamma^{\circ}=\bar \gamma$. By definition, $\hat g-\bar \Gamma^{\circ}$ is concave.
Let us consider an element $  \Delta^{+}$ (resp.~$  \Delta^{-}$) of its super-differential at $K$ (resp.~$-K$). Set
\begin{align*}
\hat g^{\circ}_K(x):= & \hat g(x)\1_{[-K,K]}(x)\\
&+ \left[\hat g(K) +  (\Delta^{+}+\partial_{x}\bar \Gamma^{\circ}(K)) (x-K)\right]\1_{(K,\infty)}(x)\\
&+ \left[\hat g(-K) +  (\Delta^{-}+\partial_{x}\bar \Gamma^{\circ}(-K)) (x+K)\right]\1_{(-\infty,-K)}(x).
\end{align*}
Consider now another $C^{2}$ function $\bar \Gamma$ satisfying $\partial^{2}_{xx} \bar \Gamma=\bar \gamma$. Since  $\bar \Gamma^{\circ}$ and $\bar \Gamma$ differ only by an affine map, the concavity of $\hat g^{\circ}_{K}-\bar \Gamma$ is equivalent to that of $\hat g^{\circ}_{K}-\bar \Gamma^{\circ}$. The concavity of the latter follows from the definition of $\hat g^{\circ}_{K}$, as the superdiffential of $\hat g^{\circ}_{K}-\bar \Gamma^{\circ}$ is non-increasing by construction. In particular, $\hat g^{\circ}_K-\bar \Gamma^{\circ}\ge \hat g -\bar \Gamma^{\circ}$ and therefore $\hat g^{\circ}_{K}\ge \hat g$.
 \\
 We finally define $\hat g_{K}$ by
 \be\label{eq: def hat g_{K}}
 \hat g_{K}=\min\{\hat g^{\circ}_{K} ,(2c_{0}+c_{1}|\cdot|-\bar \Gamma^{\circ})^{\rm conc}+\bar \Gamma^{\circ}\},  
 \ee
 with $c_{0}>0$ and $c_{1}\ge 0$ such that
 $$
-c_{0}\le  \hat g(x)\le c_{0}+c_{1}|x|, \;x\in \R,
 $$
 recall Remark \ref{rem: hat g borne}. The function $\hat g_{K}$ has the same linear growth as $2c_{0}+c_{1}|\cdot|$, by the same reasoning as in Remark \ref{rem: hat g borne}. Since $2c_{0}>c_{0}$, $\hat g_{K}=\hat g^{\circ}_{K}=\hat g$ on $[-K,K]$. Furthermore, as the minimum of two concave functions is concave, so is $ \hat g_{K}-\bar \Gamma$   for any $C^{2}$ function $\bar \Gamma$ satisfying $\partial^{2}_{xx} \bar \Gamma=\bar \gamma$.
 The other assertions are immediate.
\ep
\\

We now set
\be\label{eq: def hat g K eps}
\hat g_{K}^{\epsilon}:=\hat g_{K}+\epsilon
\ee
and consider the equation
\be\label{eq: VisEqBdPert}
F_{\kappa}^{\epsilon}[\vp]\1_{[0,T)}+(\vp-\hat{g}_{K}^{\epsilon})\1_{\{T\}}=0.
\ee
We then choose $\kappa$ and $\epsilon_{\circ}\in (0,1)$ such that
\be\label{eq: cond kappa}
\begin{array}{c }
\mbox{$\kappa \in C^{\infty}$ with bounded derivatives of all orders,}\\
 \inf\kappa>0 \mbox{ and } \kappa=|\hat g_{K}|+1 \mbox{ on }  (-\infty,-x_{K}]\cup [x_{K},\infty),\\
 -1/\epsilon_{\circ}<\partial_{x}\kappa  < 1/\epsilon_{\circ},
 \end{array}
\ee
in which $x_{K}\ge K$ is defined in Lemma \ref{lem: construction hat gK}. We omit the dependence of $\kappa$ on $K$ for ease of notations.

\begin{remark}\label{rem: inversion deca kappa} For later use, note that the condition $ |\partial_{x}\kappa |  <  1/\epsilon_{\circ}$  ensures that the map $x\mapsto x+\epsilon \kappa(x)$ and   $x\mapsto x-\epsilon \kappa(x)$ are uniformly strictly increasing for all $0\le \epsilon\le \epsilon_{\circ}$. Also observe that $x_{n}\to x$ and $x'_{n}\in D_{\kappa}^{\epsilon}(x_{n})$, for all $n$, imply that $x'_{n}$ converges to an element $x'\in  D_{\kappa}^{\epsilon}(x)$, after possibly passing to a subsequence.  In particular,  $F_{\kappa}^{\epsilon}$ is continuous.
\end{remark}

When $\kappa\equiv 1$ and $\hat g_{K}^{\epsilon}\equiv \hat g+\epsilon$, \reff{eq: VisEqBdPert}   corresponds to the operator in \reff{eq: VisEqBd} with shaken coefficients, in the traditional terminology of \cite{krylov2000rate}. The function $\kappa$ will be used below to handle the potential linear growth at infinity of $\hat g$. The introduction of the additional approximation  $\hat g_{K}^{\epsilon}$ is motivated by the fact that the proof of Proposition \ref{prop: constr v eps K delta par smoothing} below requires an affine behavior at infinity. As already mentioned, these additional complications do not appear in  \cite{bouchard2013stochastic} because their terminal condition is bounded.

 We now prove that \reff{eq: VisEqBdPert} admits a viscosity solution that remains above the terminal condition $\hat g$ on a small time interval $[T-c^{K}_{\epsilon},T]$. {As already mentioned, we will later  smooth this solution out with a regular kernel, so as  to provide a smooth supersolution of  \reff{eq: VisEqBd}. }

\begin{proposition}\label{prop:existence} For all $\epsilon\in [0,\epsilon_{\circ}]$ and $K>0$, there exists a unique continuous viscosity solution $\vrmghe$ of \reff{eq: VisEqBdPert}
that has linear growth. It
satisfies
\be\label{eq: VisEqreste au dessus hat g + eps pres T}
\vrmghe \ge \hat{g}_{K}+\epsilon/2, \hspace{5mm} \text{ on } [T-c^{K}_{\epsilon}, T] \times\mathbb{R},
\ee
for some  $c^{K}_{\epsilon} \in (0,T)$.

\noindent Moreover,  $\{[\vrmghe]^{+},\epsilon\in [0,\epsilon_{\circ}],K>0\}$ is   bounded by  {a  map with linear growth}, and    $\{[\vrmghe]^{-},\epsilon\in [0,\epsilon_{\circ}],K>0\}$ is  bounded by $\sup g^{-}$.
\end{proposition}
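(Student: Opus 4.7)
The plan is to apply Perron's method together with a comparison principle adapted to the shaken operator $F_{\kappa}^{\epsilon}$, which is continuous by Remark \ref{rem: inversion deca kappa} and, crucially, concave and non-increasing in $(q,M)$ on the relevant domain by Remark \ref{rem: Feps concave}. A comparison principle for \reff{eq: VisEqBdPert} can be obtained by adapting the doubling-variable argument of Theorem \ref{thm: Comp} below to the present shaken operator; I take it for granted here. Given comparison, Perron then delivers the unique continuous viscosity solution sandwiched between any explicit sub/supersolutions of linear growth whose terminal traces bracket $\hat g_{K}^{\epsilon}$.

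For the pointwise lower bound \reff{eq: VisEqreste au dessus hat g + eps pres T}, I would work with the explicit subsolution $\underline\psi(t,x):=\hat{g}_{K}(x)+\epsilon-C(T-t)$, where $C$ is a universal constant depending only on $\sigma$, $f$ and $\bar\gamma$. To check that $\underline\psi$ is a viscosity subsolution of $F_{\kappa}^{\epsilon}[\vp]\le 0$ on $[0,T)\times\R$, pick any smooth test function $\phi\ge\underline\psi$ touching $\underline\psi$ at $(t_{0},x_{0})$; tangency forces $\partial_{t}\phi(t_{0},x_{0})=C$. Evaluating the $\min$ defining $F_{\kappa}^{\epsilon}$ at the admissible point $x'=x_{0}\in D_{\kappa}^{\epsilon}(x_{0})$ with $M:=\partial_{xx}^{2}\phi(t_{0},x_{0})$: if $M\ge\bar\gamma(x_{0})$, the gamma component $\bar\gamma(x_{0})-M$ is non-positive; otherwise $M<\bar\gamma(x_{0})$, so $1-f(x_{0})M\ge f(x_{0})\iota>0$ by \reff{eq: diff bar gamma f}, and the ratio $\sigma^{2}(x_{0})M/[2(1-f(x_{0})M)]$ is bounded in modulus by a universal $B<\infty$ (one computes $|\cdot|\le B:=\sup\sigma^{2}/(2f)\vee\sup\sigma^{2}\bar\gamma/[2(1-f\bar\gamma)]$), whence $-C-\sigma^{2}(x_{0})M/[2(1-f(x_{0})M)]\le -C+B\le 0$ as soon as $C\ge B$. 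Either way $F_{\kappa}^{\epsilon}[\phi](t_{0},x_{0})\le 0$. Since $\underline\psi(T,\cdot)=\hat{g}_{K}^{\epsilon}$, comparison yields $\underline\psi\le\vrmghe$, and setting $c^{K}_{\epsilon}:=(\epsilon/(2C))\wedge T$ gives $\vrmghe\ge\hat{g}_{K}+\epsilon/2$ on $[T-c^{K}_{\epsilon},T]\times\R$.

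For existence and the uniform bounds on $[\vrmghe]^{\pm}$, I would complement the above with two further ingredients. First, the constant map $-\sup g^{-}$ is trivially a viscosity subsolution (zero derivatives give $F_{\kappa}^{\epsilon}=0$) lying below $\hat{g}_{K}^{\epsilon}$ at $T$, so comparison delivers $[\vrmghe]^{-}\le\sup g^{-}$. Second, I would construct a supersolution of the form $\bar\psi(t,x)=\tilde h(x)+D(T-t)$, where $\tilde h$ is the face-lift of $\hat{g}_{K}^{\epsilon}$ with respect to the \emph{shaken} modulus $\bar\gamma_{\kappa}^{\epsilon}(x):=\inf_{x'\in D_{\kappa}^{\epsilon}(x)}\bar\gamma(x')$ and $D\ge B$ dominates the parabolic term uniformly as above. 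The concave-envelope argument of Remark \ref{rem: hat g borne} shows that $\tilde h$ inherits the linear growth of $\hat{g}_{K}+\epsilon$, uniformly in $(\epsilon,K)$, since $\bar\gamma_{\kappa}^{\epsilon}\ge\iota/2$ for $\epsilon\le\epsilon_{\circ}$ by the continuity and uniform positivity of $\bar\gamma$ combined with the definition of $D_{\kappa}^{\epsilon}$. Perron's method and comparison then yield existence, uniqueness, continuity and the claimed uniform linear-growth bound on $[\vrmghe]^{+}$.

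The main obstacle is precisely this last construction: one must verify that the face-lift of $\hat g_{K}^{\epsilon}$ relative to $\bar\gamma_{\kappa}^{\epsilon}$ (i) still dominates $\hat g_{K}^{\epsilon}$ at $T$, (ii) still has linear growth with constants independent of $\epsilon\le\epsilon_{\circ}$ and $K>0$, and (iii) yields a viscosity supersolution of the shaken gamma inequality $\bar\gamma(x')-\partial_{xx}^{2}\vp\ge 0$ for every $x'\in D_{\kappa}^{\epsilon}(x)$, which is strictly stronger than the unshaken face-lift property of $\hat g_{K}^{\epsilon}$. By contrast, the subsolution step leading to \reff{eq: VisEqreste au dessus hat g + eps pres T} is easy because $F_{\kappa}^{\epsilon}$ is a $\min$: it suffices to make one of its two components non-positive at the single admissible point $x'=x_{0}$.
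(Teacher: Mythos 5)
Your overall Perron-plus-comparison framework matches the paper's proof, and the comparison principle you ``take for granted'' is exactly Theorem~\ref{thm: Comp}, which is already stated for the shaken operator $F^{\epsilon}_{\kappa}$ for any $\epsilon\in[0,\epsilon_{\circ}]$ --- no further adaptation is required. Your argument for the near-$T$ bound \reff{eq: VisEqreste au dessus hat g + eps pres T} via the explicit subsolution $\underline\psi(t,x)=\hat g_{K}(x)+\epsilon-C(T-t)$ is a genuine and arguably cleaner alternative to the paper's route: the paper instead shows that $\vrmghe$ dominates the solution $v_{\epsilon,K}$ of the \emph{linear} equation \reff{eq: Regular}, and then estimates $|v_{\epsilon,K}(t,\cdot)-\hat g_{K}^{\epsilon}|$ near $T$ by Feynman--Kac, uniform continuity, and the affine tails of $\hat g_{K}$ from Lemma~\ref{lem: construction hat gK}. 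Your subsolution sidesteps all of this. (Two small points: at $t_{0}=0$ tangency yields only $\partial_{t}\phi(t_{0},x_{0})\ge C$, not equality, but that is still sufficient; and you correctly observe that concavity of $\hat g_{K}-\bar\Gamma$ constrains the subjet, not the superjet, hence the need for the case split on $M$ versus $\bar\gamma(x_{0})$.)

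However, for the supersolution $\bar\psi$ you make the construction harder than necessary, and the ``main obstacle'' you flag is a non-issue in the paper's simpler route. There is no reason to face-lift $\hat g_{K}^{\epsilon}$ with respect to the spatially-varying shaken modulus $\bar\gamma^{\epsilon}_{\kappa}$. Instead, fix a constant $\eta\in(0,\iota\wedge\inf f^{-1})$, set $\tilde\Gamma(x):=\eta x^{2}/2$, $\tilde g:=(\hat g_{K}^{\epsilon_{\circ}}-\tilde\Gamma)^{\rm conc}+\tilde\Gamma$, and $\bar w:=\tilde g+1+(T-t)A$ with $A:=\sup\sigma^{2}\bar\gamma/[2(1-f\bar\gamma)]$. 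Then $\tilde g-\tilde\Gamma$ is concave, so $\tilde g$ is a viscosity supersolution of $\eta-\partial^{2}_{xx}\vp\ge0$; since $\eta<\iota\le\bar\gamma(x')$ for \emph{every} $x'$, the shaken constraint $\min_{x'\in D^{\epsilon}_{\kappa}(x)}\{\bar\gamma(x')-\partial^{2}_{xx}\bar w\}\ge0$ follows automatically --- no need to introduce $\bar\gamma^{\epsilon}_{\kappa}$ at all. The parabolic term is handled by the monotonicity of $M\mapsto\sigma^{2}M/[2(1-fM)]$ from Remark~\ref{rem: Feps concave} and the choice of $A$. This also delivers the uniform-in-$(\epsilon,K)$ linear growth directly via Remark~\ref{rem: hat g borne} together with the uniform bound of Lemma~\ref{lem: construction hat gK}, without any of the verifications (i)--(iii) you list. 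In short, the reflex to match the shaken operator with a shaken face-lift is unnecessary: one constant below $\inf\bar\gamma$ dominates all the shaken gamma constraints simultaneously, which is exactly the paper's trick.
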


\proof  The proof is mainly a modification of the usual Perron's method, see \cite[Section 4]{CrandallIshiiLions}.

\noindent \textbf{a.} We first prove that there exists two continuous functions   $\bar w$ and $\underline w$ with linear growth that are respectively super- and subsolution of \reff{eq: VisEqBdPert} for any $\epsilon\in [0,\epsilon_{\circ}]$.

Since $\hat g_{K}^{\epsilon}=\hat g_{K}+\epsilon\ge   g$ by Lemma \ref{lem: construction hat gK}, it suffices to set
$$
\underline w:=\inf g>-\infty,$$
see \reff{eq: hyp hat g}.  To construct a supersolution $\bar w$, let us fix   $\eta\in (0,\iota \wedge \inf f^{-1}$) with $\iota$ as in \reff{eq: diff bar gamma f},
set $\tilde \Gamma(x)=\eta x^{2}/2$ and define
$\tilde  g=(\hat g_{K}^{\epsilon_{\circ}}-\tilde  \Gamma)^{\rm conc}+\tilde \Gamma$. Then,  $\tilde g\ge \hat g_{K}^{\epsilon_{\circ}}$, while the same reasoning as in Remark \ref{rem: hat g borne} implies that $\tilde g$ shares the same linear growth as $\hat g_{K}^{\epsilon_{\circ}}$, see \reff{eq: def hat g K eps} and Lemma \ref{lem: construction hat gK}.
We then define $\bar w$ by
$$
\bar w(t,x)=\tilde g(x)+1+(T-t)A
$$
in which
$$
A:= \sup \frac{\sigma^{2}\bar \gamma}{2(1-f\bar \gamma)}  .
$$
The  constant $A$ is finite, and $\bar w$ has the same linear growth as $\tilde g$, see \reff{eq: H1}-\reff{eq: diff bar gamma f}. Since a  concave function is a viscosity supersolution of $-\partial^{2}_{xx}\vp\ge 0$, we deduce that $\tilde g$ is a viscosity supersolution of $\eta-\partial^{2}_{xx}\vp\ge 0$.  Then, $\bar w$ is  a viscosity supersolution of
$$
\min\left\{-\partial_{t}\vp-A\;,\;\eta-\partial^{2}_{xx}\vp\right\}\ge 0.
$$
Since $\bar \gamma\ge \iota\ge \eta$, it remains to use Remark \ref{rem: Feps concave} to conclude that $\bar w$  is a supersolution of  \reff{eq: VisEqBdPert}.

\noindent\textbf{b.}
We now express (\ref{eq: VisEqBdPert}) as a single equation over the whole domain $[0,T]\times\mathbb{R}$ using the following definitions
\begin{align*}
F^{\epsilon,K}_{\kappa,+}(t,x,r,q,M)&:=F_{\kappa}^{\epsilon}(t,x,q,M)\1_{[0,T)}+\max\big\{F_{\kappa}^{\epsilon}(t,x,q,M),r-\hat{g}_{K}^{\epsilon}(x)\big\}\1_{\{T\}} \\
F^{\epsilon,K}_{\kappa,-}(t,x,r,q,M)&:=F_{\kappa}^{\epsilon}(t,x,q,M)\1_{[0,T)}+\min\big\{F_{\kappa}^{\epsilon}(t,x,q,M),r-\hat{g}_{K}^{\epsilon}(x)\big\}\1_{\{T\}}.
\end{align*}
As usual $F^{\epsilon,K}_{\kappa,\pm}[\vp](t,x):=F^{\epsilon,K}_{\kappa,\pm}(t,x,\vp(t,x),\partial_{t}\vp(t,x),\partial^{2}_{xx}\vp(t,x))$. Recall that the  formulations in terms of $F^{\epsilon,K}_{\kappa,\pm}$ lead to the same viscosity solutions as \reff{eq: VisEqBdPert} (see Lemma \ref{lem: pertEquiv} in the Appendix).
 This is the formulation to which we apply Perron's method.
In view of a., the functions   $\underline w$ and $\bar w$  are sub- and supersolution of  $F^{\epsilon,K}_{\kappa,-}=0$ and $F^{\epsilon,K}_{\kappa,+}=0$.
Define:
$$\vrmghe:=\sup\{v \in \text{USC}: \underline w\leq v \leq \bar w\; \text{ and $v$ is a subsolution of } F^{\epsilon,K}_{\kappa,-}=0\},
$$
in which USC denotes the class of upper-semicontinuous maps. Then, the upper- (resp.~lower-)semicontinuous envelope
$(\vrmghe)^{\ast}$ (resp.~$(\vrmghe)_{\ast}$) of $\vrmghe$  is a    viscosity subsolution  of $F^{\epsilon,K}_{\kappa,-}[\vp]=0$  (resp.~supersolution of $F^{\epsilon,K}_{\kappa,+}[\vp]=0$) with linear growth, recall the continuity property of Remark \ref{rem: inversion deca kappa} and see   e.g.~\cite[Section 4]{CrandallIshiiLions}.
The comparison result of Theorem \ref{thm: Comp} stated below implies that
$$
(\vrmghe)^{\ast}=(\vrmghe)_{\ast}, \hspace{5mm} \text{on }[0,T]\times\mathbb{R}.
$$
Hence, $\vrmghe$ is a continuous   viscosity solution of \reff{eq: VisEqBdPert}, recall Lemma \ref{lem: pertEquiv}. By construction, it has linear growth. Uniqueness in this class follows from  Theorem \ref{thm: Comp} again.

\noindent \textbf{c.} It remains to prove \reff{eq: VisEqreste au dessus hat g + eps pres T}. For this, we need a control on the behavior of $\vrmghe$ as $t\to T$. It is enough to obtain it for a lower bound $v_{\epsilon,K}$ that we first construct.   Let $\vp$ be a test function such that
$$\text{(strict)}\min_{[0,T)\times\mathbb{R}}(\vrmghe-\vp)=(\vrmghe-\vp)(t_0, x_0)
$$
for some $(t_{0},x_{0})\in [0,T)\x \R$.
By the supersolution property,
$$\min_{x'\in D^{\epsilon}_{\kappa}(x_{0})}\{\bar \gamma(x')-\partial^{2}_{xx}\vp(t_0, x_0)\} \ge 0.
$$
Recalling \reff{eq: H1} and \reff{eq: diff bar gamma f}, this implies   that, for $x'\in D^{\epsilon}_{\kappa}(x_{0})$,
$$1-f(x')\partial^{2}_{xx}\vp(t_0, x_0)\ge \iota f(x') \ge \iota \inf f =: \tilde{\iota}>0.
$$
Using the supersolution property and the above inequalities yields
\b*
0 &\leq& \min_{x'\in D^{\epsilon}_{\kappa}(x_{0})}\left\{-\partial_{t}\vp(t_0, x_0)-\frac{\sigma^{2}(x')\partial^{2}_{xx}\vp(t_0, x_0)}{2(1-f(x')\partial^{2}_{xx}\vp(t_0, x_0))}\right\} \\
&\leq & \min_{x'\in D^{\epsilon}_{\kappa}(x_{0})}\left\{ -\partial_{t}\vp(t_0, x_0)-\frac{\sigma^{2}(x')\big [\partial^{2}_{xx}\vp(t_0, x_0)-\bar \gamma(x_0)\big ]}{2(1-f(x')\partial^{2}_{xx}\vp(t_0, x_0))} \right\} \\
&\leq& -\partial_{t}\vp(t_0, x_0)-\frac{\tilde{\sigma}^{2} \partial^{2}_{xx}\vp(t_0, x_0)}{2\tilde \iota}+\frac{\tilde{\sigma}^{2} \bar \gamma(x_0)}{2\tilde \iota}
\e*
where $\tilde{\sigma} :=\sup\sigma$.

Denote by $v_{\epsilon,K}$  the unique viscosity solution of
\be\label{eq: Regular}
\left\{-\partial_{t}\vp-\frac{\tilde{\sigma}^{2}\partial^{2}_{xx}\vp}{2\tilde \iota}+\frac{\tilde{\sigma}^{2}\bar \gamma}{2\tilde \iota}\right\}\1_{[0,T)}+(\vp-\hat{g}_{K}^{\epsilon})\1_{\{T\}}=0.
\ee
The comparison principle for (\ref{eq: Regular}) and the Feynman-Kac formula imply that
\b*
\vrmghe(t,x) \ge v_{\epsilon,K}(t,x)=\mathbb{E}\left[-\int_{0}^{T-t}\frac{\tilde{\sigma}^{2} \bar \gamma(S^{x}_r)}{2\tilde \iota}\,dr+\hat{g}_{K}^{\epsilon}(S^{x}_{T-t})\right]
\e*
where  $$\,S^{x}=x+\frac{\tilde{\sigma} }{\sqrt{\tilde \iota}}\,W.
$$

It remains to show that \reff{eq: VisEqreste au dessus hat g + eps pres T} holds for $v_{\epsilon,K}$ in place of $\vrmghe$. The argument is standard. Since  $\hat g_{K}$ is uniformly continuous, see Lemma \ref{lem: construction hat gK}, we can find $B^{K}_{\eps}>0$ such that
$$
\left|\hat{g}_{K}^{\epsilon}(S^{x}_{T-t})-\hat{g}_{K}^{\epsilon}(x)\right|\1_{\{|S^{x}_{T-t}-x|\le B^{K}_{\eps}\}}\le \eps
$$
for all $\eps>0$. We now consider the case $|S^{x}_{T-t}-x|> B^{K}_{\eps}$.   Let $C>0$ denote  a generic constant that does not depend on $(t,x)$ but can change from line to line.    Because, $\hat g_{K}$ is  affine on $[x_{K},\infty)$ and on $(-\infty,-x_{K}]$, see Lemma \ref{lem: construction hat gK},
$$
\Esp{\left|\hat{g}_{K}^{\epsilon}(S^{x}_{T-t})-\hat{g}_{K}^{\epsilon}(x)\right|\1_{\{S^{x}_{T-t}\ge x_{K}  \}}}\le C(T-t)^{\frac12}\mbox{ if } x\ge x_{K},
$$
and
$$
\Esp{\left|\hat{g}_{K}^{\epsilon}(S^{x}_{T-t})-\hat{g}_{K}^{\epsilon}(x)\right|\1_{\{S^{x}_{T-t}\le -x_{K}  \}}}\le C(T-t)^{\frac12}\mbox{ if } x\le -x_{K}.
$$
On the other hand, by linear growth of $\hat{g}_{K}^{\epsilon}$, if $x<x_{K}$, then
\begin{align*}
&\Esp{\left|\hat{g}_{K}^{\epsilon}(S^{x}_{T-t})-\hat{g}_{K}^{\epsilon}(x)\right|\1_{\{S^{x}_{T-t}\ge x_{K}\}}\1_{\{|S^{x}_{T-t}-x|\ge B^{K}_{\eps}\}}}
\\
&\le \Esp{\left|\hat{g}_{K}^{\epsilon}(S^{x}_{T-t})-\hat{g}_{K}^{\epsilon}(x)\right|^{2}}^{\frac12}\Pro{|S^{x}_{T-t}-x|\ge |x_{K}-x|\vee B^{K}_{\eps}}^{\frac12}
\\
&\le C\frac{(1+|x|)(T-t)^{\frac12}}{|x_{K}-x|\vee B^{K}_{\eps}}\le \frac{C}{B^{K}_{\eps}}(T-t)^{\frac12}.
\end{align*}
The  (four) remaining cases are treated similarly, and we obtain
$$
\Esp{\left|\hat{g}_{K}^{\epsilon}(S^{x}_{T-t})-\hat{g}_{K}^{\epsilon}(x)\right|}\le  \frac{C}{B^{K}_{\eps}}(T-t)^{\frac12}+\eps.
$$
 Since $\bar \gamma$ is bounded, this  shows that
$$
|v_{\epsilon,K}(t,x)-\hat g_{K}^{\epsilon}(x)|\le  \frac{C}{B^{K}_{\eps}}(T-t)^{\frac12}+\eps
$$
for $t\in [T-1,T]$. Hence the required result for $v_{\epsilon,K}$.
 Since $\vrmghe\ge v_{\epsilon,K}$, this concludes the proof of \reff{eq: VisEqreste au dessus hat g + eps pres T}.
\ep\\

For later use, note that, by stability, $\vrmghe$ converges to a solution of (\ref{eq: VisEqBd}) when $\epsilon \to 0$ and $K\to \infty$.
\begin{proposition}\label{prop: conv bar v eps to bar v}
As $\epsilon \to 0$ and $K\to \infty$, $\vrmghe$ converges to a function $\vrmgh$ that is the unique   viscosity solution of (\ref{eq: VisEqBd}) with linear growth.
\end{proposition}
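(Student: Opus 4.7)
The plan is to apply the Barles--Perthame half-relaxed limits method, exploiting the uniform linear growth established in Proposition \ref{prop:existence} together with the comparison principle of Theorem \ref{thm: Comp}.

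First, I would define the upper and lower relaxed limits
\[
\bar{\rm v}_{\bar\gamma}(t,x):=\limsup_{(\epsilon,K^{-1},t',x')\to (0,0,t,x)} \vrmghe(t',x'),
\quad
\underline{\rm w}(t,x):=\liminf_{(\epsilon,K^{-1},t',x')\to (0,0,t,x)} \vrmghe(t',x').
\]
Proposition \ref{prop:existence} ensures that these are well-defined and, together, share a common linear growth bound that does not depend on $(\epsilon,K)$. By construction $\underline{\rm w}\le \bar{\rm v}_{\bar\gamma}$.

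Next, I would invoke the standard stability of viscosity (sub/super)solutions under half-relaxed limits. The operator $F_\kappa^\epsilon$ converges to $F$ locally uniformly as $\epsilon\to 0$: indeed $D_\kappa^\epsilon(x)$ shrinks to $\{x\}$, and by Remark \ref{rem: inversion deca kappa} the map $F_\kappa^\epsilon$ is jointly continuous, so the inner minimization disappears in the limit. For the terminal data, $\hat g_K^\epsilon=\hat g_K+\epsilon$ converges to $\hat g$ locally uniformly as $\epsilon\to 0$ and $K\to\infty$, since Lemma \ref{lem: construction hat gK} gives $\hat g_K=\hat g$ on $[-K,K]$ and uniform convergence on compacts. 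The stability principle then yields that $\bar{\rm v}_{\bar\gamma}$ is a viscosity subsolution and $\underline{\rm w}$ is a viscosity supersolution of \reff{eq: VisEqBd}, both with linear growth.

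Finally, I would apply the comparison principle of Theorem \ref{thm: Comp} to the pair $(\bar{\rm v}_{\bar\gamma},\underline{\rm w})$ in the linear-growth class, obtaining $\bar{\rm v}_{\bar\gamma}\le\underline{\rm w}$ on $[0,T]\times\R$. Combined with the reverse inequality this forces equality, so the common value is continuous and $\vrmghe\to\vrmgh:=\bar{\rm v}_{\bar\gamma}=\underline{\rm w}$ locally uniformly; moreover $\vrmgh$ is both a sub- and supersolution of \reff{eq: VisEqBd}, hence a viscosity solution with linear growth. Uniqueness in this class is again a direct application of Theorem \ref{thm: Comp}.

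The only delicate point is verifying the convergence of $F_\kappa^\epsilon$ to $F$ on test functions: because $\kappa$ itself depends on $K$ through $\hat g_K$, one must check that this $K$-dependence does not destroy the locally uniform convergence. This is handled by noting that on any compact set in $x$, once $K$ is large enough the relevant values of $\kappa$ are governed by its universal smoothness and bounded-derivative specification in \reff{eq: cond kappa}, so the shrinking of $D_\kappa^\epsilon(x)$ as $\epsilon\to 0$ is uniform on compacts, which is all that is needed to pass to the limit in the viscosity inequalities.
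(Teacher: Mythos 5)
Your proposal is correct and takes the same route as the paper, whose proof simply invokes the uniform linear-growth bound from Proposition \ref{prop:existence} together with Barles' stability theorem \cite[Theorem 4.1]{Bar94} (i.e.\ the half-relaxed limits argument you spell out) and then the comparison principle of Theorem \ref{thm: Comp}. The $K$-dependence of $\kappa$ that you flag is indeed a point the paper leaves implicit, and your resolution of it is the right one.
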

\begin{proof}
The family of functions $\{\vrmghe,  \epsilon \in (0,\epsilon_{ \circ}],K>0\}$ is uniformly bounded by  {a map with linear growth}, see Proposition \ref{prop:existence}. In view of the comparison result of Theorem \ref{thm: Comp} below, it suffices to apply  \cite[Theorem 4.1]{Bar94}.
\ep
\end{proof}

\begin{remark}\label{rem: bound en terme w bar underline} The bounds on $\vrmgh$ can be made explicit, which can be useful to design a numerical scheme, see Section \ref{sec: finite diff scheme} below.    First, as a by-product of the proof of Proposition \ref{prop:existence},   $\vrmghe\ge \inf g$. Passing to the limit as $\epsilon\to 0$ and $K\to \infty$ leads to
$$
 \vrmgh \ge \inf g=:\underline w.
$$
We have also obtained that
$$
\vrmghe \le (\hat g_{K}^{\epsilon_{\circ}}-\tilde  \Gamma)^{\rm conc}+\tilde \Gamma +1+A
$$
in which $x\mapsto \tilde \Gamma(x)=\eta x^{2}/2$ for some $\eta \in (0,\iota \wedge \inf f^{-1}$) with $\iota$ as in \reff{eq: diff bar gamma f},   and
$
A:= T\sup (\sigma^{2}\bar \gamma/[2(1-f\bar \gamma)]).
$
On the other hand, \reff{eq: def hat g_{K}} implies
$$
\hat g_{K}^{\epsilon_{\circ}}\le 1+ (2c_{0}+c_{1}|\cdot|-\bar \Gamma^{\circ})^{\rm conc}+\bar \Gamma^{\circ}
$$
for $\bar \Gamma^{\circ}$ such that $\partial^{2}_{xx} \bar \Gamma^{\circ}=\bar \gamma$.
 Then,
\b*
\vrmghe &\le& \left(1+(2c_{0}+c_{1}|\cdot|-\bar \Gamma^{\circ})^{\rm conc}+\bar \Gamma^{\circ}-\tilde  \Gamma\right)^{\rm conc}+\tilde \Gamma +1+A
\\
&\le& \left(1+(2c_{0}+c_{1}|\cdot|-\tilde  \Gamma)^{\rm conc}+\tilde  \Gamma-\tilde  \Gamma\right)^{\rm conc}+\tilde \Gamma +1+A
\\
&=&  \left(1+2c_{0}+c_{1}|\cdot|-\tilde  \Gamma\right)^{\rm conc}+\tilde \Gamma +1+A =:\bar w
\e*
and
$$
 \vrmgh \le \bar w.
$$
The function $\bar w$ defined above can be computed explicitly by arguing as in Remark \ref{rem: hat g borne}.

Also note   that \reff{eq: def hat g_{K}} and the arguments of Remark \ref{rem: hat g borne} imply that there exists a constant $C>0$ such that
\be\label{eq: vrmgh et hat gk same growth}
\limsup_{|x|\to \infty }|\vrmghe(x)|/ (1+|\hat g_{K}(x)|)\le C, \mbox{ for all $\epsilon\in [0,\epsilon_{\circ}]$ and $K>0$.}
\ee
\end{remark}

\subsubsection{Regularization and verification}

Prior to applying our verification argument, it remains to smooth out the function $\vrmghe$.   This is similar to \cite[Section 3]{bouchard2013stochastic}, but here again the fact that $\hat g$ may not be bounded incurs additional difficulties. In particular, we need to use a kernel with a space dependent  window.

 We first fix a smooth kernel
$$
\psi_{\delta}:=\delta^{-2}\psi(\cdot/\delta)
$$
in which  $\delta>0$  and $\psi \in C^{\infty}_{b}$ is a non-negative function with the closure of its support $[-1, 0] \times [-1,1]$   that integrates to $1$, and such that
\be\label{eq: psi sym}
\int y\psi(\cdot,y)dy=0.
\ee
Let us  set
\be\label{eq: def v smoothe}
\vrmgbed(t,x):=\int_{\R\x \R}  \vrmghe([t']^{+},x')\frac{1}{\kappa(x)}\psi_{\delta}\left(t'-t,\frac{x'-x}{\kappa(x)}\right)dt' dx'.
\ee
We recall that $\kappa$ enters into the definition of $F^{\epsilon}_{\kappa}$ and satisfies \reff{eq: cond kappa}.

The following shows that $\vrmgbed$ is a smooth supersolution of \reff{eq: VisEqBd} with a space gradient admitting bounded derivatives. This is due to the space dependent rescaling of the window by $\kappa$ and will be crucial for our verification arguments.

\begin{proposition}\label{prop: constr v eps K delta par smoothing} For all $0<\epsilon<\epsilon_{\circ}$ and $K>0$ large enough, there exists $\delta_{\circ}>0$ such that $\vrmgbed$ is a $C^{\infty}$ supersolution of \reff{eq: VisEqBd} for all $0<\delta<\delta_{\circ}$. It has linear growth and   $\partial_{x} \vrmgbed$ has bounded derivatives of any order.
\end{proposition}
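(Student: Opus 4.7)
The plan is to adapt the smoothing-by-convolution argument of \cite{bouchard2013stochastic} to the space-dependent window $\kappa$, in four steps.

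\emph{Step 1: regularity and growth.} $C^\infty$ smoothness of $\vrmgbed$ follows by differentiating under the integral sign in \reff{eq: def v smoothe}: the kernel $(t,x)\mapsto \frac{1}{\kappa(x)}\psi_\delta(t'-t,(x'-x)/\kappa(x))$ is $C^\infty$ thanks to $\psi\in C^\infty_b$, $\inf \kappa>0$ and \reff{eq: cond kappa}, with all derivatives in $(t,x)$ supported in a compact $(t',x')$-set that depends continuously on $(t,x)$. Linear growth of $\vrmgbed$ is then inherited from that of $\vrmghe$ (Proposition \ref{prop:existence}), since the convolution spatial window has width $O(\delta\kappa(x))$ and $\kappa$ has linear growth. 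For the boundedness of the derivatives of $\partial_x\vrmgbed$, I would compute each $\partial_x^n\vrmgbed$ by letting all derivatives fall on the smooth kernel and on $\kappa$, then switch back to the scaled variables $(s,y)=((t'-t)/\delta,(x'-x)/(\delta\kappa(x)))$, and observe that every resulting integrand is a product of $\vrmghe([t+\delta s]^+,x+\delta\kappa(x)y)/\kappa(x)^{m}$ (with $m\ge 1$) and a multiplier built from $\psi^{(i,j)}$ and derivatives of $\kappa$, all bounded in view of \reff{eq: cond kappa}. Since $\kappa=|\hat g_K|+1$ outside $[-x_K,x_K]$ and $|\vrmghe|/(1+|\hat g_K|)$ is bounded by \reff{eq: vrmgh et hat gk same growth}, the ratio $|\vrmghe|/\kappa$ is uniformly bounded on $[0,T]\times \R$, which yields the required estimates.

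\emph{Step 2: interior supersolution via a frozen-window comparison.} Fix $(t_0,x_0)\in[\delta,T)\times\R$ and introduce the frozen-window auxiliary function
$\tilde\vp(t,x):=\int\vrmghe([t+\delta s]^+,x+\delta\kappa(x_0)y)\psi(s,y)\,ds\,dy$,
which is a \emph{constant-width} convolution of $\vrmghe$ and agrees with $\vrmgbed$ along $\{x=x_0\}$. For $\delta\le\epsilon$ and every $|y|\le 1$, one directly checks from the definition \reff{eq: def D eps} that $x_0\in D_\kappa^\epsilon(x_0+\delta\kappa(x_0)y)$; combined with the identity $F_\kappa^\epsilon(\cdot,x,\cdot,\cdot)=\min_{x''\in D_\kappa^\epsilon(x)}F(\cdot,x'',\cdot,\cdot)$ and the viscosity supersolution property of $\vrmghe$ for $F_\kappa^\epsilon=0$, this shows that each translate $(t,x)\mapsto\vrmghe(t+\delta s,x+\delta\kappa(x_0)y)$ is a viscosity supersolution of $F(\cdot,x_0,q,M)\ge 0$ at $(t_0,x_0)$. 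The concavity and monotonicity in $(q,M)$ of $F(\cdot,x_0,\cdot,\cdot)$ (Remark \ref{rem: Feps concave}) then permit the Krylov-type mollification argument of \cite[Section 3]{bouchard2013stochastic} to go through on this constant-width convolution, yielding that the smooth function $\tilde\vp$ is a \emph{classical} supersolution of $F(\cdot,x_0,\cdot,\cdot)=0$ at $(t_0,x_0)$.

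\emph{Step 3: transfer to $\vrmgbed$ and terminal condition.} Since $\vrmgbed(\cdot,x_0)=\tilde\vp(\cdot,x_0)$, one has $\partial_t\vrmgbed(t_0,x_0)=\partial_t\tilde\vp(t_0,x_0)$, while direct differentiation using the chain rule through $\kappa(x)$ shows that $\partial^2_{xx}\vrmgbed(t_0,x_0)-\partial^2_{xx}\tilde\vp(t_0,x_0)$ is a linear combination of terms of the form $(\kappa'(x_0)/\kappa(x_0))\int\vrmghe(\ldots)\partial_y(y\psi)\,ds\,dy$ together with $O(\delta)$-terms involving $\partial_x\vrmgbed(t_0,x_0)$ and $\partial^2_{xx}\kappa(x_0)$, all controlled by Step 1; the symmetry condition \reff{eq: psi sym} is crucial here to annihilate the leading first-moment contributions. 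Combined with the monotonicity of $F$ in $M$ on $\{M\le\bar\gamma<1/f\}$ (see \reff{eq: diff bar gamma f}), the classical supersolution inequality for $\tilde\vp$ transfers to $\vrmgbed$ provided $\delta<\delta_\circ$ is small enough. For the terminal inequality $\vrmgbed(T,\cdot)\ge\hat g$, note that for $\delta<c_\epsilon^K$ the temporal window $[T-\delta,T]$ lies in the region where $\vrmghe\ge\hat g_K+\epsilon/2$ by \reff{eq: VisEqreste au dessus hat g + eps pres T}, so $\vrmgbed(T,x)\ge\int\hat g_K(x+\delta\kappa(x)y)\psi(s,y)\,ds\,dy+\epsilon/2$. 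Since $\hat g_K$ is affine on $(-\infty,-x_K]\cup[x_K,\infty)$ and uniformly continuous (Lemma \ref{lem: construction hat gK}), and \reff{eq: psi sym} kills the affine contribution, the averaged term differs from $\hat g_K(x)$ by $O(\delta)$ uniformly in $x$, giving $\vrmgbed(T,x)\ge\hat g_K(x)+\epsilon/4\ge\hat g(x)$ for small enough $\delta$.

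The main obstacle is the precise bookkeeping in Step 3, namely showing that the correction terms arising from differentiating through the space-dependent $\kappa(x)$ do not spoil the supersolution inequality obtained for $\tilde\vp$. The space-dependent window is exactly the adaptation forced by the possibly unbounded terminal condition $\hat g$ (absent in \cite{bouchard2013stochastic}): the conditions $\inf\kappa>0$, $|\partial_x\kappa|<1/\epsilon_\circ$ and the uniform bound on $|\vrmghe|/\kappa$ coming from \reff{eq: vrmgh et hat gk same growth} are tailored to keep these corrections under control.
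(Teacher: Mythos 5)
Your plan captures the paper's strategy at a structural level (Krylov-type mollification of the shaken solution, treatment of the $\kappa$-dependent convolution window, near-$T$ estimate for the terminal condition), and Steps 1 and the terminal-condition half of Step 3 match Part a and Part c of the paper's proof. The main point of departure is Steps 2--3, and you are right to flag the bookkeeping there as the obstacle, but the proposed resolution is not the paper's and does not evidently close the gap.

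The paper does not compare $\vrmgbed$ to a frozen-window auxiliary $\tilde\vp$ at each base point. Instead, it works with the inf-convolution $v_{\ell,k}$ of the spatially truncated $\vrmghe$; uses the $\epsilon$-window shrink $D^{\epsilon/2}_\kappa(x)\subset D^{\epsilon}_\kappa(\hat x_{\ell,k})$ to absorb the Moreau--Yosida translation; obtains a pointwise a.e.\ inequality for the absolutely continuous part of $\partial^{2}_{xx}v_{\ell,k}$ via Jensen--Ishii; integrates against the $\kappa$-scaled kernel using concavity, monotonicity and non-positivity of the singular part; and then passes to the limit $k,\ell\to\infty$. In particular, \reff{eq: psi sym} is used in the paper \emph{only} for the terminal condition (to kill the affine contribution of $\hat g_K$ outside $[-x_K,x_K]$), not in the interior-supersolution step.

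Concretely, the mechanism you invoke in Step 3 does not work as stated. First, the term $(\kappa'/\kappa)\int\vrmghe\,\partial_y(y\psi)$ that you identify is the correction to the \emph{first} spatial derivative, not the second; for $\partial^2_{xx}$ one gets additional factors of $(\delta\kappa)^{-1}$ from the kernel scaling plus terms involving $\kappa''$, and the correction is not visibly $O(\delta)$ uniformly (the function being convolved is only continuous; even for the inf-convolution $v_{\ell,k}$ the distributional $\partial^2_{xx}$ is bounded above but not below, so the $2\delta\kappa'\int\partial^2_{xx}v_{\ell,k}\,y\psi$ piece is not sign-controlled). Second, the moment condition \reff{eq: psi sym} says $\int y\psi\,dy=0$; it does not make $\int\vrmghe(\cdots)\partial_y(y\psi)\,dy$ small unless $\vrmghe$ is locally affine (and $\int\partial_y(y\psi)\,dy=0$ already holds from compact support, independently of \reff{eq: psi sym}). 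Third, even if the correction were $O(\delta)$, the monotonicity of $F$ in $M$ only lets you pass from $\tilde\vp$ to $\vrmgbed$ if $\partial^2_{xx}\vrmgbed\le\partial^2_{xx}\tilde\vp$ at the contact point; a two-sided $O(\delta)$ error requires a strict slack in the supersolution inequality, which your Step 2 establishes only as a weak inequality. The $\epsilon$-shaking built into $F^\epsilon_\kappa$ is precisely where such slack is supposed to come from --- this is what the paper's $D^{\epsilon}_\kappa\to D^{\epsilon/2}_\kappa$ shrink exploits --- but your argument does not extract it.

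So the structure is sound and the difficulty is correctly diagnosed, but Step 3 as written is a genuine gap: it attributes the control of the window-derivative corrections to \reff{eq: psi sym} instead of to the $\epsilon$-perturbation budget and the inf-convolution regularity, and it does not supply the sign or smallness of the correction needed to conclude via monotonicity.
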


\proof \textbf{a.} It follows from  \reff{eq: cond kappa} and \reff{eq: vrmgh et hat gk same growth} that
\b*
\limsup_{|x|\to \infty }|\vrmghe(x)|/ (1+|\kappa(x)|)<\infty.
\e*
Direct computations and \reff{eq: cond kappa} then show that $\vrmgbed$ has linear growth and that all derivatives of $\partial_{x} \vrmgbed$ are uniformly bounded.
\\
 \textbf{b.} We now prove the supersolution property inside the parabolic domain. Since the proof is very close to that of  \cite[Theorem 3.3]{bouchard2013stochastic}, we only provide the   arguments that require to be adapted, and refer to their proof for other elementary details. Fix $\ell>0$ and  set
$$
v_{\ell}(t,x):=\vrmgbed(t,(-\ell)\vee x\wedge \ell).
$$
We omit the superscripts that are superfluous in this proof.
Given $k\ge 1$, set
$$
v_{\ell,k}(z):=\inf_{z'\in  [0,T]\x\R} \left(v_{\ell}(z')+k|z-z'|^{2}\right).
$$
 Since $v_{\ell}$ is bounded and continuous, the infimum in the above is achieved by a point  $\hat z_{\ell,k}(z)=(\hat t_{\ell,k}(z),\hat x_{\ell,k}(z))$, and   $v_{\ell,k}$ is bounded, uniformly in $k\ge 1$. This implies that  we can find $C_{\ell}>0$, independent of $k$, such that
\be\label{eq: def rho k}
|z-\hat z_{\ell,k}(z)|^{2}\le C_{\ell}/k=:(\rho_{\ell,k})^{2}.
\ee
Moreover, a simple change of variables argument shows that, if $\vp$ is a smooth function such that $v_{\ell,k}-\vp$ achieves a minimum at $z\in [0,T)\x (-\ell,\ell)$, then
$$
(\partial_{t}\vp,\partial_{x}\vp,\partial^{2}_{xx}\vp)(z)\in \bar \Pc^{-} v_{\ell}(\hat z_{\ell,k}(z)),
$$
where  $\bar \Pc^{-} v_{\ell}(\hat z_{\ell,k}(z))$ denotes the \emph{closed} parabolic subjet of $v_{\ell}$ at $\hat z_{\ell,k}(z)$; see e.g.~\cite{CrandallIshiiLions} for the definition. Then, Proposition \ref{prop:existence} implies that $v_{\ell,k}$ is a supersolution of
\b*
\min_{x'\in D^{\epsilon}_{\kappa}(\hat x_{\ell,k} )} \min\left\{-\partial_{t}\vp-\frac{\sigma^{2}(x')\partial^{2}_{xx}\vp}{2(1-f(x')\partial^{2}_{xx}\vp)}, \bar \gamma(x')-\partial^{2}_{xx}\vp\right\} \ge 0
\e*
on $[\rho_{\ell,k},T-\rho_{\ell,k})\x (-\ell+\rho_{\ell,k},\ell-\rho_{\ell,k})$. We next deduce from \reff{eq: def rho k} that  $x'\in D^{\epsilon/2}_{\kappa}(x)$  implies
\b*
-\frac{\epsilon}{2} \kappa(x')-C_{\ell}/k^{\frac12}\le \hat x_{\ell,k}(t,x)-x'\le \frac{\epsilon}{2} \kappa(x')+C_{\ell}/k^{\frac12}.
\e*
 Since $\inf \kappa>0$, this shows that $x'\in D^{\epsilon}_{\kappa}(\hat x_{\ell,k}(t,x) )$ for $k$ large enough with respect to $\ell$. Hence, $v_{\ell,k}$ is a supersolution of
\b*
\min_{x'\in D^{\epsilon/2}_{\kappa}} \min\left\{-\partial_{t}\vp-\frac{\sigma^{2}(x')\partial^{2}_{xx}\vp}{2(1-f(x')\partial^{2}_{xx}\vp)}, \bar \gamma(x')-\partial^{2}_{xx}\vp\right\} \ge 0
\e*
on $[\rho_{\ell,k},T-\rho_{\ell,k})\x (-\ell+\rho_{\ell,k},\ell-\rho_{\ell,k})$.

We now argue as in \cite{ishii1995equivalence}. Since $v_{\ell,k}$ is semi-concave, there exist $\partial_{xx}^{2,abs}v_{\ell,k} \in L^{1}$ and  a Lebesgue-singular negative Radon measure $ \partial_{xx}^{2,sing} v_{\ell,k}$ such that
$$
\partial^{2}_{xx}v_{\ell,k}(dz)=\partial_{xx}^{2,abs}v_{\ell,k}(z)dz+\partial_{xx}^{2,sing} v_{\ell,k}(dz)\mbox{ in the distribution sense}
$$
and
$$
(\partial_{t} v_{\ell,k},\partial_{x}v_{\ell,k},\partial_{xx}^{2,abs}v_{\ell,k})\in \bar \Pc^{-}v_{\ell,k} \mbox{ a.e. on } [\rho_{k},T-\rho_{k}]\x (-\ell+\rho_{\ell,k},\ell-\rho_{\ell,k}),$$
 see \cite[Section 3]{jensen1988maximum}.
Hence, the above implies that
\begin{equation*}
\min_{x'\in D^{\epsilon/2}_{\kappa} } \min\left\{-\partial_{t}v_{\ell,k}-\frac{\sigma^{2}(x')\partial_{xx}^{2,abs}v_{\ell,k}}{2(1-f(x')\partial_{xx}^{2,abs}v_{\ell,k})}, \bar \gamma(x')-\partial_{xx}^{2,abs}v_{\ell,k}\right\} \ge 0
\end{equation*}
a.e.~on  $[\rho_{\ell,k},T-\rho_{\ell,k})\x (-\ell+\rho_{\ell,k},\ell-\rho_{\ell,k})$, or equivalently, by \reff{eq: def D eps},
\begin{equation*}
 \min\left\{-\partial_{t}v_{\ell,k} -\frac{\sigma^{2}(x)\partial_{xx}^{2,abs}v_{\ell,k}}{2(1-f(x)\partial_{xx}^{2,abs}v_{\ell,k})}, \bar \gamma(x ) -\partial_{xx}^{2,abs}v_{\ell,k} \right\}(t',x') \ge 0
\end{equation*}
for all $x$  and for a.e.~$(t',x')\in [\rho_{\ell,k},T-\rho_{\ell,k})\x(-\ell+\rho_{\ell,k},\ell-\rho_{\ell,k})$ such that $ 2|x'-x|\le \epsilon\kappa(x)$.
Take $0<\delta<\eps/2$. Integrating the previous inequality with respect to $(t',x')$ with the kernel function  $\psi_{\delta}(\cdot,\cdot/\kappa)/\kappa$, using the concavity and monotonicity property of Remark \ref{rem: Feps concave} and the fact that $\partial_{xx}^{2,sing} v_{\ell,k}$ is non-positive, we obtain
\begin{equation}\label{eq: pde bar v k ae}
 \min\left\{-\partial_{t}v^{\delta}_{\ell,k}-\frac{\sigma^{2} \partial_{xx}^{2}v^{\delta}_{\ell,k}}{2(1-f \partial_{xx}^{2}v^{\delta}_{\ell,k})}, \bar \gamma -\partial_{xx}^{2}v^{\delta}_{\ell,k}\right\} \ge 0
\end{equation}
on  $[\rho_{\ell,k}+\delta,T-\rho_{\ell,k})\x(-  x^{-}_{\ell,k},  x^{+}_{\ell,k})$, in which
$$
v^{\delta}_{\ell,k}(t,x):=\int_{\R\x \R}  v_{\ell,k}([t']^{+},x')\frac{1}{\kappa(x)}\psi_{\delta}\left(t'-\cdot,\frac{x'-\cdot}{\kappa(x)}\right)dt' dx'
$$
and
$$
  x^{+}_{\ell,k}+\frac{\delta}{2} \kappa(  x^{+}_{\ell,k})= \ell-\rho_{\ell,k}\mbox{ and }   -x^{-}_{\ell,k}-\frac{\delta}{2} \kappa(-  x^{-}_{\ell,k})=- \ell+\rho_{\ell,k}.
$$
The above are well defined, see Remark \ref{rem: inversion deca kappa}.
By Remark \ref{rem: inversion deca kappa} and \reff{eq: def rho k}, $   \pm x^{\pm}_{\ell,k}\to \pm\infty$ and $\rho_{\ell,k}\to 0$ as $k\to \infty$ and then $\ell \to \infty$. Moreover, $v^{\delta}_{\ell,k}\to \vrmgbed$ as $k\to \infty$ and then $\ell \to \infty$, and the derivatives also converge. Hence, \reff{eq: pde bar v k ae} implies  that $\vrmgbed$ is a supersolution of \reff{eq: VisEqBd} on $[\delta,T)\x \R$.
\\
 \textbf{c.} We conclude by discussing the boundary condition at $T$.   We know from Proposition \ref{prop:existence} that
 \b*
\vrmghe \ge \hat{g}_{K}+\epsilon/2, \hspace{5mm} \text{ on } [T-c^{K}_{\epsilon}, T] \times\mathbb{R}.
\e*
Since $\hat g$ is uniformly continuous, see \reff{eq: hyp hat g}, so is $\hat{g}_{K}$, and therefore $\vrmgbed(T,\cdot)\ge \hat{g}_{K}$ on the compact set $[-2x_{K} ,2 x_{K}]$ for $\delta>0$ small enough with respect to $\epsilon$, see Lemma \ref{lem: construction hat gK} for the definition of $x_{K}\ge K$. Now observe that $x\ge 2 x_{K}$ and $|x'-x|\le \delta \kappa(x)$  imply that $x'\ge 2 x_{K}(1-\delta c^{K}_{1})-\delta c^{K}_{0}$ in which $c^{K}_{1}$ and $c^{K}_{0}$ are constants. This actually follows from the affine behavior of $\kappa$ on   $[x_{K},\infty)$, see  \reff{eq: cond kappa} and Lemma  \ref{lem: construction hat gK}. For $\delta$ small enough, we then obtain $x'\ge  x_{K}$. Since  $\hat{g}_{K}$ is affine on $[x_{K},\infty)$, and since $\psi$ is symmetric in its second argument, see \reff{eq: psi sym}, it follows that
 $$
\vrmgbed(T,x)\ge \int_{\R\x \R}  \hat{g}_{K}(x')\frac{1}{\kappa(x)}\psi_{\delta}\left(t'-T,\frac{x'-x}{\kappa(x)}\right)dt' dx'=\hat{g}_{K}(x)
 $$
 for all $x\ge 2x_{K}$. This also holds for $x\le -2x_{K}$, by the same arguments.
\ep\\

We can now use a verification argument and provide the main result of this section.

\begin{theorem}\label{thm:Reg} Let $\vrmgh$  be defined as in Proposition \ref{prop: conv bar v eps to bar v}. It has linear growth. Moreover,  $\vrmgh\ge \vrmg$ on $[0,T]\times \mathbb{R}$.
\end{theorem}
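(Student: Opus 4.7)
The linear growth of $\vrmgh$ is immediate from Proposition \ref{prop: conv bar v eps to bar v}: the family $\{\vrmghe\}$ has a uniform linear growth bound (Proposition \ref{prop:existence} and Remark \ref{rem: bound en terme w bar underline}), hence so does the limit. The heart of the statement is the inequality $\vrmgh\ge\vrmg$, which I would obtain via a standard verification argument applied to the smooth supersolutions $\vrmgbed$ of Proposition \ref{prop: constr v eps K delta par smoothing}, followed by passing to the limit $\delta\downarrow 0$, $\epsilon\downarrow 0$, $K\uparrow \infty$.

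Fix $(t,x)\in[0,T)\times\R$ together with $0<\delta<t$, $0<\epsilon<\epsilon_\circ$ and $K$ large, so that $\vrmgbed$ is a smooth supersolution of \reff{eq: VisEqBd} on $[\delta,T]\times\R$. Set $y:=\partial_x\vrmgbed(t,x)$, $c:=\vrmgbed(t,x)-yx$, and define the candidate controls $(a,b)$ implicitly along the trajectory of $X=X^{t,x,\phi}$ by imposing $Y_s=\partial_x\vrmgbed(s,X_s)$, namely
\begin{align*}
a_s=\frac{\sigma(X_s)\,\partial^2_{xx}\vrmgbed(s,X_s)}{1-f(X_s)\,\partial^2_{xx}\vrmgbed(s,X_s)},
\end{align*}
with $b_s$ determined by matching, through It\^o's formula, the drift of $Y$ to that of $\partial_x\vrmgbed(\cdot,X_\cdot)$. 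This is well posed because the supersolution property gives $\partial^2_{xx}\vrmgbed\le\bar\gamma\le 1/f-\iota$, which makes the denominator bounded away from $0$. Moreover $\gamma^{a_s}_Y(X_s)=\partial^2_{xx}\vrmgbed(s,X_s)\le\bar\gamma(X_s)$, so the gamma constraint \reff{eq: contrainte gamma} is automatically met (the lower bound on $\gamma^a_Y$ is obtained analogously, picking $k$ large enough). Bounds on $(a,b,\alpha,\beta)$ together with the required conditional uniform continuity estimate follow because, by Proposition \ref{prop: constr v eps K delta par smoothing}, $\partial_x\vrmgbed$ has bounded derivatives of every order, and $\mu,\sigma,f\in C^2_b$; hence $(a,b)\in\Ac_{\bar\gamma}(t,x)$.

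With this strategy, applying It\^o to $\vrmgbed(\cdot,X)$ and comparing with the dynamics \reff{eq: V lim conti} for $V=V^{t,x,v,\phi}$ starting from $v:=\vrmgbed(t,x)=c+yx$, the stochastic integrals cancel (because $Y_s=\partial_x\vrmgbed(s,X_s)$) and one is left with
\begin{align*}
d(V_s-\vrmgbed(s,X_s))=\Bigl[-\partial_t\vrmgbed-\tfrac12\,\tfrac{\sigma^2\,\partial^2_{xx}\vrmgbed}{1-f\,\partial^2_{xx}\vrmgbed}\Bigr](s,X_s)\,ds\ge 0,
\end{align*}
thanks to the supersolution property. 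Since $\vrmgbed(T,\cdot)\ge\hat g_K\ge\hat g\ge g$, this yields $V_T\ge g(X_T)$, so $(a,b)\in\Gcg(t,x,v,y)$ and therefore $\vrmg(t,x)\le v=\vrmgbed(t,x)$. Sending $\delta\downarrow 0$ (pointwise convergence of the mollification to the continuous function $\vrmghe$) gives $\vrmg(t,x)\le\vrmghe(t,x)$; then sending $\epsilon\downarrow 0$ and $K\uparrow\infty$ via Proposition \ref{prop: conv bar v eps to bar v} yields $\vrmg(t,x)\le\vrmgh(t,x)$ for every $(t,x)\in[0,T)\times\R$. At $t=T$, $\vrmg(T,\cdot)=g\le\hat g\le\vrmgh(T,\cdot)$ from the boundary condition in \reff{eq: VisEqBd}.

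The most delicate step is the admissibility check: one must verify that the feedback control $(a_s,b_s)$, together with its It\^o decomposition $(\alpha_s,\beta_s)$, lies in some $\Ac^\circ_k$ uniformly in $s$. This hinges crucially on the bounded derivatives of $\partial_x\vrmgbed$ provided by the space-dependent kernel in Proposition \ref{prop: constr v eps K delta par smoothing}: without those bounds, the conditional Lipschitz-in-expectation estimate defining $\Ac^\circ_k$ could fail. Once admissibility is secured the verification is routine, and the limiting procedure produces no new difficulty since each inequality $\vrmg\le\vrmgbed$ is pointwise and independent of the admissibility constant $k$.
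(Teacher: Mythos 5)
Your proof follows essentially the same verification argument as the paper: set $y=\partial_x\vrmgbed(t,x)$, drive $Y$ along $\partial_x\vrmgbed(\cdot,X)$ via the feedback controls $(\hat a,\hat b)$ (with $\hat a=\sigma\partial^2_{xx}\vrmgbed/(1-f\partial^2_{xx}\vrmgbed)$), use the supersolution property of $\vrmgbed$ to get both the gamma constraint and the drift inequality $d(V-\vrmgbed(\cdot,X))\ge 0$, then send $\delta\downarrow 0$, $\epsilon\downarrow 0$, $K\uparrow\infty$; your algebraic form of the residual drift, $-\partial_t\vrmgbed-\tfrac12\sigma^2\partial^2_{xx}\vrmgbed/(1-f\partial^2_{xx}\vrmgbed)$, is the same quantity the paper bounds below by $0$. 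You also correctly flag the admissibility check (that $(\hat a,\hat b,\alpha,\beta)$ lies in some $\Ac^\circ_k$) as the delicate point and attribute it to the bounded derivatives of $\partial_x\vrmgbed$, exactly as the paper does. The only cosmetic slip is in the final sentence: the paper's convention is that $\vrmg(T,\cdot)$ denotes the left limit $\lim_{t'\uparrow T}\vrmg(t',\cdot)$, so the inequality at $t=T$ should be obtained by letting $t'\uparrow T$ in $\vrmg(t',x')\le\vrmgh(t',x')$ and using continuity of $\vrmgh$, rather than from $\vrmg(T,\cdot)=g$.
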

\begin{proof} The linear growth property has already been stated in Proposition \ref{prop: conv bar v eps to bar v}.
We now show that    $\vrmgh\ge \vrmg$ by  applying a verification argument to $\vrmgbed$. From now on $0<\epsilon\le \epsilon_{\circ}$ in which $ \epsilon_{\circ}$ is as in \reff{eq: cond kappa}. The parameters $K,\delta >0$ are chosen as in Proposition \ref{prop: constr v eps K delta par smoothing}.

  Fix $(t,x)\in (0,T)\x \R$ and $\delta \in (0,t\wedge \epsilon)$.  Let $(X,Y,V)$ be defined as in \reff{eq: S lim conti}-\reff{eq: def Y sans saut}-\reff{eq: V lim conti} with  $(x,\partial_{x}\vrmgbed(t,x),\vrmgbed(t,x) -\partial_{x}\vrmgbed(t,x) x)$ as initial condition at $t$, and for the Markovian controls
\begin{align*}
\hat a&=\left(\frac{\sigma \partial^{2}_{xx}\vrmgbed }{1-f \partial^{2}_{xx}\vrmgbed }\right)(\cdot,X)\\
\hat b&=\left(\frac{\partial^{2}_{tx}\vrmgbed+\partial^{2}_{xx}\vrmgbed (\mu+\hat a\sigma f')+\frac12\partial^{3}_{xxx}\vrmgbed(\sigma+\hat af)^{2}}{1-f\partial^{2}_{xx}\vrmgbed}\right)(\cdot,X).
\end{align*}
By definition of $F$, \reff{eq: diff bar gamma f} and \reff{eq: H1}, the above is well-defined as the denominators are always bigger than $\inf f \iota>0$. All the involved functions being bounded  and Lipschitz, see Proposition \ref{prop: constr v eps K delta par smoothing},  it is easy to check that a solution to the corresponding stochastic differential equation exists, and  that   $(\hat a,\hat b)\in \Ac^{\circ}$. Direct computations then show that $Y=\partial_{x}\vrmgbed(\cdot,X)$. Moreover, the fact that $\vrmgbed$ is a  supersolution of $F[\vp]=0$ on $[t,T]\x \R$ ensures that {the gamma constraint \reff{eq: contrainte gamma} holds}, for some $k\ge 1$,  and that
 \b*
 -\partial_{t}\vrmgbed(\cdot,X) - \frac12   \sigma(X) \hat a\ge 0
\;\mbox{ on } [t,T).
 \e*
The last inequality combined with the definition of $\hat a$ implies
\begin{align*}
\frac{1}{2}f(X)\hat a^{2}&\ge
\partial_{t}\vrmgbed(\cdot,X) + \frac12 (\sigma(X)+f(X)\hat a)\hat a\\
&=\partial_{t}\vrmgbed(\cdot,X) + \frac12 (\sigma_{X}^{\hat a}(X))^{2} \partial^{2}_{xx}\vrmgbed(\cdot,X) \;\mbox{ on } [t,T).
\end{align*}

Hence,
\b*
V_T &=& \vrmgbed(t,x)+\frac 12 \int_{t}^{T}f(X_u)\hat a_{u}^{2}\,du +\int_{t}^{T}\partial_{x}\vrmgbed(u,X_u)\,dX_u \\
&\ge & \vrmgbed(t,x)+\ \int_{t}^{T}d \vrmgbed(u,X_u) \\
&=&\vrmgbed(T,X_T) \ge g(X_T),
\e*
in which the last inequality follows from Proposition \ref{prop: constr v eps K delta par smoothing}  again.

 It remains to pass to the limit $\delta,\epsilon\to 0$. By Proposition \ref{prop:existence}, $ \vrmghe$ is continuous, so that $\vrmgbed$ converges pointwise to $\vrmghe$ as $\delta\to 0$. By  Proposition \ref{prop: conv bar v eps to bar v},  $\vrmghe$ converges pointwise to $\vrmgh$  as $\epsilon \to 0$ and $K \to \infty$. In view of the above this implies the required result: $\vrmgh\ge \vrmg$.
\ep
\end{proof}

\begin{remark}\label{rem: borne sup vk par verification} Note that, in the above proof, we have constructed a super-hedging strategy in $\Ac_{k,\bar \gamma}(t,x)$ and starting with $|Y_{t}|\le k$, for some $k\ge 1$ which can be chosen in a uniform way with respect to $(t,x)$, while $\vrmgbed$ has linear growth.
\end{remark}

\subsubsection{Comparison principle}\label{sec: comp}

We provide here  the comparison principle that was used several times in the above.
Before stating it, let us make the following observation, based on direct computations. Recall \reff{eq: H1} and \reff{eq: diff bar gamma f}.

\begin{proposition}\label{prop: regu terme fully non linear} Fix $\rho>0$. Consider the map
$$
(t,x,M)\in[0,T]\x \R\x \R \mapsto\Psi(t,x,M) =\frac{\sigma^{2}(x)M}{2(e^{\rho t}-f(x)M)}.
$$
Then, $M\mapsto  \Psi(t,x,M)$ is continuous, uniformly in $(t,x)$, on
$$
O:=\{(t,x,M)\in [0,T]\x \R\x \R :  M\le e^{\rho t}\bar  \gamma(x)\}.
$$
Moreover, there exists $L>0$ such that  $x\mapsto  \Psi(t,x,M)$ is $L$-Lipschitz on  $O$.
\end{proposition}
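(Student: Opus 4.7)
The plan is to reduce both assertions to two elementary bounds available on $O$: (i) the denominator $D(t,x,M):=e^{\rho t}-f(x)M$ is bounded below by a positive constant, and (ii) $|M|/D(t,x,M)$ is bounded. For (i), on $O$ we have $f(x)M \le e^{\rho t}f(x)\bar\gamma(x) \le e^{\rho t}(1-\iota f(x))$ by \reff{eq: diff bar gamma f}, whence $D(t,x,M) \ge e^{\rho t}\iota f(x) \ge \iota \inf f>0$ using \reff{eq: H1}. For (ii), when $M \le 0$ one has $D \ge f(x)|M|$ so $|M|/D \le 1/\inf f$, while for $M \in [0,e^{\rho t}\bar\gamma(x)]$ the bound from (i) combined with the boundedness of $\bar\gamma$ gives $|M|/D \le \sup\bar\gamma/(\iota \inf f)$. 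Denote the resulting uniform bound by $C_1$.

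For the continuity of $M \mapsto \Psi(t,x,M)$ uniformly in $(t,x)$, I would compute directly
\[ \partial_M \Psi(t,x,M)=\frac{\sigma^2(x)\,e^{\rho t}}{2\,D(t,x,M)^2}, \]
and note that, by (i) and \reff{eq: H1}, this quantity is bounded on $O$ by a constant depending only on $T$, $\rho$, $\sup \sigma$, $\inf f$ and $\iota$. Thus $M \mapsto \Psi(t,x,M)$ is globally Lipschitz on $O$, uniformly in $(t,x)$, and a fortiori uniformly continuous uniformly in $(t,x)$.

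For the $x$-Lipschitz property, fix $(t,x,M),(t,x',M) \in O$, write $D:=D(t,x,M)$ and $D':=D(t,x',M)$, and expand
\[
\Psi(t,x,M)-\Psi(t,x',M) = \frac{M}{2DD'}\bigl[\sigma^2(x)D'-\sigma^2(x')D\bigr].
\]
Rearranging the bracket yields
\[ e^{\rho t}(\sigma^2(x)-\sigma^2(x')) + M\bigl[\sigma^2(x')(f(x)-f(x')) + f(x')(\sigma^2(x')-\sigma^2(x))\bigr]. \]
Since $\sigma,f \in C^2_b$ by \reff{eq: H1}, there exists $K>0$ such that both $\sigma^2$ and $f$ are $K$-Lipschitz, so the bracket is bounded by $K\bigl(e^{\rho T}+|M|(\sup f + \sup\sigma^2)\bigr)|x-x'|$. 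Dividing by $2DD'$ and using (ii) to bound $|M|/(DD') \le C_1/(\iota\inf f)$ together with $|M|^2/(DD') = (|M|/D)(|M|/D') \le C_1^2$ gives the desired uniform Lipschitz constant $L$.

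The only genuine subtlety is the unboundedness of $M$ below on $O$: the naive bound $|\partial_x\Psi| \lesssim (|M|+|M|^2)/D^2$ would grow with $|M|$ if one used only the lower bound on $D$ from (i), so the key step is to pair each factor of $M$ in the numerator with one copy of $D$ in the denominator using (ii), thereby absorbing the $|M|$-dependence and producing a truly uniform constant.
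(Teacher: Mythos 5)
Your proof is correct and fills in exactly the ``direct computations'' the paper alludes to without spelling out: lower bound on the denominator from \reff{eq: diff bar gamma f}, uniform bound on $|M|/D$ by splitting the sign of $M$, the identity $\partial_M\Psi=\sigma^2e^{\rho t}/(2D^2)$, and pairing each power of $M$ with a copy of $D$ in the $x$-difference. Two cosmetic slips worth fixing: the bound for $M\ge 0$ should carry an $e^{\rho T}$ factor ($|M|/D\le e^{\rho T}\sup\bar\gamma/(\iota\inf f)$), and \reff{eq: H1} only gives $\sigma$ Lipschitz and bounded, not $C^2_b$ — which still yields the Lipschitz constant for $\sigma^2$ you need.
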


\begin{theorem}\label{thm: Comp} Fix  $\epsilon\in  [0,\epsilon_{\circ}]$.
Let $U$ (resp.~$V$) be a upper semicontinuous viscosity subsolution (resp.~lower semicontinuous supersolution) of $F_{\kappa}^{\epsilon}=0$ on $[0,T)\x \R$. Assume that $U$ and $V$ have linear growth and that  $U\le V$ on $\{T\}\x \R$, then   $U\leq V$ on $[0,T] \times \mathbb{R}$.
\end{theorem}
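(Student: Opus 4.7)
My plan follows the Ishii-Crandall-Lions comparison scheme, with two adaptations tailored to this operator: a time rescaling that invokes Proposition~\ref{prop: regu terme fully non linear} to gain Lipschitz-in-$x$ control of the fully non-linear term, and a super-linear spatial penalization to handle the unbounded domain and the linear growth of $U,V$.

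First, I would change unknown to $u:=e^{\rho t}U$ and $v:=e^{\rho t}V$ for a fixed $\rho>0$. A direct computation, multiplying the inequality $F_\kappa^\epsilon[\cdot]\lessgtr 0$ by $e^{\rho t}$, shows that $u$ (resp.~$v$) becomes a viscosity sub- (resp.~super-) solution of
\begin{equation*}
\min_{x'\in D^\eps_\kappa(x)}\min\Bigl\{-\partial_t\vp + \rho\vp - \Psi(t,x',\partial^{2}_{xx}\vp),\;e^{\rho t}\bar\gamma(x')-\partial^{2}_{xx}\vp\Bigr\}=0,
\end{equation*}
with $\Psi$ as in Proposition~\ref{prop: regu terme fully non linear}. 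The newly generated $\rho\vp$ term is the strict monotonicity in $r$ which the original operator is lacking. Both $u,v$ have linear growth and $u\le v$ on $\{T\}\times\R$.

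Assume for contradiction that $M_\star:=\sup_{[0,T]\times\R}(u-v)>0$. For a small parameter $\eta>0$ I would double the variables with a super-linearly growing penalty,
$$\Phi_{n,\eta}(t,s,x,y):=u(t,x)-v(s,y)-\tfrac{n}{2}\bigl((t-s)^2+(x-y)^2\bigr)-\eta(1+x^2+y^2),$$
whose supremum, for $\eta$ small, is positive and attained at some $(t_n,s_n,x_n,y_n)$. Standard arguments give $|t_n-s_n|+|x_n-y_n|\to 0$ and $(t_n,x_n),(s_n,y_n)\to(\hat t,\hat x)$ with $\hat t<T$. The Crandall-Ishii lemma then furnishes parabolic sub-/superjets with common time derivative $a_n=n(t_n-s_n)$ and Hessians $X_n,Y_n$ satisfying $X_n\le Y_n+C\eta$ after $\alpha\to\infty$ in the matrix inequality. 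A key point: the supersolution's $\gamma$-argument forces $Y_n\le e^{\rho s_n}\bar\gamma(y')$ for every $y'\in D^\eps_\kappa(y_n)$, and combined with the Ishii bound and the strict separation $\bar\gamma\le 1/f-\iota$ from \reff{eq: diff bar gamma f}, both $(t_n,x',X_n)$ and $(s_n,y',Y_n)$ lie, for $\eta$ small and $n$ large, in a slightly enlarged domain $\{M\le e^{\rho t}(\bar\gamma(x)+\delta)\}$ on which Proposition~\ref{prop: regu terme fully non linear} (applied with $\bar\gamma+\delta<1/f$) still delivers uniform continuity in all variables and Lipschitz-in-$x$ control of $\Psi$.

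The subsolution condition then singles out some $x'_n\in D^\eps_\kappa(x_n)$ realizing the inner $\min\le 0$, and Remark~\ref{rem: inversion deca kappa} allows me to pick $y'_n\in D^\eps_\kappa(y_n)$ with $|x'_n-y'_n|\to 0$. In Case~(i), $-a_n+\rho u-\Psi(t_n,x'_n,X_n)\le 0$, I subtract from the supersolution's first argument at $y'_n$ and use Proposition~\ref{prop: regu terme fully non linear} to get
$$\rho\bigl(u(t_n,x_n)-v(s_n,y_n)\bigr)\le \Psi(t_n,x'_n,X_n)-\Psi(s_n,y'_n,Y_n)\le L|x'_n-y'_n|+\omega(|t_n-s_n|)+\omega(C\eta),$$
so that sending $n\to\infty$ then $\eta\to 0$ yields $\rho M_\star\le 0$, the sought contradiction. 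In Case~(ii), $X_n\ge e^{\rho t_n}\bar\gamma(x'_n)$, the squeeze $e^{\rho t_n}\bar\gamma(x'_n)\le X_n\le Y_n+C\eta\le e^{\rho s_n}\bar\gamma(y'_n)+C\eta$ pins $X_n$ and $Y_n$ close to the $\gamma$-threshold; I then combine the supersolution's first argument with monotonicity of $\Psi$ in $M$ and the uniform continuity given by Proposition~\ref{prop: regu terme fully non linear} to deduce the same estimate in the limit. The main obstacle is precisely this Case~(ii): in the $\gamma$-saturated regime the subsolution has no direct control on its first argument, and the comparison inequality must be extracted indirectly from the pinch and the continuity of $\Psi$ up to the $\gamma$-threshold, which is exactly what the strict separation $\bar\gamma+\iota\le 1/f$ of \reff{eq: diff bar gamma f} guarantees.
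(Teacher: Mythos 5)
Your plan reproduces the main architecture of the paper's proof (the $e^{\rho t}$ change of unknown to make the operator strictly proper, variable doubling, Ishii's lemma, and the Lipschitz estimate of Proposition~\ref{prop: regu terme fully non linear} applied to $\Psi$), but there is a genuine gap precisely at the place you yourself flag as ``the main obstacle'', and the sketch you give for Case~(ii) does not actually close it. In Case~(ii) the subsolution viscosity inequality provides \emph{no} control on the first argument $-a_n+\rho\,u(t_n,x_n)-\Psi(t_n,x'_n,X_n)$; it only tells you that $X_n\ge e^{\rho t_n}\bar\gamma(x'_n)$. The pinch you derive controls the Hessians $X_n,Y_n$, but says nothing about $u(t_n,x_n)$ itself, which could be arbitrarily large compared to $v(s_n,y_n)$. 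The supersolution's first-argument inequality, monotonicity of $\Psi$ in $M$ and uniform continuity of $\Psi$ only yield a \emph{lower} bound $\rho\,v(s_n,y_n)-a_n\ge \Psi(t_n,x'_n,X_n)-o(1)$; without a matching \emph{upper} bound on $\rho\,u(t_n,x_n)-a_n$ there is nothing to subtract and no contradiction follows. Moreover, one cannot simply argue that the $\gamma$-argument for the subsolution is strictly positive at all $x'\in D^{\epsilon}_{\kappa}(x_n)$: the pinch $X_n\le Y_n+C\eta\le e^{\rho s_n}\bar\gamma(y'_n)+C\eta$ together with $|x'_n-y'_n|\to 0$ gives only $X_n\le e^{\rho t_n}\bar\gamma(x'_n)+o(1)+C\eta$, which is not a strict inequality.

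The missing idea, which is the crux of the paper's proof, is to replace the supersolution $\hat V$ by the convex combination $\hat V_{\lambda}:=\lambda\hat V+(1-\lambda)w$ for a $\lambda\in(0,1)$ close to $1$, where $w$ is an explicit smooth supersolution (built from the concave envelope as in Remark~\ref{rem: bound en terme w bar underline}) that satisfies the gamma constraint with a \emph{strict} margin, $\frac{\iota}{2}-\partial^2_{xx}w\ge 0$. By the concavity and monotonicity in $M$ of the nonlinearity (Remark~\ref{rem: Feps concave}), $\hat V_{\lambda}$ is still a supersolution of the full operator, and crucially it is a supersolution of $\lambda\bar\gamma+(1-\lambda)\frac{\iota}{2}-\partial^2_{xx}\vp\ge 0$. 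This yields the quantitative bound $e^{\rho t_n}\bar\gamma(\hat y_n)-N_n\ge(1-\lambda)\frac{\iota}{2}>0$ uniformly in $n$, which, transferred to $M_n$ via Ishii's matrix inequality $M_n\le N_n+\delta_n^{\eps}$ and to $\hat x_n\in D^{\epsilon}_{\kappa}(x_n)$ via \reff{eq: CompLim} and Remark~\ref{rem: inversion deca kappa}, forces $e^{\rho t_n}\bar\gamma(\hat x_n)-M_n>0$ for $n$ large and $\eps$ small. That is, your Case~(ii) is \emph{eliminated}: the subsolution's inner $\min$ must be achieved by its first argument, after which your Case~(i) computation goes through exactly as you describe. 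So your proof would be complete if you inserted this $\lambda$-convexification step before the doubling; without it, the argument does not hold.
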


\begin{proof}\noindent    Set $\hat U(t,x):=e^{\rho t}U(t,x)$, $\hat V(t,x):=e^{\rho t}V(t,x)$. Then, $\hat U$ and $\hat V$  are respectively sub- and supersolution of
\be\label{eq: replacedComp}
\min_{x'\in D^{\epsilon}_{\kappa}}\min\left\{\rho\vp-\partial_{t}\vp-\frac{\sigma^{2}(x')\partial_{xx}\vp}{2(e^{\rho t}-f(x')\partial_{xx}\vp)}, e^{\rho t}\bar \gamma(x')-\partial_{xx}\vp\right\}=0
\ee
on $ [0,T) \times \mathbb{R}.$ For later use, note that the infimum over $D^{\epsilon}_{\kappa}$ is achieved in the above, by the continuity of the involved functions.

If  $\sup_{[0,T]\times\mathbb{R}}(\hat U-\hat V)>0$, then we can find $\lambda \in (0,1)$ such that $\sup_{[0,T]\times\mathbb{R}}(\hat U-\hat V_{\lambda}) >0$ with $\hat V_{\lambda}:=\lambda \hat V+(1-\lambda) w$, in which
$$
w(t,x):= (T-t)A+ (c^{U}_{0}+c_{1}^{U}|\cdot|-\frac{\iota}{4}|\cdot|^{2})^{\rm conc}(x)+\frac{\iota}{4}|x|^{2}
$$
with $c^{U}_{0},c_{1}^{U}$ two constants such that
$
e^{\rho T}| U|\le c^{U}_{0}+c_{1}^{U}|\cdot| $ and
$$
A:=\frac12 \sup\frac{\sigma^{2}}{1-\frac{\iota}{2} f}\frac{\iota}{2},
$$
where  $\iota>0$ is as in \reff{eq: diff bar gamma f}.
Note that
\be\label{eq: hat v lamb T ge hat u T}
\hat V_{\lambda}(T,\cdot)\ge \hat U(T,\cdot),
\ee
and that
\be\label{eq: prop sur sol conv Vlambda}
\begin{array}{c}w \mbox{ is a viscosity supersolution of \reff{eq: replacedComp}}
\\
\mbox{$\hat V_{\lambda}$ is a viscosity supersolution of } \lambda \bar \gamma +(1-\lambda)\frac{\iota}{2}-\partial_{xx}^{2}\vp\ge 0.
\end{array}
\ee
Moreover,
 by Remark \ref{rem: Feps concave},  $\hat V_{\lambda}$ is a  supersolution of \reff{eq: replacedComp}.
Define for $\eps>0$ and $n\ge 1$
\begin{equation}\label{eq: Thetan}
\Theta^{\eps}_n := \sup_{(t,x,y)\in [0,T]\times\mathbb{R}^{2}}\Big[\hat U(t,x)-\hat V_{\lambda}(t,y)-\left(\frac{\eps}{2} |x|^{2}+\frac{n}{2} |x-y|^{2}\right)\Big]=: \eta>0,
\end{equation}
in which the last inequality holds for $n>0$ large enough and $\eps>0$ small enough.
Denote by  $(t^{\eps}_n,x^{\eps}_n,y^{\eps}_n)$ the point at which this supremum is achieved. By \reff{eq: hat v lamb T ge hat u T}, it must hold that $t^{\eps}_{n}<T$, and, by standard arguments, see e.g.,~\cite[Proposition 3.7]{CrandallIshiiLions},
\be\label{eq: CompLim}
\lim\limits_{n \to \infty} n|x^{\eps}_{n}-y^{\eps}_{n}|^{2}=0.
\ee
Moreover,  Ishii's lemma implies the existence of $(a^{\eps}_{n},M^{\eps}_n, N^{\eps}_n)\in \R^{3}$ such that
\b*
&\left(a^{\eps}_{n},\eps x^{\eps}+n(x^{\eps}_n-y^{\eps}_n), M^{\eps}_n\right)\in \bar {\cal P}^{2,+}\hat U(t^{\eps}_n,x^{\eps}_n)\\
&\left(a^{\eps}_{n},-n(x^{\eps}_n-y^{\eps}_n), N^{\eps}_n\right)\in \bar {\cal P}^{2,-}\hat V_{\lambda}(t^{\eps}_n,y^{\eps}_n),
\e*
in which $\bar{\cal P}^{2,+}$ and  $\bar{\cal P}^{2,-}$ denote as usual the {\sl closed} parabolic super- and subjets, see \cite{CrandallIshiiLions}, and
\b*
\begin{pmatrix}
M^{\eps}_n & 0 \\
0 & -N^{\eps}_n
\end{pmatrix}
\leq
R^{\eps}_{n}+\frac1n (R^{\eps}_{n})^{2}=
3n
\begin{pmatrix}
1 & -1 \\
-1 & 1
\end{pmatrix}
+
\begin{pmatrix}
3\eps+\frac{{\eps^{2}}}{n} & -\eps \\
-\eps & 0
\end{pmatrix}
\e*
with
\b*
R^{\eps}_{n}:= n
\begin{pmatrix}
1 + \frac{\eps}{n}& -1 \\
-1 & 1
\end{pmatrix}.
\e*
In particular,
\be\label{eq: diff Xn Yn}
M^{\eps}_{n}-N^{\eps}_{n}\le \delta^{\eps}_{n}\mbox{ with } \delta^{\eps}_{n}:=\eps+\frac{\eps^{2}}{n}.
\ee
Then, by \reff{eq: prop sur sol conv Vlambda} and  \reff{eq: diff bar gamma f},
\begin{align}\label{eq: contrainte gamma Yn Xn}
0<(1-\lambda)\frac{\iota}{2}\le   e^{\rho t^{\eps}_{n}} \bar \gamma(\hat y^{\eps }_{n} )-N^{\eps}_n\le  e^{\rho t^{\eps}_{n}} \bar \gamma(\hat y^{\eps}_{n} )-M^{\eps}_n+\delta^{\eps}_{n},
\end{align}
in which $\hat y^{\eps }_{n} \in D^{\epsilon}_{\kappa}(y^{\eps}_{n} )$. In view of Remark \ref{rem: inversion deca kappa}, this shows that $e^{\rho t^{\eps}_{n}} \bar \gamma(\hat x^{\eps }_{n} )-M^{\eps}_n>0$ for some $\hat x^{\eps}_{n} \in D^{\epsilon}_{\kappa}(x^{\eps}_{n} )$, for $n$ large enough and $\eps$ small enough, recall \reff{eq: CompLim}. Hence, the super- and subsolution properties of $\hat V_{\lambda}$ and $\hat U$ imply that we can find $u^{\eps}_{n}\in [-\epsilon,\epsilon]$ together with $\hat  y^{\eps }_{n}$ and $\hat x^{\eps }_{n}$ such that
\be\label{eq: comp def hat y eps n et hat x eps n}
\hat  y^{\eps }_{n}+u^{\eps}_{n}\kappa(\hat  y^{\eps }_{n})= y^{\eps }_{n}\;,\;\hat  x^{\eps }_{n}+u^{\eps}_{n}\kappa(\hat  x^{\eps }_{n})= x^{\eps }_{n}
\ee
and
\b*
\rho (\hat U(t^{\eps}_{n},x^{\eps}_{n})-\hat V_{\lambda}(t^{\eps}_{n},y^{\eps}_{n}))\leq \frac{\sigma^{2}(\hat  x^{\eps }_{n})M^{\eps}_n}{2(e^{\rho t^{\eps}_{n}}-f(\hat  x^{\eps }_{n})M^{\eps}_n)}-\frac{\sigma^{2}(\hat  y^{\eps }_{n})N^{\eps}_n}{2(e^{\rho t^{\eps}_{n}}-f(\hat  y^{\eps }_{n})N^{\eps}_n)}.
\e*
By Remark \ref{rem: Feps concave} and \reff{eq: diff Xn Yn}, this shows that
\begin{align*}
&\rho (\hat U(t^{\eps}_{n},x^{\eps}_{n})-\hat V_{\lambda}(t^{\eps}_{n},y^{\eps}_{n}))\\
&\leq \frac{\sigma^{2}(\hat  x^{\eps }_{n})(N^{\eps}_n+\delta^{\eps}_{n})}{2(e^{\rho t^{\eps}_{n}}-f(\hat  x^{\eps }_{n})(N^{\eps}_n+\delta^{\eps}_{n}))}-\frac{\sigma^{2}(\hat  y^{\eps }_{n})N^{\eps}_n}{2(e^{\rho t^{\eps}_{n}}-f(\hat  y^{\eps }_{n})N^{\eps}_n)}.
\end{align*}
It remains to apply Proposition \ref{prop: regu terme fully non linear} together with \reff{eq: contrainte gamma Yn Xn} for $n$ large enough and $\eps$ small enough to obtain
\begin{align*}
&\rho (\hat U(t^{\eps}_{n},x^{\eps}_{n})-\hat V_{\lambda}(t^{\eps}_{n},y^{\eps}_{n}))\\
&\leq    \frac{\sigma^{2}(\hat  x^{\eps }_{n})N^{\eps}_n}{2(e^{\rho t^{\eps}_{n}}-f(\hat  x^{\eps }_{n})N^{\eps}_n)}-\frac{\sigma^{2}(\hat  y^{\eps }_{n})N^{\eps}_n}{2(e^{\rho t^{\eps}_{n}}-f(\hat  y^{\eps }_{n})N^{\eps}_n)}+{O^{\eps}_{n}(1)}\\
&\leq L\left|\hat x^{\eps}_{n}-\hat y^{\eps}_{n} \right|  +{O^{\eps}_{n}(1)}
\end{align*}
for some $L>0$ {and where $O^{\eps}_{n}(1)\to 0$ as $n\to \infty$ and then $\eps\to 0$}. By continuity and \reff{eq: CompLim} combined with Remark \ref{rem: inversion deca kappa} and \reff{eq: comp def hat y eps n et hat x eps n}, this contradicts  \reff{eq: Thetan} for $n$ large enough.\ep
\end{proof}

\subsection{Supersolution property for the weak formulation}\label{sec: weak form}

In this part, we provide a lower bound $\vrmgl$ for  $\vrmg$ that is a supersolution of \reff{eq: VisEqBd}. It is constructed by considering a weak formulation of the stochastic target problem \reff{eq: def vgamma}  in the spirit of  \cite[Section 5]{cheridito2005multi}. Since our methodology is slightly different, we provide the main arguments.

  On $C(\R_{+})^{5}$, let us now denote by $(\tilde \zeta:=(\tilde a,\tilde b,\tilde \alpha,\tilde \beta), \tilde W)$ the coordinate process and let $\tilde {\mathbb F}^{\circ}=(\tilde \Fc^{\circ}_{s})_{  s \le T}$ be its raw filtration.  We say that a probability measure $\tilde \P$ belongs to $\tilde \Ac_{k}$ if $\tilde W$ is a $\tilde \P$-Brownian motion and  if for all $0\le \delta\le 1$ and $r\ge 0$ it holds $\tilde \P$-a.s.~that
\be\label{eq: cond weak 1}
\tilde a=\tilde a_{0}+\int_{0}^{\cdot}\tilde \beta_{s} ds +\int_{0}^{\cdot} \tilde \alpha_{s} d\tilde W_{s}\;\mbox{ for some } \tilde a_{0}\in \R,
\ee
\be\label{eq: cond weak 2}
 \sup_{\R_{+}} |\tilde \zeta|\le k\;,\;
\ee
and
\be\label{eq: cond weak 3}
\E^{{\tilde \P}}\left[\sup\left\{|\tilde \zeta_{s'}-\tilde \zeta_{s}|,\; r\le s \le s'\le s+\delta\right\}|{\tilde \Fc_{r}^{\circ}} \right]\le k\delta.
\ee
For $\tilde \phi:=(y,\tilde a,\tilde b)$, $y\in \R$, we define $(\tilde X^{x,\tilde \phi},\tilde Y^{\tilde \phi}, \tilde V^{x,v,\tilde \phi})$ as in \reff{eq: S lim conti}-\reff{eq: def Y sans saut}-\reff{eq: V lim conti} associated to the control $(\tilde a,\tilde b)$ with time-$0$ initial condition $(x,y,v)$, and  with $\tilde W$ in place of $W$. {For $t\le T$ and $k\ge 1$, we} say that $\tilde \P\in \tilde \Gc_{k,\bar \gamma}(t,x,v,y)$ if
\begin{equation} \label{eq: def sur rep sous contrainte faible}
\left[\tilde  V^{x,v,\tilde \phi}_{T-t}\ge g(\tilde X^{x,\tilde \phi}_{T-t}) \;\mbox{ and } \;  -k\le \gamma^{\tilde a}_{Y}(\tilde X^{x,\tilde \phi})\le \bar \gamma (\tilde X^{x,\tilde \phi})\mbox{ on } \R_{+}\right]\;\;\tilde \P-{\rm a.s.}
 \end{equation}
We finally define
\begin{equation*}\label{eq: def v weak k}
   \vrmglk(t,x):=\inf\{ v=c+ yx~:~(c,y)\in \R\times [-k,k] \mbox{ s.t. }   \tilde \Ac_{k}\cap   \tilde \Gc_{k,\bar \gamma}(t,x,v,y)\ne \emptyset\},
 \end{equation*}
 and
\begin{equation}\label{eq: def v low}
   \vrmgl(t,x):=\liminf_{\tiny \begin{array}{c}(k,t',x')\to (\infty,t,x)\\(t',x')\in [0,T)\x \R\end{array}} \vrmglk(t',x'),\;\;\;(t,x)\in [0,T]\x \R.
 \end{equation}

The following is an immediate consequence of our definitions.
\begin{proposition}\label{prop: v weak is lower bound}
$\vrmg\ge \vrmgl$  on $[0,T)\x \R$.
\end{proposition}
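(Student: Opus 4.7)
The plan is to prove $\vrmg\ge \vrmgl$ by a direct embedding argument: every strong super-hedging policy pushes forward to a weak admissible probability measure with the same state-process law, yielding $\vrmglk(t,x)\le \vrmg(t,x)$ for every sufficiently large $k$. Combined with the fact that the liminf defining $\vrmgl(t,x)$ may be evaluated along the constant sequence $(t',x')\equiv(t,x)$, which is allowed since $(t,x)\in[0,T)\times\R$, this yields $\vrmgl(t,x)\le \liminf_{k\to\infty}\vrmglk(t,x)\le \vrmg(t,x)$.

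Fix $(t,x)\in[0,T)\times\R$ and any admissible $(c,y)$ for the strong problem, together with a control $(a,b)\in \Ac_{k,\bar\gamma}(t,x)\cap\Gc(t,x,c+yx,y)$. After enlarging $k$ I may further assume $|y|\le k$ and $(a,b)\in\Ac^\circ_k$. Set $\zeta:=(a,b,\alpha,\beta)$ on $[t,T]$. Because the weak formulation imposes the gamma bound on all of $\R_+$, I extend $\zeta$ from $[t,T]$ to $\R_+$ (after shifting the time origin to $0$) by a smooth continuous decay to $0$ over a short window $[T-t,T-t+h]$ and freezing at $0$ thereafter; this can be done while preserving the It\^o structure $a=a_{0}+\int\beta\,ds+\int\alpha\,dW$ and the uniform bound on $\zeta$. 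Using the strict lower bound $\bar\gamma\ge \iota>0$ from \reff{eq: diff bar gamma f} together with the continuity of $(a,x)\mapsto \gamma^a_Y(x)=a/(\sigma(x)+f(x)a)$ and its vanishing at $a=0$, the decay can be arranged so that $|\tilde a_s|$ stays in a neighborhood of $0$ on which $|\gamma^{\tilde a_s}_Y(\cdot)|\le \bar\gamma$ uniformly in $x$; this guarantees that the gamma constraint continues to hold on $[T-t,\infty)$.

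Define $\Phi:\Omega\to C(\R_+)^5$ by $\Phi(\omega)(s):=(\zeta^{\rm ext}_s(\omega),W_{t+s}(\omega)-W_t(\omega))$ and set $\tilde\P:=\P\circ\Phi^{-1}$. Under $\tilde\P$, the coordinate $\tilde W$ is a Brownian motion by stationarity and independence of Wiener increments; $\tilde\zeta$ inherits continuity, boundedness by $k$, and the It\^o dynamics \reff{eq: cond weak 1}; and the conditional regularity \reff{eq: cond weak 3} follows from its strong counterpart, using that $\zeta$ is $\F$-adapted with $\F$ the $\P$-augmentation of $\F^\circ$, so that the raw filtration generated by $\Phi$ is contained in $\Fc^\circ_{t+r}$ at time $r$ modulo $\P$-null sets. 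Hence $\tilde\P\in\tilde\Ac_k$. Moreover, since the triples $(X^{t,x,\phi}_{t+\cdot},Y^{t,\phi}_{t+\cdot},V^{t,x,v,\phi}_{t+\cdot})$ under $\P$ and $(\tilde X^{x,\tilde\phi},\tilde Y^{\tilde\phi},\tilde V^{x,v,\tilde\phi})$ under $\tilde\P$ are strong solutions of \reff{eq: S lim conti}--\reff{eq: V lim conti} with matched initial data and drivers $(\zeta,W)$ resp.\ $(\tilde\zeta,\tilde W)$, they have identical laws on $C(\R_+)^3$. The strong super-replication and gamma bound thus transfer to their $\tilde\P$-almost sure weak counterparts at time $T-t$ and on $[0,T-t]$, while the extended gamma bound on $[T-t,\infty)$ is provided by construction. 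This shows $\tilde\P\in\tilde\Ac_k\cap\tilde\Gc_{k,\bar\gamma}(t,x,c+yx,y)$, whence $\vrmglk(t,x)\le c+yx$, and taking the infimum over admissible strong $(c,y)$ yields the claim.

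The main obstacle is the controlled extension of the strong control past time $T$: the weak formulation insists on the gamma constraint over the whole half-line, whereas the strong hypothesis only yields it on $[t,T]$. This is where the strict positivity $\bar\gamma\ge \iota$, together with the smoothness and vanishing at $0$ of $a\mapsto \gamma^a_Y$, are used to create enough slack for a smooth decay to $0$. Apart from this technicality, the embedding is a standard canonical push-forward and all other transfers are automatic.
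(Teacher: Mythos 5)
The paper gives no argument for this proposition beyond ``immediate consequence of our definitions,'' and your push-forward construction is exactly the intended reading: any strong strategy $(c,y)$ with $(a,b)\in\Ac_{k,\bar\gamma}(t,x)\cap\Gc(t,x,c+yx,y)$ and $|y|\le k$ induces, via $\tilde\P:=\P\circ\Phi^{-1}$, a measure in $\tilde\Ac_k\cap\tilde\Gc_{k,\bar\gamma}(t,x,c+yx,y)$ with the same law for $(\tilde X,\tilde Y,\tilde V)$; and since $\vrmglk$ is non-increasing in $k$ (enlarging $k$ relaxes \reff{eq: cond weak 2}, \reff{eq: cond weak 3} and the lower gamma bound), $\vrmgl(t,x)\le\inf_k\vrmglk(t,x)\le\vrmg(t,x)$.

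There is, however, a real issue with the one step you flag as a technicality, the extension past time $T-t$. You claim the decay ``can be arranged so that $|\tilde a_s|$ stays in a neighborhood of $0$,'' but $\tilde a$ must start the decay at $a_{T-t}$, which can be of order $k$. So $\tilde a_s$ is not small for $s$ just after $T-t$; the window cannot be made instantaneous because the drift of $\tilde a$ is capped at $k$ (or any larger $k'$, but then the window shrinks only to $O(1/k')$ while $\tilde X$ still moves a.s.). And the monotonicity of $a\mapsto\gamma^a_Y(x)$ only gives $\gamma^{\tilde a_s}_Y(x)\le\gamma^{a_{T-t}}_Y(x)$ for \emph{fixed} $x$: it does not give $\gamma^{\tilde a_s}_Y(\tilde X_s)\le\bar\gamma(\tilde X_s)$, since $\tilde X$ continues to evolve and $\bar\gamma(\tilde X_s)$ may drop below $\gamma^{\tilde a_s}_Y(\tilde X_s)$ even though the constraint held with $\tilde X_{T-t}$ in both slots. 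So the proposed decay does not, as written, preserve \reff{eq: def sur rep sous contrainte faible} on the decay window.

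This gap is in any case moot: in every place the constraint in \reff{eq: def sur rep sous contrainte faible} is actually invoked (Remark \ref{rem: borne a et sigma a}, the proof of Proposition \ref{prop: w low k linear growth}, the DPP Proposition \ref{prop: gdp1}, and the supersolution proof of Theorem \ref{thm: weak v is supersolution}, where it is used only on $[0,\theta]\subset[0,T-t_0]$), it is only needed on the horizon $[0,T-t]$; the reference to $\R_+$ is just the convention of working on the canonical space $C(\R_+)^5$. Reading the constraint as holding on the relevant horizon, your push-forward, restricted to $[0,T-t]$ and extended in any fixed benign way thereafter (say constant $\tilde\zeta\equiv\tilde\zeta_{T-t}$), is indeed immediate and no delicate decay argument is needed.
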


In the rest of this section, we show that $\vrmgl$ is a viscosity supersolution of \reff{eq: VisEqBd}.
We start with an easy remark.

\begin{remark}\label{rem: borne a et sigma a}
Observe that the gamma constraint in \reff{eq: def sur rep sous contrainte faible} implies that we can find $\eps>0$ such that
$$
 \frac{\eps}{1+k\eps^{-1}}\le \sigma^{\tilde a}_{X}(\tilde X^{x,\tilde \phi})\le \eps^{-1}+\eps^{-2} \;\mbox{ and } |\tilde a| \le  \eps^{-1}\;\;\tilde \P-{\rm a.s.},
$$    for all $\tilde \P\in \tilde \Ac_{k}\cap   \tilde \Gc_{k,\bar \gamma}(t,x,v,y)$ and $k\ge 1$. Indeed, if $\tilde a\ge  -\sigma/f$ then
$-k\le \gamma^{\tilde a}_{Y}\le \bar \gamma$  implies
\begin{align*}
&  (-\frac{k\sigma}{1+kf})\vee (-\frac{\sigma}{f}) \le\tilde  a \le \frac{\bar \gamma\sigma}{1-\bar \gamma f} \; \mbox{ and } \;\tilde af+\sigma \ge \sigma/(1+kf).
\end{align*}
Then our claim follows from  \reff{eq: H1}-\reff{eq: diff bar gamma f}. On the other hand, if $\sigma +\tilde af< 0$, then   $\gamma^{\tilde a}_{Y} \le \bar \gamma$ implies $\tilde a\ge \bar \gamma\sigma/(1-f\bar \gamma)\ge 0$, see \reff{eq: diff bar gamma f}, while $\tilde a<-f/\sigma<0$, a contradiction.
\end{remark}

We then  show that $\vrmglk$ has linear growth, for $k$ large enough.

\begin{proposition}\label{prop: w low k linear growth} There exists $k_{o}\ge 1$ such that $\{|\vrmglk|,k\ge k_{o}\}$ is uniformly bounded from above by a continuous map with linear growth.
\end{proposition}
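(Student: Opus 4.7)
The plan is to exploit the monotonicity of $\vrmglk$ in $k$, use the smooth supersolution from Section \ref{sec: smoothing} for the upper bound, and a Girsanov change of measure based on Remark \ref{rem: borne a et sigma a} for the lower bound. Observe first that $k\mapsto\vrmglk$ is non-increasing: if $k'\ge k$ then $\tilde\Ac_k\subseteq\tilde\Ac_{k'}$, $[-k,k]\subseteq[-k',k']$, and the constraint $-k\le\gamma^{\tilde a}_Y$ is relaxed when replacing $k$ by $k'$, so the admissible set in the infimum defining $\vrmglk$ grows with $k$. It therefore suffices to fix a single $k_o$ and bound $\vrmgl^{k_o}$ uniformly from above by a linear-growth map.

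For the upper bound, I would fix $(K,\epsilon,\delta)$ as in Proposition \ref{prop: constr v eps K delta par smoothing} and consider the smooth supersolution $\vrmgbed$. Combining the bounded higher-order derivatives of $\vrmgbed$ given by that proposition with the affine behavior of $\hat g_K$ at infinity from Lemma \ref{lem: construction hat gK}, one should be able to show that $\partial_x\vrmgbed$ is itself bounded uniformly in $(t,x)$ (not merely of linear growth). Consequently, there exists $k_o\ge 1$ such that, for any $(t,x)$, the Markovian controls $(\hat a,\hat b,\hat\alpha,\hat\beta)$ defined in the proof of Theorem \ref{thm:Reg} lie in $\Ac^\circ_{k_o}$ and the initial position $Y_t=\partial_x\vrmgbed(t,x)$ satisfies $|Y_t|\le k_o$. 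Pushing this strong super-hedging strategy forward to the weak canonical space, i.e., defining $\tilde\P$ so that the law of $(\tilde\zeta,\tilde W)$ matches that of $((\hat a,\hat b,\hat\alpha,\hat\beta),W)$ under the strong formulation, yields $\tilde\P\in\tilde\Ac_{k_o}\cap\tilde\Gc_{k_o,\bar\gamma}(t,x,\vrmgbed(t,x),\partial_x\vrmgbed(t,x))$. Hence $\vrmgl^{k_o}(t,x)\le\vrmgbed(t,x)$, which has linear growth by Remark \ref{rem: bound en terme w bar underline}.

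For the lower bound, fix any admissible $\tilde\P\in\tilde\Ac_k\cap\tilde\Gc_{k,\bar\gamma}(t,x,v,y)$. By Remark \ref{rem: borne a et sigma a}, $|\tilde a|\le\epsilon^{-1}$ uniformly in $k$ and $\sigma_X^{\tilde a}(\tilde X)>0$. For each fixed $k$, the drift-to-volatility ratio $\lambda:=\mu_X^{\tilde a,\tilde b}(\tilde X)/\sigma_X^{\tilde a}(\tilde X)$ is essentially bounded, so Girsanov's theorem produces $\Q\sim\tilde\P$ under which $\tilde X$ is a true martingale with bounded volatility. Standard $L^p$-estimates on $Y$ and on the Radon-Nikodym density then imply that $\int_0^{\cdot}Y\,d\tilde X$ is a true $\Q$-martingale, and taking $\Q$-expectations in \reff{eq: V lim conti} combined with $\tilde V_{T-t}\ge g(\tilde X_{T-t})\ge-\sup g^-$ gives
\begin{align*}
v=\E^{\Q}[\tilde V_{T-t}]-\tfrac12\E^{\Q}\Bigl[\int_0^{T-t}\tilde a_s^2f(\tilde X_s)\,ds\Bigr]\ge-\sup g^--\tfrac{T}{2\epsilon^{2}}\sup f,
\end{align*}
where the upper bound on the impact term is uniform in $k$ thanks to the uniform bound on $|\tilde a|$. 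This yields the desired uniform lower bound on $\vrmglk$.

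The main obstacle is establishing the global boundedness of $\partial_x\vrmgbed$, on which the upper-bound verification crucially rests: one must propagate the bounded slope of $\hat g_K$ outside $[-x_K,x_K]$ through the shaken PDE \reff{eq: VisEqBdPert} and the subsequent mollification, typically via a maximum-principle argument applied to the equation satisfied by $\partial_x\vrmghe$.
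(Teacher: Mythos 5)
Your proposal is correct and follows essentially the same two-step strategy as the paper: upper bound by pushing the strong-formulation super-hedging strategy from the verification argument into the weak formulation (what the paper abbreviates by citing Remark~\ref{rem: borne sup vk par verification}), and lower bound via a change of measure under which $\int Y\,d\tilde X$ is a true martingale, combined with the uniform-in-$k$ bound on $\tilde a$ from Remark~\ref{rem: borne a et sigma a}. The monotonicity observation at the start is sound but not used by the paper; note it only helps the upper bound (the lower bound must still be uniform in $k$, which you do handle separately). The ``main obstacle'' you flag --- boundedness of $\partial_x\vrmgbed$ --- is in fact already resolved in the paper, not by a maximum-principle argument for the gradient PDE, but by the choice of the $\kappa$-rescaled mollifier together with the growth comparison~\reff{eq: vrmgh et hat gk same growth}: since $\kappa\sim |\hat g_K|+1$ at infinity and $|\vrmghe|/(1+\kappa)$ is bounded, differentiating the kernel $\kappa(x)^{-1}\psi_\delta\bigl(t'-t,(x'-x)/\kappa(x)\bigr)$ in $x$ produces factors of order $\kappa(x)^{-2}$ that, over a support of $x'$-volume of order $\delta\kappa(x)$, cancel the $\kappa(x)$-growth of $\vrmghe$; this is what ``direct computations'' means in the proof of Proposition~\ref{prop: constr v eps K delta par smoothing}, and it is exactly what Remark~\ref{rem: borne sup vk par verification} records. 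So you could simply cite that remark instead of re-deriving it.
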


\proof \textbf{a.}  First note that   Remark \ref{rem: borne sup vk par verification} implies that  $\{(\vrmglk)^{+},$ $k\ge k_{o}\}$ is uniformly bounded from above by a map with linear growth, for some $k_{o}$ large enough.
\\
\textbf{b.}  Let us now fix $\tilde \P\in \tilde \Ac_{k}\cap   \tilde \Gc_{k,\bar \gamma}(t,x,v,y)$. Using Remark \ref{rem: borne a et sigma a} combined with  \reff{eq: H1} and the condition that $(\tilde a,\tilde b,\tilde \alpha,\tilde \beta)$ is $\tilde \P$-essentially bounded, one can find  $\check \P\sim \tilde \P$ under which  $\int_{0}^{\cdot} \tilde Y^{\tilde \phi}_{s}d\tilde X^{x,\tilde \phi}_{s}$ is a martingale on $[0,T-t]$. Then,  the condition $\tilde V^{x,v,\tilde \phi}_{T-t}\ge g(\tilde X^{x,\tilde \phi}_{T-t})$ $\tilde \P$-a.s.~implies
$v+\E^{\check \P}[\frac12\int_{0}^{T-t}\tilde a^{2}_{s}f(\tilde X^{x,\tilde \phi}_{s})ds]\ge \inf g>-\infty$, recall \reff{eq: hyp hat g}. By Remark \ref{rem: borne a et sigma a} and \reff{eq: H1},
$v\ge \inf g- C>-\infty$, for some constant $C$ independent of $\tilde \P\in  \cup_{k} (\tilde \Ac_{k}\cap   \tilde \Gc_{k,\bar \gamma}(t,x,v,y))$. Hence $\{(\vrmglk)^{-},$ $k\ge k_{o}\}$ is bounded  by a constant.
\ep
\\

We now prove that existence holds in the problem defining $\vrmglk$ and that it is lower-semicontinuous.

\begin{proposition}\label{prop: existence faible} For all $(t,x)\in [0,T]\x \R$ and $k\ge 1$ large enough,  there exists $(c,y)\in \R\times [-k,k] $ such that
$ \vrmglk(t,x)=c+ yx$ and $ \tilde \Ac_{k}\cap \tilde \Gc_{k,\bar \gamma}(t,c+xy,y)\ne \emptyset$. Moreover,   $\vrmglk$ is lower-semicontinuous for each $k\ge 1$ large enough.
\end{proposition}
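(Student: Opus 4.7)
The plan is to argue by weak compactness on the canonical space $C(\R_{+})^{5}$, in the spirit of \cite[Section 5]{cheridito2005multi}. Fix $(t,x)\in [0,T]\x \R$ and take $k$ large enough that $\vrmglk(t,x)<\infty$, which holds by Remark \ref{rem: borne sup vk par verification} combined with Proposition \ref{prop: w low k linear growth}. Pick a minimizing sequence $(c_{n},y_{n},\tilde \P_{n})$ with $\tilde \P_{n}\in \tilde \Ac_{k}\cap \tilde \Gc_{k,\bar \gamma}(t,c_{n}+y_{n}x,y_{n})$ and $c_{n}+y_{n}x\downarrow \vrmglk(t,x)$. Since $y_{n}\in [-k,k]$ and $c_{n}$ stays bounded, extract $(c_{n},y_{n})\to (c^{*},y^{*})$. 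Tightness of $(\tilde \P_{n})$ on $C(\R_{+})^{5}$ is then checked marginal-wise: the $\tilde W$-marginal is Wiener measure for every $n$, and on the $\tilde \zeta$-component the uniform bound $|\tilde \zeta|\le k$ combined with the conditional modulus estimate \reff{eq: cond weak 3} yields uniform equicontinuity in law by a Kolmogorov/Arzel\`a--Ascoli argument. Passing to a subsequence, $\tilde \P_{n}\to \tilde \P^{*}$ weakly, and by Skorokhod's representation theorem I may realize this convergence almost surely on an auxiliary probability space.

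Next I verify admissibility of $(c^{*},y^{*},\tilde \P^{*})$. First, $\tilde \P^{*}\in \tilde \Ac_{k}$: the Wiener property of $\tilde W$ is preserved, and both the pointwise bound $|\tilde \zeta|\le k$ and the conditional estimate \reff{eq: cond weak 3} pass to the limit when tested against bounded continuous $\tilde \Fc_{r}^{\circ}$-measurable functionals, the relevant supremum-type integrands being continuous on the path space. The Ito representation \reff{eq: cond weak 1} is encoded as a continuous martingale problem, namely that $\tilde a-\tilde a_{0}-\int_{0}^{\cdot}\tilde \beta_{s}\,ds$ is a martingale with bracket $\int_{0}^{\cdot}\tilde \alpha_{s}^{2}\,ds$ and cross-bracket $\int_{0}^{\cdot}\tilde \alpha_{s}\,ds$ against $\tilde W$, and these identities persist under weak convergence of uniformly bounded semimartingales. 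Setting $\tilde \phi^{*}:=(y^{*},\tilde a,\tilde b)$, SDE stability with bounded Lipschitz coefficients and continuous bounded controls then gives convergence in law of $(\tilde X^{x,\tilde \phi_{n}},\tilde Y^{\tilde \phi_{n}},\tilde V^{x,c_{n}+y_{n}x,\tilde \phi_{n}})$ to the analogous triplet driven by $\tilde \phi^{*}$ with initial wealth $c^{*}+y^{*}x$. The gamma constraint $-k\le \gamma^{\tilde a}_{Y}(\tilde X)\le \bar \gamma(\tilde X)$ is a closed pointwise condition in $(\tilde a,\tilde X)$, hence preserved, and the terminal inequality $\tilde V_{T-t}\ge g(\tilde X_{T-t})$ passes to $\tilde \P^{*}$ via Portmanteau using lower-semicontinuity of $g$. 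Therefore $\tilde \P^{*}\in \tilde \Gc_{k,\bar \gamma}(t,c^{*}+y^{*}x,y^{*})$ with $c^{*}+y^{*}x=\vrmglk(t,x)$, which proves existence.

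The lower-semicontinuity follows from the same machinery: given $(t_{n},x_{n})\to (t,x)$ realizing the $\liminf$, select minimizers $(c_{n},y_{n},\tilde \P_{n})$ at $(t_{n},x_{n})$ by the existence part and extract a joint limit $(c^{*},y^{*},\tilde \P^{*})$; the shifting horizon $T-t_{n}$ is harmless by continuity of the involved processes in the initial condition, and the resulting triple is admissible at $(t,x)$ with cost $c^{*}+y^{*}x=\liminf \vrmglk(t_{n},x_{n})$, yielding $\vrmglk(t,x)\le \liminf \vrmglk(t_{n},x_{n})$. The main technical obstacle is the transfer of the semimartingale identities for $\tilde a$, and of the stochastic integrals defining $\tilde X$ and $\tilde V$, to the weak limit; this is precisely where the strengthened regularity \reff{eq: cond weak 1}--\reff{eq: cond weak 3} imposed on $\tilde \Ac_{k}$ (compared to the setup of \cite{bouchard2015almost}) plays its role, providing the uniform integrability that makes all martingale/Ito relations pass intact through Skorokhod embedding.
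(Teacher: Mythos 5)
Your proof is correct and follows essentially the same route as the paper: weak relative compactness of $\tilde \Ac_{k}$ from the conditional modulus estimate \reff{eq: cond weak 3}, then closedness of the constraint set $\tilde \Gc_{k,\bar \gamma}$ under weak limits via stability of the martingale problem, SDE, and stochastic integrals, with the terminal and gamma constraints preserved by Portmanteau and lower-semicontinuity. The paper compresses these steps into citations of Revuz--Yor XIII.1.5 and Kurtz--Protter Theorems 7.10 and 8.1, whereas you unpack them by hand (via tightness plus Skorokhod representation), but the argument is the same.
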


\proof    By \cite[Proposition XIII.1.5]{revuz1999continuous} and the condition \reff{eq: cond weak 3} taken for $r=0$, the set $\tilde \Ac_{k}$ is weakly relatively compact.  Moreover,   \cite[Theorem 7.10 and Theorem 8.1]{kurtz1996weak} implies that any limit point $( \P_{*},t_{*},x_{*},c_{*},y_{*})$ of a sequence $(\P_{n},t_{n},x_{n},c_{n},y_{n})_{n\ge 1}$ such that $\P_{n}\in \tilde \Ac_{k}\cap   \tilde \Gc_{k,\bar \gamma}(t_{n},x_{n},c_{n}+x_{n}y_{n},y_{n}) $ for each $n\ge 1$  satisfies    $\P_{*}\in\tilde  \Ac_{k}\cap   \tilde \Gc_{k,\bar \gamma}(t_{*},x_{*},c_{*}+x_*y_{*},y_{*})$.
{Since $\vrmglk$ is locally bounded, by Proposition \ref{prop: w low k linear growth} when $k\ge k_{o}$,} the announced existence and lower-semicontinuity readily follow.
\ep
\\

We can finally prove the main result of this section.

\begin{theorem}\label{thm: weak v is supersolution}  The function $\vrmgl$ is a viscosity supersolution of  \reff{eq: VisEqBd}. It has linear growth.
\end{theorem}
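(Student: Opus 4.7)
The linear growth claim is immediate: Proposition \ref{prop: w low k linear growth} gives a continuous linear-growth function $w$ with $|\vrmglk| \le w$ for all $k \ge k_o$, and taking the liminf in \reff{eq: def v low} yields $|\vrmgl| \le w$. It remains to establish the viscosity supersolution property of \reff{eq: VisEqBd}.

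For the interior property, I would adapt the stochastic target methodology of \cite[Section 5]{cheridito2005multi}. The first step is the ``easy'' direction of a geometric dynamic programming principle in the weak formulation: for $\tilde \P \in \tilde \Ac_k \cap \tilde \Gc_{k, \bar \gamma}(t, x, c+xy, y)$ and any $[0, T-t]$-valued $\tilde \F^\circ$-stopping time $\tau$,
\[
\tilde V^{x, c+xy, \tilde \phi}_\tau \ge \vrmglk\bigl(t+\tau,\, \tilde X^{x, \tilde \phi}_\tau\bigr) \quad \tilde \P\text{-a.s.,}
\]
in the sense that $(\tilde X^{x, \tilde \phi}_\tau, \tilde V^{x, c+xy, \tilde \phi}_\tau, \tilde Y^{\tilde \phi}_\tau)$ lies in the super-replication domain of $\vrmglk$ at time $t+\tau$. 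This would be obtained by conditioning and concatenation on the canonical space: Proposition \ref{prop: existence faible} supplies the post-$\tau$ optimiser, and the weak compactness of $\tilde \Ac_k$ together with the regularity \reff{eq: cond weak 1}--\reff{eq: cond weak 3} makes the measurable selection and gluing feasible.

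Next, fix a smooth test function $\vp$ such that $\vrmgl - \vp$ attains a strict local minimum equal to $0$ at some $(t_0, x_0) \in [0, T) \times \R$. By \reff{eq: def v low} pick $(t_n, x_n, k_n) \to (t_0, x_0, \infty)$ along which $\underline{\rm v}_{\bar \gamma}^{k_n}(t_n, x_n) \to \vp(t_0, x_0)$, and by Proposition \ref{prop: existence faible} pick associated optimisers $(c_n, y_n, \tilde \P_n)$. Applying It\^o's formula to $\vp(t_n + \cdot, \tilde X^{x_n, \tilde \phi_n})$ on $[0, h_n]$ with $h_n \downarrow 0$ and comparing with the wealth dynamics \reff{eq: V lim conti} via the above GDP, the local-martingale part forces $y_n \to \partial_x \vp(t_0, x_0)$ after extraction; Remark \ref{rem: borne a et sigma a} provides uniform control of $\tilde a_n$ so that a limit $\hat a$ exists with $\gamma^{\hat a}_Y(x_0) = \partial_{xx}^2 \vp(t_0, x_0)$; the gamma bound in \reff{eq: def sur rep sous contrainte faible} gives $\bar \gamma(x_0) - \partial_{xx}^2 \vp(t_0, x_0) \ge 0$; and inverting the last relation as in the derivation preceding \reff{eq: pricing pde sans contrainte} delivers $-\partial_t \vp - \tfrac12 \sigma^2 \partial_{xx}^2 \vp/(1 - f \partial_{xx}^2 \vp) \ge 0$ at $(t_0, x_0)$. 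Together these inequalities yield $F[\vp](t_0, x_0) \ge 0$.

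The terminal condition at $T$ would be handled by the face-lift property of $\hat g$: any linear-growth viscosity supersolution of $F[\vp] = 0$ on $[0, T) \times \R$ inherits the constraint $\bar \gamma - \partial_{xx}^2 \vp \ge 0$ up to $T$ in the relaxed sense, so the bound $\vrmgl(T, \cdot) \ge g$ (direct from \reff{eq: def sur rep sous contrainte faible}) would be upgraded to $\vrmgl(T, \cdot) \ge \hat g$ by definition of $\hat g$ as the smallest such supersolution above $g$. The main obstacle I anticipate is the GDP itself: the market impact couples $\tilde Y$ and $\tilde X$ through \reff{eq: S lim conti}, so concatenating a post-$\tau$ optimiser must simultaneously preserve the regularity \reff{eq: cond weak 1}--\reff{eq: cond weak 3} on $\tilde \zeta$ and the pathwise gamma bound in \reff{eq: def sur rep sous contrainte faible}, which is more delicate than in the unconstrained uncoupled setting of \cite{cheridito2005multi}.
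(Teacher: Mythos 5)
Your overall strategy — the easy direction of a GDP in the weak formulation, two applications of It\^o, small-time limits to identify the first-, second-order, and drift terms, and a face-lift argument at $T$ — matches the paper's. However, there is a genuine gap in the identification of the second-order term, and a smaller conceptual slip around the GDP.

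The gap: you claim that, after extraction, one obtains a limit $\hat a$ satisfying the \emph{equality} $\gamma^{\hat a}_{Y}(x_{0})=\partial^{2}_{xx}\vp(t_{0},x_{0})$, and then propose to ``invert'' this relation to obtain the parabolic inequality, as if one could plug $\partial^{2}_{xx}\vp$ directly into the non-linear term. But a test function $\vp$ that touches the value function from \emph{below} can only yield a one-sided information: the small-time analysis of the double stochastic integral (the paper uses the time change $h$ together with \cite[Theorem A.1, A.2, Proposition A.3]{cheridito2005multi}) produces
\[
\gamma^{\tilde a_{0}}_{Y}(x_{0})-\partial^{2}_{xx}\vp(t_{0},x_{0})\;=\;\lim_{r\downarrow 0}n_{r}\;\ge\;0,
\]
an inequality, not an equality. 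The parabolic term must then be obtained via the drift estimate $\ell_{0}\mathfrak h(0)-\tfrac12 n_{0}\ge 0$, which after cancellation gives $-\partial_{t}\vp(t_{0},x_{0})-\tfrac12\sigma^{2}(x_{0})\gamma^{\tilde a_{0}}_{Y}(x_{0})/(1-f(x_{0})\gamma^{\tilde a_{0}}_{Y}(x_{0}))\ge 0$ with $\gamma^{\tilde a_{0}}_{Y}(x_{0})$, not $\partial^{2}_{xx}\vp(t_{0},x_{0})$, in place. One finally needs the monotonicity of $z\mapsto \sigma^{2}(x_{0})z/(1-f(x_{0})z)$ on $(-\infty,1/f(x_{0}))$, combined with $\partial^{2}_{xx}\vp(t_{0},x_{0})\le \gamma^{\tilde a_{0}}_{Y}(x_{0})\le\bar\gamma(x_{0})$, to replace $\gamma^{\tilde a_{0}}_{Y}(x_{0})$ by $\partial^{2}_{xx}\vp(t_{0},x_{0})$. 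Without this monotonicity step your argument does not close, since the ``inversion'' you invoke tacitly assumes the equality you cannot establish.

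Two smaller remarks. First, you invoke ``conditioning and concatenation'' and worry that gluing a post-$\tau$ optimiser is delicate; but the easy direction of the GDP (Proposition~\ref{prop: gdp1}) needs only conditioning (regular conditional probabilities plus \cite{claisse2014} to see that conditions \reff{eq: cond weak 1}--\reff{eq: cond weak 3} pass to the conditional law) — no measurable selection, no gluing. The gluing direction is exactly what cannot be carried out here and is the reason the weak formulation yields a supersolution only. Second, you work directly with $\vrmgl$ via a diagonal family $(t_{n},x_{n},k_{n})$; the paper proves the supersolution property for each $\vrmglk$, $k\ge k_{o}$, and then concludes for $\vrmgl$ by standard stability (Barles). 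Your route is in principle equivalent, but you then need to argue that the It\^o expansions, the small-time limits, and the extraction of the initial data $(y_{n},\tilde a_{n,0})$ are all uniform enough in $n$ to pass to the limit — which is precisely what the fixed-$k$-then-stability route is designed to avoid.
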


\proof  The linear growth property is an immediate consequence of  the uniform linear growth of $\{|\vrmglk|,k\ge k_{o}\}$ stated in Proposition \ref{prop: w low k linear growth}.  To prove the supersolution property, it  suffices to show that it holds for each $\vrmglk$, with $k\ge k_{o}$,  and then to apply standard stability results, see e.g.~\cite{Bar94}.

\noindent{\bf a.} We first prove the supersolution property on $[0,T)\x \R$. We adapt the arguments of \cite{cheridito2005multi} to our context.
 Let us   consider a $C^{\infty}_{b}$ test function $\vp$ and  $(t_{0},x_{0})\in [0,T)\x \R$ such that
$$
\text{(strict)}\min_{[0,T)\times\mathbb{R}} (\vrmglk-\vp)=(\vrmglk-\vp)(t_0, x_0)=0.
$$
Recall that $\vrmglk$ is lower-semicontinuous by Proposition \ref{prop: existence faible}.

Because the infimum is achieved in the definition  of $\vrmglk$, by the afore-mentioned proposition, there exists  $|y_{0}|\le k$ and  $\tilde \P\in \tilde \Ac_{k}\cap \tilde \Gc_{k}(t_{0},x_{0},v_{0},y_{0})$,  such that $v_{0}:=c_{0}+y_{0}x_{0}=\vrmglk(t_{0},x_{0})$ for some $c_{0}\in \R$.  Let us set $(\tilde X,\tilde Y,\tilde V):=(\tilde X^{x_{0},\tilde \phi},\tilde Y^{\tilde \phi}, \tilde V^{x_{0},v_{0},\tilde \phi})$ where $\tilde \phi=(y_{0},\tilde a,\tilde b)$.    Let $\theta_{o}$ be a stopping time for the augmentation of the raw filtration $\tilde {\mathbb F}^{\circ}$, and define
$$
\theta :=\theta_{o} \wedge \theta_{1} \mbox{ with } \theta_{1}:=\inf\{s: |\tilde{X}_s-x_0| \ge 1\}.
$$
 Then, it follows from Proposition \ref{prop: gdp1} below that
$$
\tilde{V}_{\theta_{o}} \ge \vrmglk(t_{0}+\theta_{o}, \tilde{X}_{\theta_o}) \ge \vp(t_{0}+\theta_o, \tilde{X}_{\theta_o}),
$$
 {in which here and hereafter inequalities are taken in the $\tilde \P$-a.s.~sense.}
After applying It\^{o}'s formula twice, the above inequality reads:
\begin{equation}\label{eq: deb sur sol inega at theta eta}
\int_{0}^{{\theta}}\ell_{s}\,ds+\int_{0}^{{\theta}}\left(y_0-\partial_{x}\vp(t_{0},x_{0})+\int_{0}^{s} m_{r} dr +\int_{0}^{s} n_{r} d\tilde X_{r}\right)d\tilde X_{s}  \ge 0.
\end{equation}
where
\b*
&\ell  := \frac 12 \tilde a^{2}f(\tilde X)-\mathcal{L}^{\tilde a}\vp(t_{0}+\cdot, \tilde{X}_\cdot) \;,\;
m  := \mu_{Y}^{\tilde a,\tilde b}(\tilde X)-\mathcal{L}^{\tilde a}\partial_{x}\vp(t_{0}+\cdot, \tilde{X}_\cdot)& \\
&n :=  \gamma^{\tilde a}_{Y}(\tilde X)-\partial^{2}_{xx}\vp(t_{0}+\cdot, \tilde{X}_\cdot), &
\e*
with
$$
\mathcal{L}^{\tilde a}:=\partial_{t} +\frac12 (\sigma_{X}^{\tilde a})^{2} \partial^{2}_{xx}
$$
For the rest of the proof, we recall  \reff{eq: cond weak 2}.
Together with \reff{eq: H1} and Remark \ref{rem: borne a et sigma a},  this implies that $\sigma_{X}^{\tilde a}(\tilde X)$, $ \sigma_{X}^{\tilde a}(\tilde X)^{-1}$ and $\mu_{X}^{\tilde a,\tilde b}(\tilde X)$ are $\tilde \P$-essentially  bounded. After performing an equivalent  change of measure, we can thus find  $\check \P\sim \tilde \P$ and a $\check \P$-Brownian motion $\check W$ such that:
\begin{equation}
\tilde X=\int_{0}^{\cdot} \sigma_{X}^{\tilde a_{s}}(\tilde X_{s})d\check W_{s}.
\end{equation}
Clearly, both $\check \P$ and $\check W$ depend on $(\tilde a,\tilde b,y_{0})$.

\noindent {\bf 1.} We first show  that $y_{0}=\partial_{x}\vp(t_{0},x_{0})$, and therefore
\begin{equation}\label{eq: deb sur sol inega at theta eta sans y-vx}
\int_{0}^{{\theta}}\ell_{s}\,ds+{\int_{0}^{{\theta}}\int_{0}^{s} m_{r} dr d\tilde X_{s}}+ \int_{0}^{{\theta}} \int_{0}^{s} n_{r} d\tilde X_{r}d\tilde X_{s}  \ge 0.
\end{equation}
 Let $\check \P^{\lambda}\sim \check \P$ be the measure under which
$$
\check W^{\lambda}:=\check W+\int_{0}^{\cdot }\lambda{[\sigma_{X}^{\tilde a_{s}}(\tilde X_{s})]^{-1}}(y_{0}-\partial_{x}\vp(t_{0},x_{0}))ds
$$
is a $\check \P^{\lambda}$-Brownian motion. Consider the case $\theta_{o}:=\eta>0$. Since all the coefficients are bounded, taking expectation under $\check \P^{\lambda}$ and using \reff{eq: deb sur sol inega at theta eta} imply
\begin{align*}
C'\eta&\ge \lambda (y_{0}-\partial_{x}\vp(t_{0},x_{0}))^{2}\E^{\check \P^{\lambda}}\left[\theta\right]
\\
&{+\E^{\check \P^{\lambda}}\left[ \int_{0}^{{\theta}}\left( \int_{0}^{s} m_{r} dr +\int_{0}^{s} n_{r} d\tilde X_{r}\right)\lambda (y_{0}-\partial_{x}\vp(t_{0},x_{0}))ds \right]}
\end{align*}
for some $C'>0$. We now divide both sides by $\eta$ and use the fact that $(\eta \wedge \theta_{1})/\eta \to 1$ $\check \P^{\lambda}$-a.s.~as $\eta\to 0$ {to obtain
\begin{align*}
C'&\ge \lambda (y_{0}-\partial_{x}\vp(t_{0},x_{0}))^{2}.
\end{align*}
Then, we send} $\lambda \to \infty$ to deduce that $y_{0}=\partial_{x}\vp(t_{0},x_{0})$.

\noindent {\bf 2.} We now prove that
\begin{equation}\label{eq: born gamma super sol}
\partial_{xx}^{2}\vp(t_{0},x_{0})\le \gamma^{\tilde a_{0}}_{Y}(x_{0})\le \bar \gamma(x_{0}).
\end{equation}
We first consider the  time change
$$
h(t)=\inf\{r\ge 0: \int_{0}^{r} \left[(\sigma^{\tilde a_{s}}_{X}(\tilde X_{s}))^{2}\1_{[{0},\theta]}(s) +\1_{[{0},\theta]^{c}}(s) \right] ds\ge t\}.
$$
Again, $\sigma_{X}^{\tilde a}(\tilde X)$ and $ \sigma_{X}^{\tilde a}(\tilde X)^{-1}$ are essentially bounded by Remark \ref{rem: borne a et sigma a},  so that $h$ is absolutely continuous and its density ${\mathfrak h}$ satisfies
\begin{equation}\label{eq: contrôle changement de temps}
0<\underline {\mathfrak h} t  \le {\mathfrak h}(t):=\left[(\sigma^{\tilde a}_{X}(\tilde X))^{2}\1_{[{0},\theta]}(t)+\1_{[{0},\theta]^{c}}(t) \right]^{-1} \le \bar {\mathfrak h}t
\end{equation}
for some constants $\underline {\mathfrak h}$ and $\bar {\mathfrak h}$, for all $t\ge 0$. Moreover,  $\hat W:= \tilde X_{h}$ is a Brownian motion in the time changed filtration. Let us now take $\theta_{o}:=h^{-1}(\eta)$ for some $0<\eta<1$. Then, \reff{eq: deb sur sol inega at theta eta sans y-vx} reads
\begin{align}\label{eq: deb sur sol inega at theta eta sans y-vx time changed}
0\le &\int_{0}^{{\eta\wedge h^{-1}(\theta_{1})}}\ell_{h(s)}{\mathfrak h}(s)\,ds+{\int_{0}^{\eta\wedge h^{-1}(\theta_{1})} \int_{0}^{s} m_{h(r)} {\mathfrak h}(r) dr d\hat W_{s}}\nonumber\\
&+\int_{0}^{\eta\wedge h^{-1}(\theta_{1})} \int_{0}^{s} n_{h(r)} d\hat W_{r}d\hat W_{s}.
\end{align}
{Since all the involved processes are continuous and bounded, and since $( \eta \wedge h^{-1}(\theta_{1}))/\eta\to 1$ a.s.~as $\eta\to 0$, the above combined with \cite[Theorem A.1 b.~and Proposition A.3]{cheridito2005multi} } implies that
$$
\gamma^{\tilde a_{0}}_{Y}(x_{0})-\partial^{2}_{xx}\vp(t_{0}, x_{0})=\lim_{r\downarrow {0}} n_{h(r)} =\lim_{r\downarrow {0}} n_{r}\ge 0.
$$
Since $\gamma^{\tilde a}_{Y}(\tilde X)\le \bar \gamma(\tilde X)$, this proves \reff{eq: born gamma super sol}.

\noindent {\bf 3.} It remains to show that the first term in the definition of $F[\vp](t_{0},x_{0})$ is also non-negative, recall \reff{eq: pde contrainte interior}.
Again, let us take $\theta_{o}:=h^{-1}(\eta)$ and recall from 2.~that $\lim_{\eta\to 0}(\eta\wedge h^{-1}(\theta_{1}))/\eta=1$ $\check \P$-a.s. Note that $\tilde a$ being of the form \reff{eq: cond weak 1} with the condition \reff{eq: cond weak 2}, it satisfies \cite[Condition (A.2)]{cheridito2005multi}{, and so does $n$}.  Using {\cite[Theorem A.2 and Proposition A.3]{cheridito2005multi}} and \reff{eq: deb sur sol inega at theta eta sans y-vx time changed}, we then deduce that
$
\ell_{{0}}{\mathfrak h}({0})-\frac12 n_{{0}}\ge 0.
$
Hence, \reff{eq: contrôle changement de temps} and direct computations {based on \reff{eq: def gamma a}} imply
\begin{align*}
0&\le  \frac 12 {\tilde a}^{2}_{{0}}{f(x_{0})}-\mathcal{L}^{\tilde a_{{0}}}\vp(t_{0}, x_{0})-\frac12\left(\gamma^{\tilde a_{{0}}}_{Y}(x_{0})-\partial^{2}_{xx}\vp(t_{0}, x_{0})\right)(\sigma_{X}^{\tilde a_{{0}}}(x_{0}))^{2}\\
&=  \frac 12 \tilde a^{2}_{{0}}{f(x_{0})}-\partial_{t}\vp(t_{0}, x_{0})-\frac12 \gamma^{\tilde a_{{0}}}_{Y}(x_{0})(\sigma_{X}^{\tilde a_{{0}}}(x_{0}))^{2}
\\
&=-\partial_{t}\vp(t_{0}, x_{0})-\frac12 \frac{\sigma^{2}(x_{0})}{1-f(x_{0})\gamma^{\tilde a_{{0}}}_{Y}(x_{0}) } \gamma^{\tilde a_{{0}}}_{Y}(x_{0})
\\
&\le -\partial_{t}\vp(t_{0}, x_{0})-\frac12 \frac{\sigma^{2}(x_{0})}{1-f(x_{0}) \partial^{2}_{xx}\vp(t_{0}, x_{0})}  \partial^{2}_{xx}\vp(t_{0}, x_{0}),
\end{align*}
in which we use the facts that $\partial^{2}_{xx}\vp(t_{0}, x_{0})\le \gamma^{\tilde a_{{0}}}_{Y}(x_{0}) \le \bar \gamma(x_{0})$ and   $z\mapsto z/(1-f(x_{0})z)$ in non-decreasing on $(-\infty,\bar \gamma(x_{0})]\subset (-\infty,1/f(x_{0}))$, for the last inequality.

\noindent{\bf b.} We now consider the boundary condition at $T$. Since $\vrmglk$ is a supersolution of $\bar \gamma-\partial^{2}_{xx}\vp\ge 0$ on $[0,T)\x \R$, the same arguments as in \cite[Lemma 5.1]{CVT} imply that $\vrmglk-\bar \Gamma$ is concave for any twice differentiable function $\bar \Gamma$ such that $\partial_{xx}^{2} \bar \Gamma=\bar \gamma$.
The function $\vrmglk$ being lower-semicontinuous, the map
$$
x\mapsto G(x):=\liminf_{\tiny\begin{array}{c}t'\to T,x'\to x\\t'<T\end{array}} \vrmglk(t',x')
$$
is such that $G\ge g$ and $G-\bar \Gamma$ is concave. Hence,
 $G=(G-\bar \Gamma)^{\rm conc}+\bar \Gamma$ $\ge$ $(g-\bar \Gamma)^{\rm conc}+\bar \Gamma$ $=\hat g$.
\ep
\vs2

It remains to state the dynamic programming principle used in the above proof.

\begin{proposition}\label{prop: gdp1}
Fix $(t,x,v,y)\in [0,T]\x \R^{2}\x [-k,k]$ and let $\theta$ be a stopping time for the $\tilde \P$-augmentation of $\tilde {\mathbb F}^{\circ}$ that takes  $\tilde \P$-a.s.~values in $[0,T-t]$.
Assume that $\tilde \P\in   \tilde \Ac_{k}\cap \tilde \Gc_{k,\bar \gamma}(t,x,v,y)$. Then,
\b*
\tilde  V^{x,v,\tilde \phi}_{\theta}\ge \vrmglk(t+\theta, \tilde X^{x,\tilde \phi}_{\theta})\;\;\tilde \P-{\rm a.s.},
 \e*
 in which $\tilde \phi:=(y,\tilde a,\tilde b)$.
\end{proposition}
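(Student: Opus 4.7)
The plan is to prove this one-sided geometric dynamic programming principle by a standard conditioning argument, using regular conditional probabilities to split the trajectory at time $\theta$ and produce, $\tilde\P$-almost surely on $\omega$, a new admissible probability measure that witnesses the super-hedging property starting from the point $(t+\theta(\omega),\tilde X_\theta(\omega))$ with wealth $\tilde V_\theta(\omega)$ and stock position $\tilde Y_\theta(\omega)$.

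Concretely, I would first pass to the $\tilde\P$-augmentation $\tilde{\mathbb F}$ of $\tilde{\mathbb F}^{\circ}$ (since $\theta$ is defined with respect to it) and invoke the existence of a regular conditional probability kernel $(\tilde\P^{\omega})_{\omega}$ of $\tilde\P$ given $\tilde{\mathcal F}_{\theta}$. On a $\tilde\P$-full set $\Omega_{0}$, the random variables $\theta(\omega),\tilde X_\theta(\omega),\tilde Y_\theta(\omega),\tilde V_\theta(\omega)$, and the initial values $\tilde a_\theta(\omega),\tilde\alpha_\theta(\omega),\tilde\beta_\theta(\omega)$ are constants under $\tilde\P^{\omega}$, and $(\tilde W_{\theta+s}-\tilde W_\theta)_{s\ge 0}$ is a $\tilde\P^{\omega}$-Brownian motion (by the strong Markov property of Brownian motion together with Lévy's characterization). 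I then shift all coordinates in time by $\theta(\omega)$ and define, on the canonical space, the image measure $\check\P^{\omega}$ of the shifted process $(\tilde\zeta_{\theta+\cdot},\tilde W_{\theta+\cdot}-\tilde W_\theta)$ under $\tilde\P^{\omega}$.

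Next I would verify that, for $\tilde\P$-a.e.~$\omega$, the measure $\check\P^{\omega}$ lies in $\tilde\Ac_{k}\cap\tilde\Gc_{k,\bar\gamma}(t+\theta(\omega),\tilde X_\theta(\omega),\tilde V_\theta(\omega),\tilde Y_\theta(\omega))$. The boundedness condition \reff{eq: cond weak 2} and the Itô representation \reff{eq: cond weak 1} transfer immediately from $\tilde\P$ to $\check\P^{\omega}$ (with initial value $\tilde a_\theta(\omega)$). The conditional modulus condition \reff{eq: cond weak 3} also passes to $\check\P^{\omega}$ because it is a tower-type estimate that survives conditioning on $\tilde{\mathcal F}_\theta$ (here one uses that $\theta\le T-t\le T$ and that the original inequality holds simultaneously for all $r$ and $\delta$). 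Pathwise uniqueness of the SDE \reff{eq: S lim conti}--\reff{eq: def Y sans saut}--\reff{eq: V lim conti}, together with the Lipschitz/boundedness assumption \reff{eq: H1} and the starting point $(\tilde X_\theta(\omega),\tilde Y_\theta(\omega),\tilde V_\theta(\omega))$, then identifies the new triple under $\check\P^{\omega}$ with the original triple $(\tilde X_{\theta+\cdot},\tilde Y_{\theta+\cdot},\tilde V_{\theta+\cdot})$; hence the terminal constraint $\tilde V_{T-t}\ge g(\tilde X_{T-t})$ and the gamma bounds in \reff{eq: def sur rep sous contrainte faible} hold $\check\P^{\omega}$-a.s., giving membership in $\tilde\Gc_{k,\bar\gamma}$.

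Finally, writing $\tilde V_\theta(\omega)=c(\omega)+\tilde Y_\theta(\omega)\tilde X_\theta(\omega)$ with $c(\omega):=\tilde V_\theta(\omega)-\tilde Y_\theta(\omega)\tilde X_\theta(\omega)$ and noting that $|\tilde Y_\theta|\le k$ by \reff{eq: cond weak 2}, the definition of $\vrmglk$ yields
\begin{equation*}
\tilde V_\theta(\omega)\ge \vrmglk\bigl(t+\theta(\omega),\tilde X_\theta(\omega)\bigr)\qquad\text{for $\tilde\P$-a.e.~$\omega$,}
\end{equation*}
which is the claim. The main technical obstacle is the passage of the conditional modulus estimate \reff{eq: cond weak 3} to the regular conditional probability $\check\P^{\omega}$ for a.e.~$\omega$; this is handled by observing that for countably many rational $r$ and $\delta$ the conditional expectation $\E^{\tilde\P}[\,\cdot\mid\tilde{\mathcal F}_{\theta+r}]$ on a full set coincides with $\E^{\check\P^{\omega}}[\,\cdot\mid\check{\mathcal F}_{r}]$, and then using continuity in $(r,\delta)$ of both sides (which follows from the continuity of $\tilde\zeta$ and a monotone class argument). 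The measurability of $\omega\mapsto\check\P^{\omega}$ in the appropriate starting point, needed to conclude a.s.~from ``$\tilde\P$-a.e.~$\omega$'', is standard for regular conditional kernels.
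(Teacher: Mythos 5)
Your strategy—take a regular conditional probability, shift time, check that the conditioned/shifted measure is admissible and super-replicating from the new starting point, and read off the inequality from the definition of $\vrmglk$—is the same as the paper's. The two differences between what you wrote and what the paper does are, however, exactly where the technical content lives.

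First, the paper does not condition at the random time $\theta$ directly. Its very first step is to reduce to a \emph{deterministic} time: since $\vrmglk$ is lower-semicontinuous (Proposition \ref{prop: existence faible}) and all relevant processes have continuous paths, one approximates $\theta$ from above by stopping times with values in a finite grid; on each event $\{\theta_n = r_i\}$ one is reduced to the case $\theta \equiv r$. This reduction matters. Your argument conditions on the augmented $\sigma$-algebra $\tilde{\mathcal F}_\theta$ at a random time, which is awkward for regular conditional probabilities (one wants a countably generated conditioning $\sigma$-algebra; the raw $\tilde\Fc^\circ_r$ at a deterministic $r$ is, whereas $\tilde{\mathcal F}_\theta$ with all null sets added is not, and the distinction between $\tilde\Fc^\circ_\theta$ and $\tilde{\mathcal F}_\theta$ has to be handled explicitly). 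By working at a fixed $r$, the paper sidesteps all of this and conditions on $\tilde\Fc^\circ_r$.

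Second, the place where you write ``pathwise uniqueness\ldots identifies the new triple under $\check\P^\omega$ with the original triple'' is the crux, and it is not just a pathwise-uniqueness argument. The system $(\tilde X,\tilde Y,\tilde V)$ is driven by the coordinate Brownian motion \emph{and} by the control coordinates $(\tilde a,\tilde b)$, which are only known to satisfy \reff{eq: cond weak 1}--\reff{eq: cond weak 3}; the object that must be conditioned is a controlled martingale problem, not a classical SDE with exogenous coefficients. The paper invokes a dedicated result for exactly this situation, \cite[Theorem 3.3]{claisse2014}, to conclude that the super-replication inequality and the gamma constraint remain $\tilde\P^r_\omega$-a.s.\ true after conditioning (possibly after enlarging the null set $N$). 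If you want to stay with your formulation, you would need to either cite such a conditioning result for controlled martingale problems, or give the full measurable-selection/functional-representation argument that makes ``pathwise uniqueness'' do that work; as written the step is a gap, because the strong Markov property you quote is for the Brownian coordinate alone, not for the full controlled system.
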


 \proof  Since $\vrmglk$ is lower-semicontinuous and all the involved processes have continuous paths, up to  approximating $\theta$ by a sequence of stopping times valued in finite time grids,  it suffices to prove our claim in the case $\theta\equiv r \in [0,T-t]$.   Let $\tilde \P_{\omega}$ be a regular conditional probability given $\tilde \Fc_{r}^{\circ}$ for $\tilde \P$. It coincides with $\tilde \P[\cdot|\tilde \Fc_{r}^{\circ}](\omega)$ outside a set $N$ of $\tilde \P$-measure zero.
 Then,   for all $\omega\notin N$,  $0\le \delta\le 1$ and $r\ge 0$ the conditions \reff{eq: cond weak 1}-\reff{eq: cond weak 2}-\reff{eq: cond weak 3} hold for  $\tilde \P^{r}_{\omega}$  defined on $C(\R_{+})^{5}$ by
 $$
  \tilde \P^{r}_{\omega}[\omega'\in A]=\tilde \P_{\omega}[\omega'_{r+\cdot} \in A].
 $$
 Moreover, \cite[Theorem 3.3]{claisse2014} ensures that, after possibly modifying $N$,
 \b*
 &&\tilde \P^{r}_{\omega}\left[\tilde V^{\xi_{r}(\omega),\vartheta_{r}(\omega),\hat \phi(\omega)}_{T-(t+r)}\ge g(\tilde X^{\xi_{r}(\omega),\hat \phi(\omega) }_{T-(t+r)})\right]=1\;\\
 &&\mbox{ and }\;
 \tilde \P^{r}_{\omega}\left[ \gamma^{\tilde a}_{Y}(\tilde X^{\xi_{r}(\omega),\hat \phi(\omega) })\le \bar \gamma (\tilde X^{\xi_{r}(\omega),\hat \phi(\omega) })\mbox{ on } \R_{+}\right]=1,
 \e*
 for $\omega \notin N$,
 in which
 $$
 (\xi_{r},\vartheta_{r},\hat \phi):=(\tilde X^{x,\tilde \phi }_{r},\tilde  V^{x,v,\tilde \phi}_{r},(\tilde  Y^{x,\tilde \phi}_{r},\tilde a,\tilde b)).
 $$
 This shows that $\vartheta_{r}(\omega)\ge \vrmglk(t+r,\xi_{r}(\omega))$ outside the null set $N$, which is the required result.
 \ep

\subsection{Conclusion of the proof and construction of almost optimal strategies}

We first conclude the proof of Theorem \ref{thm: main}.

\noindent\textbf{Proof of Theorem \ref{thm: main}.}   Proposition \ref{prop: conv bar v eps to bar v} and  Theorem \ref{thm:Reg} imply that $\vrmgh\ge \vrmg$ in which $\vrmgh$ has linear growth and is a continuous viscosity solution of  \reff{eq: VisEqBd}. On the other hand, Proposition \ref {prop: v weak is lower bound} and  Theorem \ref {thm: weak v is supersolution} imply that $\vrmgl\le \vrmg$ on $[0,T)\x \R$ in which $\vrmgl$ has linear growth and is a  viscosity supersolution of  \reff{eq: VisEqBd}. By the comparison result of Theorem \ref{thm: Comp} applied with $\epsilon=0$, $\vrmgl\ge \vrmgh$. Hence,
\be\label{eq: ega des v}
\vrmg=\vrmgl=\vrmgh\;\mbox{  on $[0,T)\x \R$ and } \vrmgl=\vrmgh \mbox{  on $[0,T]\x \R$}
\ee
Since $\vrmgh$ is continuous, this shows that  
$$
\lim_{\tiny \begin{array}{c}(t',x')\to (T,x)\\ t'<T\end{array}} \vrmg(t',x')=\vrmgh(T,x)=\vrmgl(T,x).
$$
Hence,   $\vrmg$ is a viscosity solution of  \reff{eq: VisEqBd}, with linear growth.
\ep

\begin{remark}[Almost optimal controls]\label{rem: almost optimal controls}  In  the  proof of Theorem \ref{thm:Reg}, we have constructed a super-hedging strategy starting from $\vrmgbed(t,x)$. Since  $\vrmgbed(t,x)\to \vrmgh(t,x)=\vrmg(t,x)$ as $\delta,\epsilon\to 0$ and $K \to \infty$, this provides a way to construct super-hedging strategies associated to any initial wealth $v> \vrmg(t,x)$.
\end{remark}

\section{Adding a resilience effect}\label{sec: resilience}

In this section, we explain how a resilience effect can be added to our model. In the discrete rebalancement setting, we replace the dynamics \reff{eq: dyna Sn} by
\b*
\Sn  = X_{0} + \int_{0}^{\cdot}\mu(\Sn_{s})ds +\int_{0}^{\cdot} \sigma(\Sn_{s}) dW_{s} + R^{{n}},
\e*
in which $R^{{n}}$ is defined by
$$
R^{{n}}=R_{0}+ \sum_{i=1}^{n}\1_{[\ti,T]} \deltan_{\ti} f(\Sn_{\ti-}) -\int_{0}^{\cdot} \rho R^{{n}}_{s}ds,
$$
for some $\rho>0$ and $R_{0}\in \R$.
The process $R^{{n}}$ models the impact of past trades on the price, the last term in its dynamics is the resilience effect. Then, the continuous time dynamics becomes
\begin{align*}
X&=X_{0 }+\int_{0}^{\cdot} \sigma(X_{s}) dW_{s} + \int_{0}^{\cdot} f(X_{s}) dY_{s}+\int_{0}^{\cdot} (\mu(X_{s})+a_{s}(\sigma f')(X_{s})-\rho R_{s} )ds  \\
R&=R_{0}+ \int_{0}^{\cdot} f(X_{s}) dY_{s}+\int_{0}^{\cdot} (a_{s}(\sigma f')(X_{s})-\rho R_{s} )ds\\
  V&=V_{0 }+ \int_{0}^{\cdot} Y_{s}dX_{s}+\frac12 \int_{0}^{\cdot} a^{2}_{s} f(X_{s}) ds.
\end{align*}
This is obtained as a straightforward extension of \cite[Proposition 1.1]{bouchard2015almost}.
\def\wrmg{{\rm v}^{R}_{\bar \gamma}}
\vs2

Let $\wrmg(t,x)$ be defined as the super-hedging price $\vrmg(t,x)$ but for these new dynamics and for $R_{t}=0$. The following states that $\wrmg=\vrmg$, i.e.~adding a resilience effect does not affect the super-hedging price.
\vs2

\begin{proposition}\label{prop: resilience}   $\vrmg= \wrmg$ on $[0,T]\x \R$.
\end{proposition}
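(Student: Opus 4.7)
\proof
The plan is to verify that the entire analysis of Sections \ref{sec: dynamics}--\ref{sec: hedging} applies, essentially verbatim, to the resilience model, so that $\wrmg$ is characterised as the unique viscosity solution with linear growth of the same equation \reff{eq: VisEqBd}; by the comparison principle of Theorem \ref{thm: Comp} this forces $\wrmg=\vrmg$. The central observation is that the resilience perturbs only the drift of $X$ (by $-\rho R\,dt$), leaving both the quadratic variation $d\langle X\rangle=(\sigma^{a}_{X})^2\,dt$ and the wealth equation $dV=Y\,dX+\tfrac12 a^2 f\,dt$ structurally unchanged. Consequently, for any smooth $w(t,x)$ and any control realising $Y=\partial_{x}w(\cdot,X)$, the It\^o expansion of $V-w(\cdot,X)$ depends on the drift of $X$ only through the cancelling combination $(Y-\partial_{x}w)\cdot\text{drift}$, and its deterministic integrand reduces to $\tfrac12 a^2 f-\partial_{t}w-\tfrac12 (\sigma^{a}_{X})^2\partial^{2}_{xx}w$, identical to the non-resilience case.

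For the upper bound $\wrmg\le\vrmg$, I will run the verification argument of Theorem \ref{thm:Reg} directly in the resilience model. With the Markovian controls $(\hat a,\hat b)$ built from the smooth supersolution $\vrmgbed$ of Proposition \ref{prop: constr v eps K delta par smoothing}, launched from $R_{t}=0$, the observation above yields $V^{R}_{T}-\vrmgbed(T,X^{R}_{T})\ge 0$ by the supersolution property of $\vrmgbed$, so $\wrmg(t,x)\le\vrmgbed(t,x)$ and passing to the limit in $\delta,\epsilon,K$ gives $\wrmg\le\vrmgh=\vrmg$. For the lower bound $\wrmg\ge\vrmg$, I will reproduce the weak formulation of Section \ref{sec: weak form}, augmenting the canonical state with the resilience process $R$ (initialised at $0$). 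The analogues of Propositions \ref{prop: v weak is lower bound}--\ref{prop: gdp1} transfer without substantive change. In the supersolution derivation of Theorem \ref{thm: weak v is supersolution}, the base identity $\int\ell\,ds+\int(\tilde Y-\vp_{x})\,d\tilde X\ge 0$ is preserved by the observation above; expanding $\tilde Y-\vp_{x}(t_{0},x_{0})$ only produces an extra term $\gamma^{\tilde a}_{Y}\rho R$ in the analog of $m$, which (i) contributes at order $o(\eta)$ to the Girsanov expansion of step 1, leaving $y_{0}=\partial_{x}\vp(t_{0},x_{0})$ unchanged; (ii) does not affect step 2, which only sees the unchanged diffusion coefficient; and (iii) evaluates to $\gamma^{\tilde a_{0}}_{Y}(x_{0})\rho R_{0}=0$ at the initial state in step 3. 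The $T$-boundary argument is identical. Hence the weak-formulation value is a viscosity supersolution of \reff{eq: VisEqBd} with linear growth, and Theorem \ref{thm: Comp} combined with the upper bound yields $\wrmg\ge\vrmg$.

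The main obstacle is handling the additional state variable $R$ in the weak-formulation DPP, since $R$ is generally nonzero at intermediate times even when $R_{t}=0$ initially. This is managed by viewing the weak-formulation value as a function of $(t,x,r)$ and observing that its $r$-dependence disappears in the PDE thanks to the drift-cancellation identity above; the fact that the reference point of the supersolution derivation always sits at $r=0$ (by the initial condition) is precisely what makes the extra $\rho R$ correction in step 3 vanish. Combining both inequalities yields $\wrmg=\vrmg$.
\ep
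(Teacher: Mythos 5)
Your upper-bound step ($\wrmg\le\vrmg$) is the same as the paper's: the drift of $X$ is irrelevant to the verification identity because of the cancellation $(Y-\partial_{x}\vrmgbed)\cdot\text{drift}$, so the argument of Theorem \ref{thm:Reg} applies verbatim.

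The lower-bound step ($\wrmg\ge\vrmg$) has a genuine gap. You correctly identify that the obstacle is the intermediate-time dependence on $R$, but the way you dispose of it — ``viewing the weak-formulation value as a function of $(t,x,r)$ and observing that its $r$-dependence disappears in the PDE thanks to the drift-cancellation identity'' — is not a proof that the value function is $r$-independent. The drift cancellation tells you what the \emph{formal} PDE looks like once you assume there is no $\partial_{r}$ derivative to worry about; it does not tell you that the genuine 3-dimensional weak-formulation value $\vrmglk^{R}(t,x,r)$ is constant in $r$. Without that, the DPP of Proposition \ref{prop: gdp1} delivers $\tilde V_{\theta}\ge\vrmglk^{R}(t_{0}+\theta,\tilde X_{\theta},\tilde R_{\theta})$ with $\tilde R_{\theta}\neq 0$, and the supersolution argument no longer closes against a test function of $(t,x)$ alone; nor can you feed a 3-dimensional supersolution into the 2-dimensional comparison principle of Theorem \ref{thm: Comp}. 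Note also that $R$ has a diffusion component (through $f(X)\,dY$), so a test function of $(t,x,r)$ would pick up non-trivial $\partial_{r}$, $\partial^{2}_{rr}$ and $\partial^{2}_{xr}$ terms in the It\^o expansion — the ``extra $\rho R$'' in $m$ is not the only correction.

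The paper's route avoids all of this: instead of re-deriving a PDE, it performs a Girsanov change of measure $W^{R}:=W-\int\rho R/\sigma(X)\,ds$ under which the resilience drift vanishes, and then observes that any admissible control for $\wrmg$ (seen under $\Q^{R}$) becomes an element of $\tilde\Ac_{k}\cap\tilde\Gc_{k,\bar\gamma}(t,x,v,y)$ in the original weak formulation \emph{without} resilience, since the classes $\tilde\Ac_{k}$ and $\tilde\Gc_{k,\bar\gamma}$ are defined by almost-sure path conditions and the two measures are equivalent. This yields $\wrmg\ge\vrmgl=\vrmg$ directly and, incidentally, \emph{is} the rigorous content of the $r$-independence you need. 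Your argument would be repaired by inserting this Girsanov step to justify the $r$-independence before invoking the comparison principle.
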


\proof {\bf 1.} To show that $\vrmg\ge  \wrmg$,  it suffices to reproduce the arguments of the proof of Theorem \ref{thm:Reg} in which the drift part of the dynamics of $X$ does not play any role. More precisely, these arguments show that  ${\vrmgh}\ge  \wrmg$. Then, one uses the fact that $\vrmg={\vrmgh}$, by \reff{eq: ega des v}. \\
{\bf 2.} As for the opposite inequality, we use the weak formulation of Section  \ref{sec: weak form} and a simple Girsanov's transformation.
 For ease of notations, we restrict to $t=0$. Fix $v>\wrmg(0,x)$, for some $x\in \R$. Then, one can find $k\ge 1$,  $(c,y)\in \R\x [-k,k]$ satisfying $v=c+yx$, and $(a,b)\in \Ac_{k,\bar \gamma}(0,x) $ such that $V_{T}\ge g(X_{T})$,  with $(V,X,Y,R)$ defined by the corresponding initial data and controls. We let
$$
a=a_{0}+\int_{0}^{\cdot}\beta_{s} ds +\int_{0}^{\cdot} \alpha_{s} dW_{s}
$$
be the decomposition of $a$ into an It\^{o} process, see Section \ref{sec: impact rule and discrete time trading}.
 Let $\Q^{R}\sim \P$ be the probability measure under which $W^{R}:=W-\int_{0}^{\cdot} (\rho{R_{s}}/\sigma(X_{s}))ds$ is a $\Q^{R}$-Brownian motion, recall \reff{eq: H1}.
Then,
\begin{align*}
X&=X_{0 }+\int_{0}^{\cdot} \sigma(X_{s}) dW^{R}_{s} + \int_{0}^{\cdot} f(X_{s}) dY_{s}+\int_{0}^{\cdot} (\mu(X_{s})+a_{s}(\sigma f')(X_{s}) )ds  \\
Y&=Y_{0}+\int_{0}^{\cdot} (b_{s}+a_{s}\rho {R_{s}}/\sigma(X_{s}) )ds+ \int_{0}^{\cdot} a_{s}dW^{R}_{s}\\
a&=a_{0}+\int_{0}^{\cdot}(\beta_{s}+ \alpha_{s}\rho {R_{s}}/\sigma(X_{s}))ds +\int_{0}^{\cdot} \alpha_{s} dW^{R}_{s}\\
  V&=V_{0 }+ \int_{0}^{\cdot} Y_{s}dX_{s}+\frac12 \int_{0}^{\cdot} a^{2}_{s} f(X_{s}) ds.
\end{align*}
Upon seeing $(a,b+a\rho {R }/\sigma(X),\alpha,\beta+ \alpha\rho{R }/\sigma(X),W^{R} )$ as a generic element of the canonical space $C([0,T])^{5}$ introduced in Section  \ref{sec: weak form}, then $\Q^{R}$ belongs to  $\tilde \Ac_{k}\cap \tilde \Gc_{k,\bar \gamma}(t,x,v,y)$, and therefore $v>\vrmgl(0,x)$. Hence, $\wrmg(0,x)\ge  \vrmgl(0,x)$, and thus  $\wrmg(0,x)\ge  \vrmg(0,x)$ by  \reff{eq: ega des v}.
\ep

\section{Numerical approximation and examples}

In this section, we  provide an example of  numerical schemes that converges towards the {unique} continuous viscosity solution of \reff{eq: VisEqBd} with linear growth. The scheme is then exemplified using two numerical applications in the case of constant market impact and gamma constraint.

\subsection{Finite difference scheme}\label{sec: finite diff scheme}
Given a map $\phi$ and $h:=(h_{t},h_{x})\in (0,1)^{2}$, define
\begin{align*}
L^{h}_{1}(t,x,y,\phi)&:=-\frac{\phi(t+h_{t},x)-y}{h_{t}}-\frac{\sigma^{2}(x)  G^{h}(t,x,y,\phi)}{2(1-f(x)G^{h}(t,x,y,\phi))}\\
L^{h}_{2}(t,x,y,\phi)&:=\bar \gamma(x)-G^{h}(t,x,y,\phi)
\end{align*}
where
$$
G^{h}(t,x,y,\phi):=\frac{\phi(t+h_{t},x+h_{x})+\phi(t+h_{t},x-h_{x})-2y}{h_{x}^{2}}.
$$

The numerical scheme is set on the grid $\pi_{h}:=\{(t_{i},x_{j})=(ih_t, \underline x+jh_x): i\le n_{t},j\le n_{x}\}$, with $n_{t}h_t=T$ for some $n_{t}\in \mathbb{N}$, and $n_{x}h_{x}=\overline x-\underline x$, for some real numbers $\underline x<\overline x$. To paraphrase,   $\vrmg^{h}$ is defined on $\pi_{h}$ as the solution of
\be\label{eq: numscheme}
S(h,t_i,x_j,\vrmg^{h}(t_{i},x_{j}),\vrmg^{h}) &=&0 \;\mbox{ for }~ i<n_{t}, 1\le j\le n_{x}-1 \\
\vrmg^{h}&=&\hat{g}\;\mbox{ on } \pi_{h}\cap \{(\{T\}\x \R)\cup( [0,T]\cap \{\underline x,\overline x\})\} \nonumber
\ee
where
$$
S(h,t,x,y,\phi) := (\bar w-y)\vee(y-\underline w)\wedge \min_{l=1,2}\left\{L^{h}_{l}(t,x,y,\phi)\right\}
$$
with $\bar w$ and  $\underline w$ as in Remark \ref{rem: bound en terme w bar underline}.

\begin{theorem}
The equation   \reff{eq: numscheme} admits a unique solution $\vrmg^{h}$, for all $h:=(h_t,h_x)\in (0,1)^{2}$. Moreover, if  $h_t/h_x^{2}\to 0$ and $h_{x}^{2}\to 0$, then $\vrmg^{h}$ converges locally uniformly to the unique   continuous viscosity solution of \reff{eq: VisEqBd} that has linear growth.
\end{theorem}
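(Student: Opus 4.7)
The plan is to apply the Barles--Souganidis framework (see e.g.~\cite{Bar94}): verify that the scheme is well-posed, monotone, stable and consistent, and then combine with the comparison principle of Theorem \ref{thm: Comp} to identify the half-relaxed limits of $\vrmg^h$ with $\vrmg$.

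\textbf{Step 1: Existence and uniqueness of $\vrmg^h$.} I proceed by backward induction on the time layer. The terminal and spatial boundary values are fixed by $\hat g$, and, given $\vrmg^h(t_{i+1}, \cdot)$, the definition of $\vrmg^h(t_i, x_j)$ at an interior node reduces to the scalar equation $S(h, t_i, x_j, y, \vrmg^h) = 0$ in the single unknown $y$. Direct differentiation yields $\partial_y L^h_2 = 2/h_x^2 > 0$ and, on the set $\{1 - f(x_j) G^h > 0\}$, $\partial_y L^h_1 = 1/h_t + \sigma^2(x_j)/[h_x^2 (1-f(x_j) G^h)^2] > 0$. On the singular region $\{G^h \geq 1/f(x_j)\}$ one has $L^h_2 < 0$, so extending $L^h_1$ by $+\infty$ there makes $y \mapsto \min_{l} L^h_l$ continuous and non-decreasing, with the correct limits at $\pm \infty$. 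Together with the outer truncation by $\underline w, \bar w$, this produces a unique root in $[\underline w(t_i, x_j), \bar w(t_i, x_j)]$.

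\textbf{Step 2: Monotonicity, stability, consistency.} Applying the chain rule through $G^h$ and using that $M \mapsto -\sigma^2 M/[2(1-fM)]$ is non-increasing on $\{M < 1/f\}$, I check that $L^h_1$ and $L^h_2$ are non-increasing in $\phi$ evaluated at any grid point distinct from $(t,x)$; the truncation by $\underline w, \bar w$ preserves this, giving the Barles--Souganidis monotonicity condition. Stability $\underline w \leq \vrmg^h \leq \bar w$ is immediate from the truncation. For consistency, Taylor expansion at a smooth test function $\varphi$ gives $G^h(t, x, \varphi(t,x), \varphi) = \partial^2_{xx}\varphi(t,x) + (2 h_t/h_x^2)\, \partial_t\varphi(t,x) + O(h_t^2/h_x^2 + h_x^2)$, together with the first-order expansion of the time-difference quotient, so that under the scaling $h_t/h_x^2 \to 0$ and $h_x \to 0$, both $L^h_1$ and $L^h_2$ converge locally uniformly to the two components of $F[\varphi]$ in \reff{eq: pde contrainte interior}.

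\textbf{Step 3: Passage to the limit.} Define the half-relaxed limits $v^*(t,x) := \limsup_{(h, t', x') \to (0, t, x)} \vrmg^h(t', x')$ and $v_*(t,x) := \liminf_{(h, t', x') \to (0, t, x)} \vrmg^h(t', x')$. Standard Barles--Souganidis arguments (see \cite{Bar94}) then give that $v^*$ is an upper-semicontinuous viscosity subsolution of \reff{eq: VisEqBd} and $v_*$ a lower-semicontinuous supersolution, both of linear growth by stability. The comparison principle of Theorem \ref{thm: Comp} applied with $\epsilon = 0$ yields $v^* \leq v_*$, and since the reverse inequality is trivial, $v^* = v_*$ is a continuous viscosity solution of \reff{eq: VisEqBd} with linear growth. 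Theorem \ref{thm: main} then identifies this common value with $\vrmg$, which is exactly locally uniform convergence.

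\textbf{Main obstacle.} The only genuine difficulty is the singularity of the fully-nonlinear term at $f G^h = 1$, which could a priori break both the solvability of the nodal equation in Step 1 and the consistency check in Step 2. This is resolved cleanly by the gamma-constraint component $L^h_2$ built into the scheme: the structural assumption $\bar\gamma \leq 1/f - \iota$ from \reff{eq: diff bar gamma f} forces $G^h \leq \bar\gamma \leq 1/f - \iota$ at any solution, so that the scheme effectively lives in the parabolic regime where all the manipulations above are smooth and the Barles--Souganidis machinery applies.
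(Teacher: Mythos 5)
Your argument follows the same Barles--Souganidis route as the paper (backward solvability of the nodal equation, monotonicity, stability via $\underline w \le \vrmg^h \le \bar w$, consistency, relaxed limits plus the comparison principle of Theorem \ref{thm: Comp}), so the overall strategy is correct. Two points should nevertheless be flagged.

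In Step~1, the claim that extending $L^h_1$ by $+\infty$ on $\{G^h \ge 1/f(x_j)\}$ makes $y \mapsto \min_l L^h_l$ continuous and non-decreasing is false. As $y$ increases through the threshold $y_c$ where $G^h = 1/f$, on the regular side ($G^h\uparrow 1/f$, $1-fG^h\downarrow 0^+$ with $G^h>0$) the quotient $\sigma^2 G^h/[2(1-fG^h)]\to +\infty$, so $L^h_1\to -\infty$; the minimum therefore has a downward jump to $-\infty$ at $y_c^+$ and is not monotone across the threshold. The conclusion (a unique root) still holds, by the argument the paper actually uses: at any root one must have $\min_l L^h_l = 0$, hence $L^h_2 \ge 0$, i.e.\ $G^h\le\bar\gamma<1/f$, and on $\{L^h_2\ge 0\}$ both $L^h_1$ and $L^h_2$ are strictly increasing in $y$. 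So the flaw is in the justification, not the result.

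In Step~3 you assert that the relaxed limits $v^*$, $v_*$ are, respectively, sub- and supersolutions of \reff{eq: VisEqBd}. What Barles--Souganidis actually delivers is that they are sub- and supersolutions of the equation the scheme is consistent with, namely the truncated equation
$\bigl[(\bar w-\vp)\vee(\vp-\underline w)\wedge F[\vp]\bigr]\1_{[0,T)}+(\vp-\hat g)\1_{\{T\}}=0$.
To pass from this to \reff{eq: VisEqBd} one must argue, as the paper does, that in the class $\underline w\le w\le\bar w$ the notions of sub/super solution for the truncated equation and for \reff{eq: VisEqBd} coincide (using that $\bar w$ is a supersolution and $\underline w$ a subsolution, Remark \ref{rem: bound en terme w bar underline}), and that $\vrmg$ lies in this class. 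This bridging step is elided in your argument and should be made explicit.
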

\begin{proof}
The existence of a   solution, that is bounded  by the map with linear growth $|\bar w|+|\underline w|$, is obvious. We now prove uniqueness. First observe that $L^h_2$ is strictly increasing in its $y$-component, and  that
$$
\frac{\partial L^{h}_{1}}{\partial y}(t,x,y,\phi)=\frac{1}{h_t}+\frac{\sigma^{2}(x)}{h^2_x (1-f(x)G^{h}(t,x,y,\phi))^{2}}>0
$$
on the domain $\{y: L^{h}_{2}(t_{i},x_{j},y,\phi) \ge 0\}$.
Uniqueness of the solution follows.

It is easy to see that $\phi\mapsto S(\cdot,\phi)$ is non-decreasing, so that our scheme is monotone. Consistency is clear.
Moreover, it is not difficult to check that the comparison result of Theorem \ref{thm: Comp}  extends to this equation (there is an equivalence of the notions of super- and subsolutions in the class of functions $w$ such that $\underline w \le w \le \bar w$). It then follows from  \cite[Theorem 2.1]{BaSo91} that $\vrmg^{h}$ converges locally uniformly to the unique continuous viscosity solution with linear growth of
$$\Big[ (\bar w-\vp)\vee(\vp-\underline w)\wedge F[{\vp}]\Big] \1_{[0,T)}+({\vp}-\hat{g})\1_{\{T\}}=0.$$
 In view of \reff{eq: ega des v}, Remark \ref{rem: bound en terme w bar underline} and Theorem \ref{thm: main},  $\vrmg$ is the unique viscosity solution  of the above equation.
\ep
\end{proof}

\subsection{Numerical examples: the fixed impact case}\label{sec: exemple num}

To illustrate the above numerical scheme, we place ourselves in the simpler case where $f\equiv \lambda>0$  and $\bar \gamma>0$ are constant. The dynamics of the stock is given by the Bachelier model
$$\,dX_t =\sigma \,dW_t,$$
with $\sigma:=0.2$. In the following, $T=2$.

First, we consider a European Butterfly option with three strikes $K_1=-1<K_2=0<K_3=1$, where $K_1+1/(2\bar \gamma) \le K_2 \le K_3-1/(2\bar \gamma)$.  Its pay-off is
$$g(x)=(x-K_1)^{+}-2(x-K_2)^{+}+(x-K_3)^{+},$$
and the corresponding face-lifted function $\hat{g}$ can be computed explicitly:
\be
\hat{g}(x)&=&\frac{\bar\gamma}{2} (x-x_1^{-})^{2}\1_{[x^{-}_1,x^{+}_1)}+(x-K_1)\1_{[x_1^{+}, K_2)} \nonumber\\
&& +(x-K_1-2(x-K_2))\1_{[K_2, x^{-}_2)}\nonumber \\
& &+\left(\frac{\bar\gamma}{2} (x-x^{+}_2)^{2}+2K_2-(K_1+K_3)\right)\1_{[x^{-}_2,x^{+}_2)} \nonumber \\
& &+(2K_2-(K_1+K_3))\1_{[x^{+}_2, +\infty)}, \nonumber
\ee
where $x^{\pm}_{1}=K_1\pm 1/(2\bar \gamma)$ and $x^{\pm}_{2}=K_3\pm 1/(2\bar \gamma)$.

In Figure \ref{fig:Butterfly}, we separately show the {effect} of the gamma constraint and of the market impact. As observed in Remark \ref{rem: pde non decreasing}, the price is  non-decreasing with respect to the impact parameter $\lambda$ and bounded from below by the hedging price obtained in the model without impact nor gamma constraint. On the left and right tails of the curves, we observe the effect of the gamma constraint. It does not operate around $x=0$ where the gamma is non-positive. The effect of the market impact operates only in areas of high convexity (around $x=-1.5$ and $x=1.5$) or of high concavity (around $x=0$).
\begin{figure}[H]
    \centering
    \begin{minipage}{0.5\textwidth}
        \centering
        \includegraphics[scale=0.4]{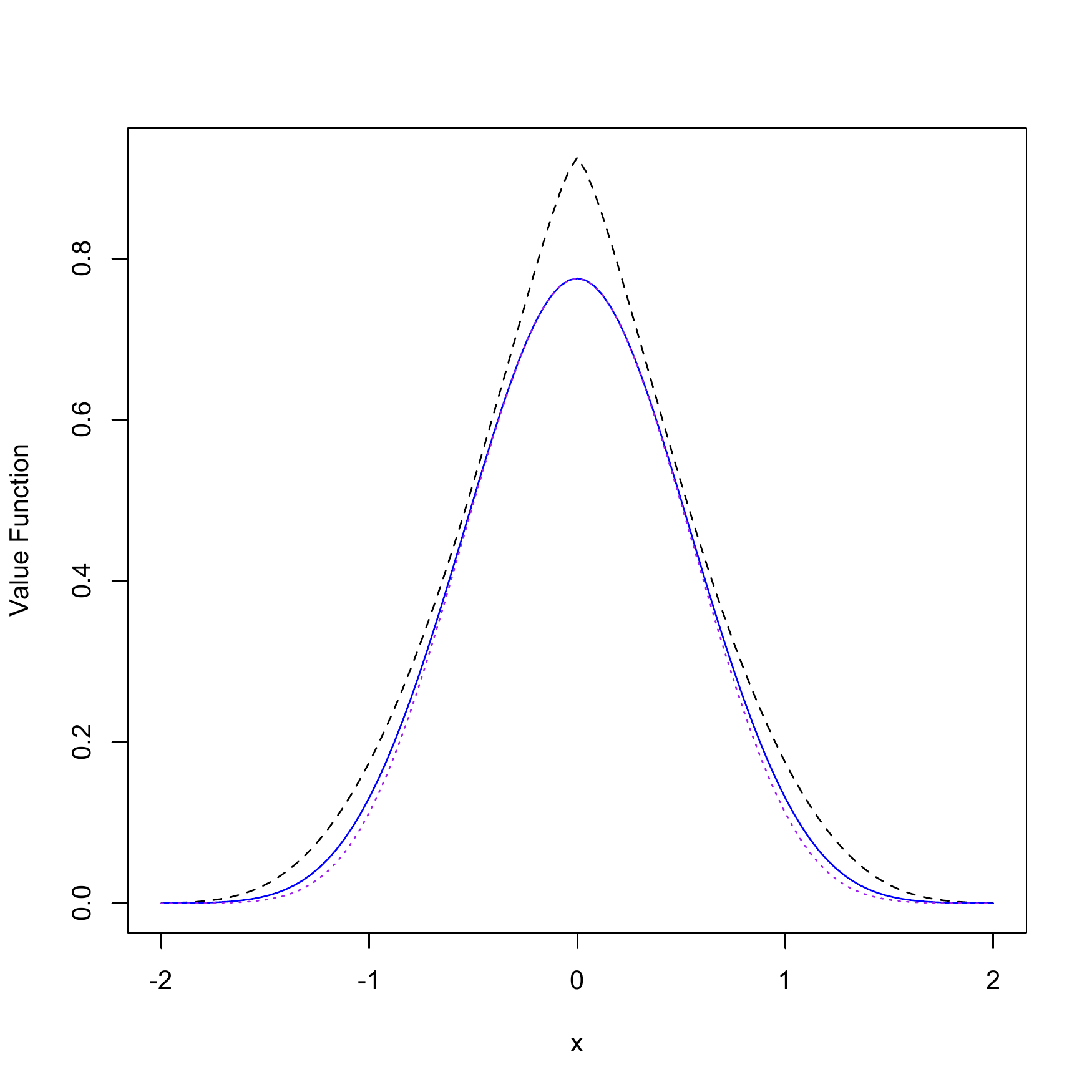}
    \end{minipage}%
    \begin{minipage}{0.5\textwidth}
        \centering
        \includegraphics[scale=0.4]{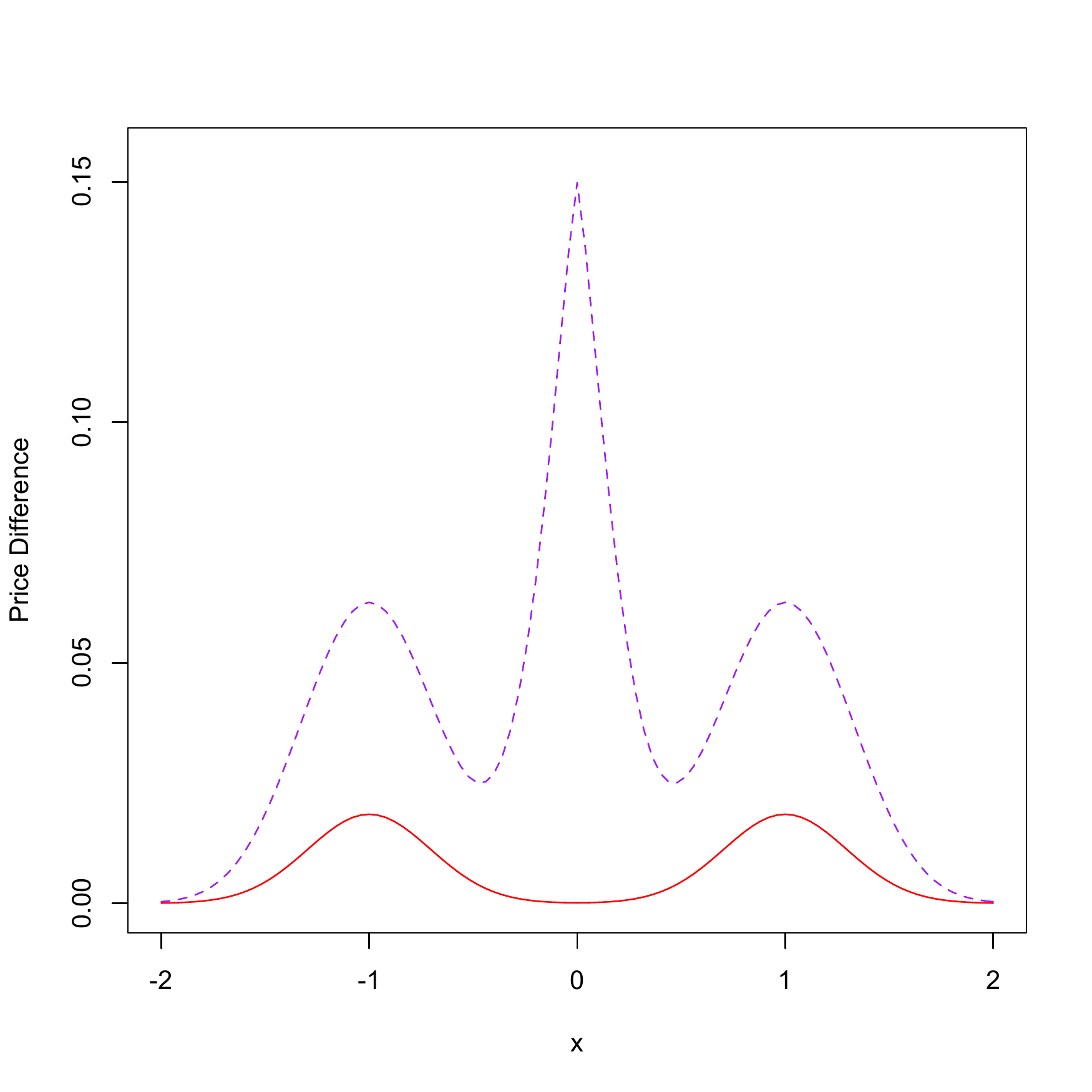}
    \end{minipage}
    \caption{Left: Super-hedging price of the Butterfly option. Dashed line: $\lambda=0.5$, $\bar \gamma=1.75$; solid line: $\lambda=0$, $\bar \gamma=1.75$; dotted line: $\lambda=0$, $\bar \gamma=+\infty$. Right: Difference with the price associated to $\lambda=0$, $\bar \gamma=+\infty$. Dashed line: $\lambda=0.5$, $\bar \gamma=1.75$; {solid} line: $\lambda=0$, $\bar \gamma=1.75$ .}
    \label{fig:Butterfly}
\end{figure}

In Figure \ref{fig:CallSpread}, we perform similar computations but for a call spread option, where 
$$g(x)=(x-K_1)^{+}-(x-K_2)^{+},$$
with $K_1=-1<K_2=1$ such that $K_1+1/(2\bar \gamma) \le K_2$. The face-lifted function $\hat{g}$ is given by
$$\hat{g}(x)=\frac{\bar\gamma}{2} (x-x^{-})^{2}\1_{[x^{-},x^{+})}+(x-K_1)\1_{[x^{+}, K_2)}+(K_2-K_1)\1_{[K_2, +\infty)}$$
with $x^{\pm}=K_1\pm1/(2\bar \gamma)$.
\begin{figure}[H]
    \centering
    \begin{minipage}{0.5\textwidth}
        \centering
        \includegraphics[scale=0.4]{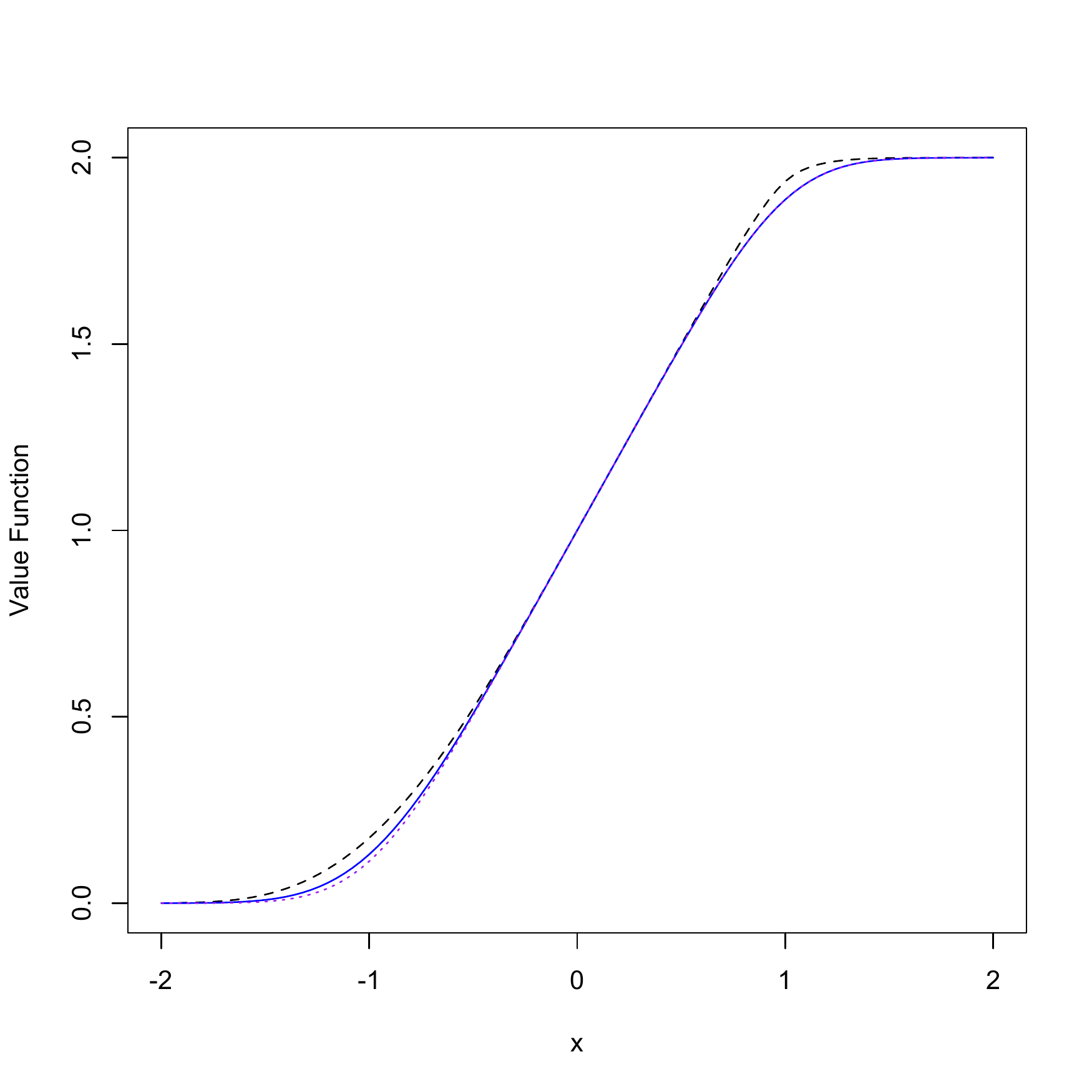}
    \end{minipage}%
    \begin{minipage}{0.5\textwidth}
        \centering
        \includegraphics[scale=0.4]{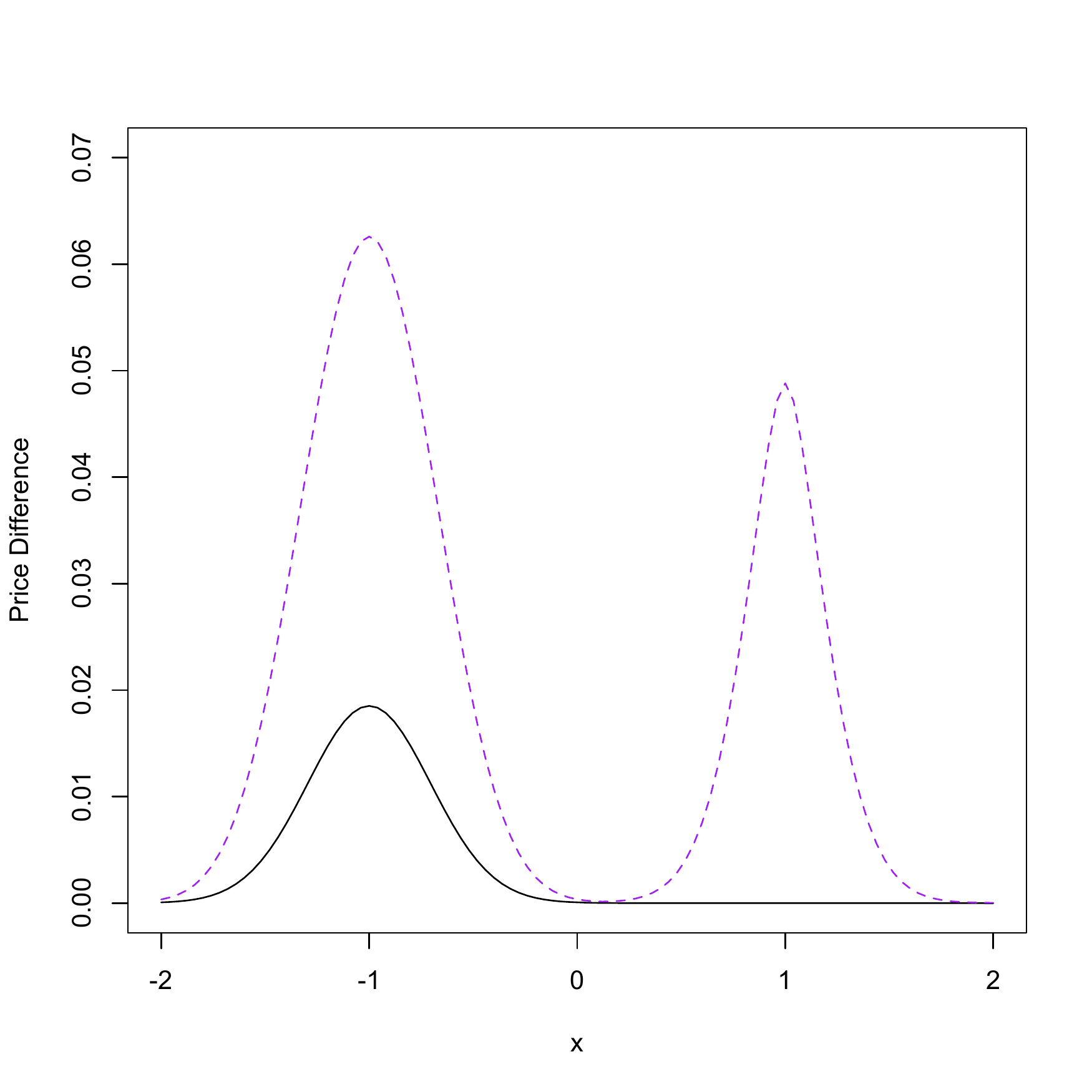}
    \end{minipage}
    \caption{Left: Super-hedging price of the Call Spread option. Dashed line: $\lambda=0.5$, $\bar \gamma=1.75$; solid line: $\lambda=0$, $\bar \gamma=1.75$; dotted line: $\lambda=0$, $\bar \gamma=+\infty$. Right: Difference with the price associated to $\lambda=0$, $\bar \gamma=+\infty$. Dashed line: $\lambda=0.5$, $\bar \gamma=1.75$; {solid} line: $\lambda=0$, $\bar \gamma=1.75$ .}
  \label{fig:CallSpread}
\end{figure}

%


\section{Appendix}

The following  is very standard, we prove it for completeness.

\begin{lemma}\label{lem: pertEquiv}
A upper-semicontinuous (resp.~lower-semicontinuous) map  is a viscosity subsolution (resp.~supersolution) of $$F_{\kappa}^{\epsilon}[\vp]\1_{[0,T)}+(\vp-\hat{g}_{K}^{\epsilon})\1_{\{T\}}=0$$
if and only if it is a viscosity subsolution (resp.~supersolution) of $F^{\epsilon,K}_{\kappa,-}[\vp]=0$ (resp. $F^{\epsilon,K}_{\kappa,+}[\vp]=0$).
\end{lemma}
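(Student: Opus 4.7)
The plan is to recognize the statement as the standard unpacking of viscosity sub/supersolutions for an equation with a discontinuous Hamiltonian, via lower and upper semicontinuous envelopes. I would write the equation $F_{\kappa}^{\epsilon}[\vp]\1_{[0,T)}+(\vp-\hat{g}_{K}^{\epsilon})\1_{\{T\}}=0$ in the single form $G=0$, where
$$
G(t,x,r,q,M):=F_{\kappa}^{\epsilon}(t,x,q,M)\1_{[0,T)}(t)+(r-\hat g_{K}^{\epsilon}(x))\1_{\{T\}}(t),
$$
which is discontinuous in $t$ at $t=T$. Following the standard convention recalled e.g.~in \cite[Section 7]{CrandallIshiiLions}, a viscosity subsolution of $G=0$ is a USC function whose test inequalities involve $G_{*}$, and a viscosity supersolution is a LSC function whose test inequalities involve $G^{*}$.

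At interior points $t_{0}\in[0,T)$ the map $G$ coincides locally with the continuous $F_{\kappa}^{\epsilon}$ (continuity being guaranteed by Remark \ref{rem: inversion deca kappa}), so $G_{*}=G^{*}=F_{\kappa}^{\epsilon}$ there, and both formulations give exactly the same inequality. The only point requiring a computation is $t_{0}=T$. By the continuity of $F_{\kappa}^{\epsilon}$ and of $\hat g_{K}^{\epsilon}$ (Lemma \ref{lem: construction hat gK} and \reff{eq: def hat g K eps}), separating the limit $(t',x',r',q',M')\to(T,x,r,q,M)$ according to whether $t'<T$ or $t'=T$ yields
\begin{align*}
G_{*}(T,x,r,q,M) &= \min\left\{F_{\kappa}^{\epsilon}(T,x,q,M),\; r-\hat g_{K}^{\epsilon}(x)\right\},\\
G^{*}(T,x,r,q,M) &= \max\left\{F_{\kappa}^{\epsilon}(T,x,q,M),\; r-\hat g_{K}^{\epsilon}(x)\right\}.
\end{align*}
These expressions are exactly $F^{\epsilon,K}_{\kappa,-}$ and $F^{\epsilon,K}_{\kappa,+}$ respectively.

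Combining the interior identity with the above boundary computation, the subsolution property of $G=0$ at every test point coincides with the subsolution property of $F^{\epsilon,K}_{\kappa,-}=0$, and similarly on the supersolution side with $F^{\epsilon,K}_{\kappa,+}=0$. This gives both implications of the equivalence. The only conceptual step is the choice of the envelope convention for the discontinuous Hamiltonian $G$; once that is fixed, the rest is a direct evaluation of upper and lower envelopes, with no technical obstacle.
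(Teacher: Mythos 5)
Your proposal misidentifies the content of the lemma, and the resulting argument proves nothing. You interpret ``viscosity subsolution of $F_{\kappa}^{\epsilon}[\vp]\1_{[0,T)}+(\vp-\hat{g}_{K}^{\epsilon})\1_{\{T\}}=0$'' via the envelope convention for the discontinuous Hamiltonian $G$, i.e.\ testing subsolutions against $G_{*}$ and supersolutions against $G^{*}$. Under that convention, as you correctly compute, $G_{*}=F^{\epsilon,K}_{\kappa,-}$ and $G^{*}=F^{\epsilon,K}_{\kappa,+}$, so the two notions are \emph{by definition} the same and there is nothing to prove. But that is not the notion the lemma is comparing against. The intended meaning of the original equation is the strong (pointwise) terminal condition: a subsolution satisfies $F_{\kappa}^{\epsilon}\le 0$ in the viscosity sense on $[0,T)\times\R$ \emph{and} $v(T,\cdot)\le \hat g_{K}^{\epsilon}$ pointwise, with the symmetric statement for supersolutions. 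The nontrivial direction of the lemma is exactly the classical ``boundary condition in the viscosity sense equals strong boundary condition'' result: one must show that a subsolution of $F^{\epsilon,K}_{\kappa,-}=0$ actually attains $v(T,\cdot)\le\hat g_{K}^{\epsilon}$ pointwise, even though the min-formulation a priori allows the interior operator to carry the sign. By treating this as a definitional identity via envelopes, you have assumed what was to be proved.

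Concretely, the missing argument is the test-function modification that forces the interior term out of the $\min$/$\max$. For the supersolution side one replaces a test function $\vp$ touching from below at $(T,x_{0})$ by $\phi=\vp-C(T-t)$ with $C$ large, so that $-\partial_{t}\phi=-\partial_{t}\vp-C$ makes $F_{\kappa}^{\epsilon}[\phi](T,x_{0})<0$; the supersolution inequality $\max\{F_{\kappa}^{\epsilon}[\phi],v-\hat g_{K}^{\epsilon}\}\ge 0$ then yields $v(T,x_{0})\ge\hat g_{K}^{\epsilon}(x_{0})$. The subsolution side is slightly more delicate because the interior operator involves a $\min$ that also contains the gamma-constraint term $\bar\gamma-\partial_{xx}$: one must first shrink the spatial variable ($x\mapsto x_{0}+\alpha(x-x_{0})$, $\alpha$ small) to ensure $\bar\gamma-\partial_{xx}\phi>0$, and only then add $C(T-t)$ to make the diffusive term strictly positive, so that $F_{\kappa}^{\epsilon}[\phi](T,x_{0})>0$ and the subsolution inequality $\min\{F_{\kappa}^{\epsilon}[\phi],v-\hat g_{K}^{\epsilon}\}\le 0$ forces $v(T,x_{0})\le\hat g_{K}^{\epsilon}(x_{0})$. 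None of this appears in your proposal, and without it the lemma is not established.
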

\proof
The equivalence on $[0,T)$ is evident, we only consider the parabolic boundary $\{T\}\x \R$. Since $F^{\epsilon,K}_{\kappa,+}\ge F_{\kappa}^{\epsilon}$ and $F^{\epsilon,K}_{\kappa,-}\le F_{\kappa}^{\epsilon}$, only one implication is not completely trivial.

\noindent \textbf{a.}
Let $v$ be a viscosity supersolution of $F^{\epsilon,K}_{\kappa,+}[\vp]=0$, and $\vp \in C^{2}$ be a test function such that
$$\text{(strict)}\min_{[0,T]\times\mathbb{R}}(v-\vp)=(v-\vp)(T,x_0)=0,$$
for some $x_{0}\in \R$.
We define a new test function $\phi \in C^{2}$,
$$\phi(t,x) := \vp(t,x)-C(T-t),$$
so that $\partial_{t}\phi=\partial_{t}\vp+C$. For $C>0$ large enough,
$$\min_{x'\in D^{\epsilon}_{\kappa}}\min\left\{-\partial_{t}\phi-\frac{\sigma^{2}(x')\partial_{xx}\phi}{2(1-f(x')\partial_{xx}\phi)}, \bar \gamma(x')-\partial_{xx}\phi\right\} < 0$$
at $(T,x_{0})$. Since,
$$\text{(strict)}\min_{[0,T]\times\mathbb{R}}(v-\phi)=(v-\phi)(T,x_0)=0,$$
it must hold that  $F^{\epsilon,K}_{\kappa,+}[\phi](T,x_0) \ge 0$, and therefore
$$v(T,x_0)-\hat{g}_{K}^{\epsilon}(x_0)=\vp(T,x_0)-\hat{g}_{K}^{\epsilon}(x_0)=\phi(T,x_0)-\hat{g}_{K}^{\epsilon}(x_0) \ge 0.$$

\noindent \textbf{b.}
Let now $v$ be a viscosity subsolution of $F^{\epsilon,K}_{\kappa,-}[\vp]=0$, and $\vp \in C^{2}$ be a test function such that
$$\text{(strict)}\max_{[0,T]\times \mathbb{R}}(v-\vp)=(u-\vp)(T,x_0),$$
for some $x_{0}\in \R$.
Then,  $F^{\epsilon,K}_{\kappa,-}[\vp](T,x_0) \leq 0$. By replacing $\vp$ by ${\phi}$, defined for $\alpha>0$ as
$${\phi}(t,x):=\vp(t,x_0+\alpha (x-x_0))+C(T-t),$$
we obtain a new test function  at $(T, x_0)$. Since $\inf \bar \gamma>0$, recall \reff{eq: H1},  we can take  $\alpha$ small enough so that
$$\min\limits_{x'\in D^{\epsilon}_{\kappa}}\{\bar \gamma(x')-\partial_{xx}{\phi}(T,x_{0})\} > 0.$$
As in the previous step, we can now choose $C>0$ such that
$$\min_{x'\in D^{\epsilon}_{\kappa}}\left\{-\partial_{t}{\phi}-\frac{\sigma^{2}(x')\partial_{xx}{\phi}}{2(1-f(x')\partial_{xx}{\phi})}\right\} > 0
$$
at $(T,x_{0})$.
Since $F^{\epsilon,K}_{\kappa,-}[{\phi}](T,x_{0}) \leq 0$, we conclude that $v(T,x_{0})={\phi}(T,x_{0}) \leq \hat{g}_{K}^{\epsilon}(x_{0})$.
\ep


\nocite{*}
\bibliographystyle{plain}

\end{document}